\newcommand{\CC}{\mathbb{C}}
\newcommand{\FF}{\mathbb{F}}
\newcommand{\NN}{\mathbb{N}}
\newcommand{\Aa}{\mathcal{A}}
\newcommand{\cg}{\mathfrak{g}}
\newcommand{\fS}{\mathfrak{S}}
\newcommand{\fU}{\mathfrak{U}}
\newcommand{\Kk}{\mathcal{K}}
\newcommand{\Nn}{\mathcal{N}}
\newcommand{\Uu}{\mathcal{U}}
\newcommand{\V}{\mathcal{V}}
\newcommand{\ZZ}{\mathbb{Z}}
\newcommand{\E}{\sf{E}}
\newcommand{\F}{\sf{F}}
\newcommand{\TTt}{\sf{T}}
\newcommand{\SSs}{\sf{S}}
\newcommand{\Hhh}{\sf{H}}
\newcommand{\spi}{\sf{\Psi}}
\newcommand{\GL}{\mathfrak{gl}}
\newcommand{\SL}{\mathfrak{sl}}
\newcommand{\kk}{\underline{k}}
\newcommand{\Ll}{\underline{l}}
\newcommand{\bo}{\boldsymbol{1}}
\newcommand{\blam}{\boldsymbol{\lambda}}
\newcommand{\tdim}{\mathrm{dim}}
\newcommand{\Hom}{\mathrm{Hom}}
\newcommand{\Sym}{\mathrm{Sym}}
\newcommand{\Rep}{\mathrm{Rep}}
\newcommand{\Mat}{\mathrm{Mat}}
\newcommand{\Rm}{\mathrm}
\newcommand{\GLL}{\mathrm{GL}}
\newcommand{\Fl}{\mathrm{Fl}}
\newcommand{\Tr}{\mathrm{Tr}}
\newcommand{\MF}{\mathrm{MF}}
\newcommand{\Span}{\mathrm{Span}}
\newcommand{\Coh}{\mathrm{Coh}}
\newcommand{\Inv}{\mathrm{Inv}}
\newcommand{\gr}{\mathrm{gr}}
\newcommand{\D}{\mathrm{D}}
\newcommand{\HA}{\mathrm{HA}}
\newcommand{\CoHA}{\mathrm{CoHA}}
\newcommand{\KHA}{\mathrm{KHA}}
\newcommand{\aff}{\mathrm{aff}}
\newcommand{\brmU} {\boldsymbol{\mathrm{U}}}
\newcommand{\brmS} {\boldsymbol{\mathrm{S}}}
\newcommand{\brmT} {\boldsymbol{\mathrm{T}}}
\newtheorem{theorem}{Theorem}[section]
\newtheorem{thmx}{Theorem}
\newtheorem{lemma}[theorem]{Lemma}
\newtheorem{proposition}[theorem]{Proposition}
\newtheorem{corollary}[theorem]{Corollary}
\theoremstyle{definition}
\newtheorem{definition}[theorem]{Definition}
\newtheorem{example}[theorem]{Example}
\theoremstyle{remark}
\newtheorem{remark}[theorem]{Remark}
\newtheorem{conjecture}[theorem]{Conjecture}
\numberwithin{equation}{section}
\title[$0$-Affine Quantum Group as K-Theoretic Hall Algebra]{$0$-Affine Quantum Group as K-Theoretic Hall Algebra}
\begin{document}
	
\emergencystretch 3em


\address{Academia Sinica} \email{youhunghsu@gate.sinica.edu.tw}
	
\author[You-Hung Hsu]{You-Hung Hsu}

\maketitle

\begin{abstract}
In this note, we show that the positive part of Arkhipov–Mazin’s 0-affine quantum
group can be realized as the K-theoretic Hall algebra of the type $A$ Dynkin quiver. We then construct a categorical action of this positive part and demonstrate that such an action induces semiorthogonal decompositions on the corresponding weight categories. As a main example, we study the bounded derived category of coherent sheaves on $n$-step partial flag varieties.
\end{abstract}

\section{Introduction}

\subsection{Hall algebras and quantum groups}

Let $\Aa$ be a small abelian category satisfying certain finiteness conditions. The Hall algebra of $\Aa$, denoted by $\Hhh_{\Aa}$, is defined as the $\CC$-vector space with basis given by isomorphism classes of objects in $\Aa$, with multiplication determined by extension data.

A fundamental example is the category of quiver representations over a finite field, denoted by $\Rep_{\FF_{q}}Q$. When $Q$ is a Dynkin quiver (i.e., with underlying graph of type $ADE$ and any orientation), Ringel \cite{Rin} proved that there is an algebra isomorphism 
\begin{equation} \label{finiterealization}
{\Hhh}_{\Rep_{\FF_{q}}Q} \cong \brmU^+_{q}(\cg_{Q})\footnote{For a precise statement, see \cite[Theorem 3.16]{S}}
\end{equation} between the Hall algebra and the positive part of the quantum group, where $\cg_{Q}$ is the simple Lie algebra attached to the quiver $Q$. This realization plays a crucial role in the theory of canonical bases for quantum groups, developed by Kashiwara \cite{K} and Lusztig \cite{L1}, \cite{L2}. For an extensive survey of Hall algebras, see Schiffmann's lecture notes \cite{S}.

\subsubsection{The cohomological/K-theoretic version}

Rather than working with the abelian category of quiver representations directly, one may study the moduli stack of representations of a quiver $Q$. Kontsevich-Soibelman \cite{KS} defined an associative algebra structure on the (Betti) cohomology of this stack; the resulting algebra is called the \textit{cohomological Hall algebra} of $Q$, denoted by $\CoHA_{Q}$. This construction was motivated by the study of BPS state algebras in string theory and their relation to Donaldson–Thomas invariants. Furthermore, they also propose a categorification of $\CoHA$ via using the category of matrix factorizations.

Building on this perspective, Padurariu \cite{T1} introduced the notion of \textit{categorical Hall algebras}, denoted by $\HA_{Q}$, defined as a category of singularities equipped with a monoidal structure. By taking the associated Grothendieck group, he subsequently defined the \textit{K-theoretic Hall algebras}, denoted by $\KHA_{Q}$. In later work, he established several key properties of both $\HA_{Q}$ and $\KHA_{Q}$, including wall-crossing phenomena, a Hopf algebra structure, semiorthogonal decompositions, and a PBW theorem; see \cite{T2, T3} for details.



\begin{remark}
In fact, the constructions in both \cite{KS} and \cite{T1} are more general, where they consider quivers with potential $(Q,W)$.
\end{remark}

\subsubsection{The preprojective algebras and their realization}

Important generalizations of the quantum groups $\brmU_{q}(\cg)$ of simple Lie algebras $\cg$ include Yangians $Y_{\hbar}(\cg)$ and quantum affine algebras $\brmU_{q}(\widehat{\cg})$. Similar to the realization (\ref{finiterealization}), it is natural to ask whether one can realize the positive part of these generalizations as certain Hall algebras.

Nakajima \cite{Nak} constructed actions of quantum affine algebras on the equivariant $K$-theory of quiver varieties, while Varagnolo \cite{V} constructed actions of Yangians on their equivariant homology. These results suggest that the representation theory of quantum affine algebras and Yangians should be accessible through Hall-type constructions. In particular, it is natural to expect that quantum affine algebras are realized via $\KHA$, while Yangians are realized via $\CoHA$. Moreover, there are generalizations of Yangians and quantum affine algebras associated to an arbitrary quiver $Q$. Maulik-Okounkov \cite{MO} defined a Yangian $Y_{\hbar}(\cg_{Q})^{\text{MO}}$, and Okounkov-Smirnov \cite{OS} constructed a quantum affine algebra $\brmU_q(\widehat{\cg_Q})^{\text{OS}}$; both play important roles in the theory of enumerative geometry.

To formulate Hall algebra realizations of the above generalizations of quantum groups, one replaces the moduli stack of representations of a quiver $Q$ by its cotangent stack. It is well-known that this cotangent stack is isomorphic to the moduli stack of representations of the preprojective algebra $\Pi_{Q}$. 

Incorporating the natural $\CC^*$-action, one obtains an associative algebra structure on the $\CC^*$-equivariant cohomology of the moduli stack of $\Pi_{Q}$-representations. The resulting algebra, called the \textit{preprojective cohomological Hall algebra} (denoted by $\CoHA^{\CC^*}_{\Pi_{Q}}$), has long been conjectured to provide an algebraic realizations of the positive part of $Y_{\hbar}(\cg_{Q})^{\text{MO}}$. There has been substantial progress toward this conjecture; see Davison \cite{D1,D2}, Schiffmann–Vasserot \cite{SV1}, and Yang–Zhao \cite{YZ}. Moreover, the conjecture was proved recently, which is the following.

\begin{theorem}[Bota–Davison \cite{BD} and Schiffmann–Vasserot \cite{SV2}]
For a quiver $Q$ of symmetric Kac–Moody type, there are isomorphisms of algebras
\begin{equation*}
\CoHA^{\CC^*}_{\Pi_{Q}} \cong Y^+_{\hbar}(\cg_Q)^{\mathrm{MO}}.
\end{equation*}
\end{theorem}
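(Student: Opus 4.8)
The plan is to prove the isomorphism by comparing the two algebras through compatible PBW-type decompositions and then matching the underlying graded Lie algebras. Both $\CoHA^{\CC^*}_{\Pi_{Q}}$ and $Y^+_{\hbar}(\cg_Q)^{\mathrm{MO}}$ are graded by the monoid $\NN^{Q_0}$ of dimension vectors together with an internal cohomological grading over $H^*_{\CC^*}(\mathrm{pt})$, and any isomorphism must respect these gradings. So the first step is to fix these gradings, observe that both sides are generated in the ``fundamental'' degrees, and reduce the statement to (i) an identification of a generating graded Lie subalgebra on each side and (ii) a matching of graded dimensions piece by piece.

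On the Hall-algebra side the central input is the \emph{cohomological integrality} theorem for the preprojective algebra (Davison \cite{D1,D2}, Schiffmann--Vasserot \cite{SV1}). This produces a BPS Lie algebra $\cg^{\mathrm{BPS}}$ together with a PBW isomorphism exhibiting $\CoHA^{\CC^*}_{\Pi_{Q}}$ as a deformed enveloping algebra over $\cg^{\mathrm{BPS}} \otimes H^*_{\CC^*}(\mathrm{pt})$. The graded multiplicities of $\cg^{\mathrm{BPS}}$ are governed by the refined BPS invariants of $\Pi_Q$, which in this case are computed by the Kac polynomials of $Q$. Thus the first half of the argument is purely CoHA-theoretic: establish the PBW decomposition and record that the graded dimensions of $\cg^{\mathrm{BPS}}$ equal the Kac polynomial coefficients.

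On the Yangian side one uses Maulik--Okounkov's construction \cite{MO} of $Y_{\hbar}(\cg_Q)^{\mathrm{MO}}$ through stable envelopes and geometric $R$-matrices on Nakajima quiver varieties. Here $\cg_Q$ is itself \emph{defined} as the Maulik--Okounkov Lie algebra extracted from the $R$-matrix, the Yangian carries a triangular decomposition whose positive part $Y^+$ is generated by the positive half of $\cg_Q$, and the root multiplicities of $\cg_Q$ are again Kac polynomials. The comparison is then effected by constructing an explicit algebra homomorphism $\Phi : Y^+_{\hbar}(\cg_Q)^{\mathrm{MO}} \to \CoHA^{\CC^*}_{\Pi_{Q}}$ using the common action of both algebras on the equivariant cohomology of quiver varieties: one realizes the CoHA product via Hecke correspondences, matches it with the $R$-matrix coproduct, and verifies that $\Phi$ intertwines the two coproducts and is an isomorphism on generators.

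The main obstacle is the identification of the two Lie algebras $\cg^{\mathrm{BPS}}$ and $\cg_Q^{\mathrm{MO}}$, not merely as graded vector spaces but as Lie algebras. The graded-dimension match follows from the fact that both sets of multiplicities are given by Kac polynomials, but this does not by itself yield an algebra map; one must show the two Lie brackets agree. The delicate point is to control the leading equivariant term of the geometric $R$-matrix and to compare it, via localization on the quiver varieties, with the commutator coming from the CoHA multiplication. The imaginary-root directions, where multiplicities exceed one, are exactly where this comparison is most subtle and where the full strength of cohomological integrality is needed to organize the computation; this is the heart of both \cite{BD} and \cite{SV2}. Once the Lie algebras are matched, the PBW theorems on the two sides upgrade the identification to the desired algebra isomorphism.
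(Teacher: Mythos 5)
The paper does not prove this statement. It is quoted in the introduction as a known theorem, attributed to Botta--Davison \cite{BD} and Schiffmann--Vasserot \cite{SV2}, and is used purely as motivating background for the Hall-algebra realizations discussed there; no argument for it appears anywhere in the text. So there is no internal proof to compare your attempt against.

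Taken on its own terms, your writeup is a reasonable high-level roadmap of the strategy actually used in the cited works: cohomological integrality and the BPS Lie algebra with Kac-polynomial multiplicities on the CoHA side, stable envelopes and the $R$-matrix Lie algebra with the same multiplicities on the Maulik--Okounkov side, and a PBW theorem on each side reducing the problem to an identification of the two Lie algebras. But it is an outline, not a proof. The essential content --- showing that the two Lie \emph{brackets} agree, in particular in the imaginary-root directions where multiplicities exceed one, by controlling the leading equivariant term of the geometric $R$-matrix against the CoHA commutator --- is exactly the step you explicitly defer, writing that it ``is the heart of both \cite{BD} and \cite{SV2}.'' Acknowledging where the difficulty lives is not the same as resolving it, so as a standalone proof this does not go through. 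In the context of the present paper, the author's choice to simply cite \cite{BD} and \cite{SV2} is the appropriate one, and your summary would at most serve as an expanded remark explaining what those references do.
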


Analogously, one can equip the $\CC^*$-equivariant K-theory of the moduli stack of $\Pi_{Q}$-representations with an associative multiplication, forming the \textit{preprojective K-theoretic Hall algebra} (denoted by $\KHA^{\CC^*}_{\Pi_{Q}}$). It has likewise been conjectured that the preprojective $\KHA$ realize the positive part of the Okounkov–Smirnov quantum affine algebra $\brmU_q(\widehat{\cg_Q})^{\text{OS}}$.

\begin{conjecture} (\cite[Conjecture 1.2]{T1})
For a quiver $Q$ of symmetric Kac–Moody type, there are isomorphisms of algebras
\begin{equation*}
\KHA^{\CC^*}_{\Pi_Q} \cong \brmU^+_{q}(\widehat{\cg_{Q}})^{\text{OS}}.
\end{equation*}
\end{conjecture}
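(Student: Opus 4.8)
The plan is to transport, step by step, the strategy that succeeded in the cohomological setting --- the theorem of Bota--Davison and Schiffmann--Vasserot quoted above --- from Borel--Moore homology to $\CC^{*}$-equivariant $K$-theory. The natural entry point is a dimensional-reduction identification: since the moduli stack of $\Pi_{Q}$-representations is the cotangent stack of the stack of $Q$-representations, one first realizes $\KHA^{\CC^*}_{\Pi_{Q}}$ as the critical (matrix-factorization) $\KHA$ of the tripled quiver equipped with its canonical cubic potential, and then invokes a $K$-theoretic dimensional-reduction theorem to descend to the preprojective stack. This localizes the problem to the $\CC^{*}$-equivariant $K$-theory of the stacks $\mathcal{M}_{d}$ of representations of fixed dimension vector $d$, together with the Hall correspondences recording short exact sequences.

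The next step is to obtain matching shuffle-algebra presentations of the two sides. On the Hall side, pulling back along the correspondence diagram and pushing forward turns the product into an explicit shuffle product on a direct sum of symmetric Laurent-polynomial rings, the shuffle kernel recording the $K$-theoretic Euler classes of the relevant tangent and normal bundles. On the quantum side, $\brmU^{+}_{q}(\widehat{\cg_{Q}})^{\text{OS}}$ is built by Okounkov--Smirnov from $K$-theoretic stable envelopes and their geometric $R$-matrices on Nakajima quiver varieties, and the resulting factorization of the coproduct yields generators and shuffle relations of the same combinatorial type. I would then send the generators of $\KHA^{\CC^*}_{\Pi_{Q}}$ indexed by the simple objects --- dimension vector a simple root supported at a single vertex, with the equivariant variable supplying the loop parameter --- to the Drinfeld generators $x^{+}_{i,k}$, checking the quantum Serre and Drinfeld relations through explicit rank-one and rank-two computations on $\mathcal{M}_{d}$ for $d$ of the form $2\alpha_{i}$ and $\alpha_{i}+\alpha_{j}$. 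To promote this generator matching to an isomorphism, I would establish a $K$-theoretic PBW/integrality theorem for $\KHA^{\CC^*}_{\Pi_{Q}}$, the analogue of the PBW theorem already available for the ordinary $\KHA_{Q}$, producing a basis indexed by positive affine roots with the correct multiplicities; comparing the resulting graded characters against the PBW basis of $\brmU^{+}_{q}(\widehat{\cg_{Q}})^{\text{OS}}$ then forces bijectivity in each graded piece.

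The type $A$ Dynkin case is the natural tractable first instance of this scheme: there the indecomposable $\Pi_{Q}$-representations are governed by explicit interval combinatorics, the stacks $\mathcal{M}_{d}$ are manageable, and the relations take a form one can verify directly --- indeed the $0$-affine quantum group of Arkhipov--Mazin and the ordinary $\KHA_{Q}$ studied in this note should appear as the degeneration of the two sides under specialization of the equivariant parameter, and so serve as a sanity check for the full preprojective statement.

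The hardest part will be twofold. First, $K$-theoretic dimensional reduction and the accompanying integrality theory --- the analogue of Davison's BPS Lie algebra --- are far less developed than their cohomological counterparts, so the clean PBW basis needed for the character count is not yet available in the required generality. Second, and more seriously, the Okounkov--Smirnov algebra is defined intrinsically through stable envelopes and $R$-matrices rather than by an explicit presentation, so matching the correspondence-based Hall product with the geometric coproduct amounts to proving a deep compatibility between two a priori unrelated geometric structures; controlling this compatibility uniformly, rather than case by case in low rank, is the crux of the conjecture.
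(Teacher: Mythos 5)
The statement you are trying to prove is an open conjecture (Padurariu's Conjecture 1.2 in \cite{T1}), and the paper offers no proof of it: it is quoted purely as background, with the remark that only ``significant progress'' exists in the literature. So there is no paper proof to compare against, and your text should be judged as a research program rather than as an argument. Read that way, it is a reasonable and well-informed sketch --- dimensional reduction from the tripled quiver with potential, shuffle presentations on both sides, generator matching in rank one and two, and a graded-character comparison via a PBW basis is indeed the strategy one would expect, modelled on the Botta--Davison and Schiffmann--Vasserot proof of the cohomological analogue.

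The genuine gap is the one you yourself identify in your final paragraph, and it is fatal to calling this a proof: the two ingredients your argument needs --- a $K$-theoretic integrality/PBW theorem for $\KHA^{\CC^*}_{\Pi_Q}$ sharp enough to count graded dimensions against positive affine roots, and a verified compatibility between the correspondence-defined Hall product and the stable-envelope/$R$-matrix definition of $\brmU^+_q(\widehat{\cg_Q})^{\text{OS}}$ --- are precisely the open problems that make this a conjecture. A proof outline whose key lemmas are the unproven content of the statement is not a proof. One smaller caution: the present paper's Theorem A concerns the ordinary (``one-dimensional'') $\KHA_{A_{n-1}}$ of the Dynkin quiver itself, not the preprojective $\KHA^{\CC^*}_{\Pi_Q}$, and the $0$-affine quantum group of Arkhipov--Mazin is not literally a specialization of the Okounkov--Smirnov algebra in any established sense; so using the results of this note as a ``sanity check'' or degeneration of the conjecture would itself require an argument that does not currently exist.
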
 

Significant progress toward this conjecture has been made; see Pădurariu \cite{T1,T2} and Varagnolo–Vasserot \cite{VV} for details.

\subsection{Main results}

Arkhipov-Mazin \cite{AM} defined an algebra called the \textit{0-affine quantum group}, denoted by $\fU_n$, which can be thought of as a ``$q=0$" degeneration of the quantum affine algebra $\brmU_{q}(\widehat{\GL_n})$. To state their main result, let $\Fl_{n,N}$ be the variety of all $n$-step partial flags in $\CC^N$. There is a natural $\GLL_N(\CC)$ action on it.

They showed that the $\GLL_{N}(\CC)$-equivariant $K$-theory of $\Fl_{n,N} \times \Fl_{n,N}$ carries a convolution algebra structure, which they call the \textit{affine 0-Schur algebra}, denoted by $\brmS^{\aff}_0(n,N)$. The main theorem of \cite{AM} asserts that there exists a surjective algebra homomorphism
\begin{equation*}
    \fU_n \twoheadrightarrow \brmS^{\aff}_0(n,N)\coloneqq K^{\GLL_{N}(\CC)}(\Fl_{n,N} \times \Fl_{n,N})
\end{equation*} for each $N \geq 1$. As a consequence, the 0-affine quantum group $\fU_n$ acts on the equivariant $K$-theory of partial flag varieties.

A natural question, in analogy with Ringel’s theorem and the preprojective Hall algebra constructions discussed above, is whether the positive part of Arkhipov–Mazin’s 0-affine quantum group, denoted by $\fU^+_n$, can be realized in terms of a K-theoretic Hall algebra. 

In this note, we show that $\fU^+_n$ can be realized as the K-theoretic Hall algebra of type $A$ Dynkin quiver. More precisely, we have the following result.


\begin{thmx} [Theorem \ref{main result 1}] \label{Thm1}
Let $\KHA_{A_{n-1}}$ denote the K-theoretic Hall algebra of type $A_{n-1}$ Dynkin quiver. Then, there is an isomorphism of algebras $\phi_n:\fU_{n}^+ \xrightarrow{\simeq} \KHA_{A_{n-1}}$.   
\end{thmx}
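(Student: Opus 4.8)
The plan is to exploit the Drinfeld (loop) presentation of $\fU_n^+$ and to compute $\KHA_{A_{n-1}}$ explicitly as a shuffle algebra, then match the two through their natural generators. Recall that for a quiver without potential the moduli stacks $\mathfrak{M}_{\ud} = [\Rep(\ud)/\GLL_{\ud}]$ are smooth, so Pădurariu's categorical Hall algebra reduces to the derived categories of coherent sheaves on the $\mathfrak{M}_{\ud}$, and
$$\KHA_{A_{n-1}} = \bigoplus_{\ud \in \NN^{n-1}} K(\mathfrak{M}_{\ud}) = \bigoplus_{\ud} R(\GLL_{\ud}),$$
where each $R(\GLL_{\ud}) = \bigotimes_{i} \ZZ[x_{i,1}^{\pm}, \dots, x_{i,d_i}^{\pm}]^{S_{d_i}}$ is a ring of symmetric Laurent polynomials. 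The multiplication is the Hall product $m = \pi_* \circ p^*$ attached to the correspondence
$$\mathfrak{M}_{\ud_1} \times \mathfrak{M}_{\ud_2} \xleftarrow{\;p\;} \mathfrak{M}_{\ud_1, \ud_2} \xrightarrow{\;\pi\;} \mathfrak{M}_{\ud_1 + \ud_2},$$
in which $\mathfrak{M}_{\ud_1, \ud_2}$ parametrizes representations carrying a subrepresentation of dimension $\ud_1$, the map $p$ records the associated graded, and the proper map $\pi$ forgets the filtration. A localization computation identifies $m$ with an explicit shuffle product whose kernel is governed by the Euler form of $A_{n-1}$, i.e. by the arrows $i \to i+1$.

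On the algebraic side I would use the loop presentation: $\fU_n^+$ is generated by elements $E_{i,r}$ with $i \in \{1, \dots, n-1\}$ and $r \in \ZZ$, subject to the $q=0$ degenerations of the quadratic Drinfeld relations and the $q$-Serre relations. The crucial coincidence is that the vertex set $\{1, \dots, n-1\}$ of the finite $A_{n-1}$ quiver is exactly the index set of the loop generators, while the loop parameter $r$ matches the twist by the $r$-th power of the tautological character on $\mathfrak{M}_{e_i} = B\CC^*$, whose K-theory is $K(B\CC^*) = \ZZ[x^{\pm}]$. I would therefore set $\phi_n(E_{i,r}) = x^r \in K(\mathfrak{M}_{e_i})$, viewed inside the vertex-$i$ summand.

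The main work is to show that $\phi_n$ is a well-defined algebra homomorphism, i.e. that the images obey the defining relations of $\fU_n^+$. This reduces to computing the Hall products $\phi_n(E_{i,r}) \cdot \phi_n(E_{j,s})$ in the three cases $|i-j| \geq 2$, $|i-j| = 1$, and $i = j$. When $|i-j| \geq 2$ there are no arrows between the vertices, the correspondence is a product, and the classes commute, matching $[E_{i,r}, E_{j,s}] = 0$. For $i=j$ and for adjacent $i,j$ the relative tangent complex of $\pi$ contributes the factors coming from the arrow $i \to i+1$, and a generating-function manipulation should identify the resulting shuffle relations with the degenerate Drinfeld relations, the $q$-Serre relations being a formal consequence of the antisymmetrization built into the $i=i$ shuffle. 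I expect this relation-matching, and in particular the careful bookkeeping of the twist/shift in the K-theoretic Hall product that produces the correct (degenerate) parameter, to be the principal obstacle.

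Finally, for bijectivity I would argue as follows. Surjectivity follows once one knows that $\KHA_{A_{n-1}}$ is generated over its structure ring by the twisted simple classes $\{x^r \cdot [\OO_{\mathfrak{M}_{e_i}}]\}$, which is part of the structure theory of categorical Hall algebras of Dynkin quivers. For injectivity I would compare PBW bases: Pădurariu's PBW theorem supplies a basis of $\KHA_{A_{n-1}}$ indexed by the positive roots of $\widehat{A}_{n-1}$ decorated by integers, while the corresponding degenerate PBW basis of $\fU_n^+$ is indexed by the same data; since $\phi_n$ is a graded surjection carrying one spanning family onto the other, comparing the two bases in each dimension-vector degree yields injectivity, provided one first checks that the $q=0$ degeneration does not shrink $\fU_n^+$ below the expected PBW size. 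Combining these steps gives the isomorphism.
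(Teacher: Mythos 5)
Your construction of the map and the verification that it is a well-defined surjective homomorphism follow essentially the same route as the paper: realize $\KHA_{A_{n-1}}$ as a shuffle algebra via Proposition \ref{Shuffle}, send the loop generators to degree-one classes $x_{i,1}^{\pm r}$ on the vertex summands, check the three cases $i=j$, $|i-j|=1$, $|i-j|\geq 2$ by direct shuffle computation, and deduce surjectivity from the fact that the vertex subalgebras generate (Lemma \ref{A1generatedegreeone} and Proposition \ref{PBWKHAAn}). One small correction there: the presentation of $\fU_n^+$ in Definition \ref{positivepart} involves only the quadratic relations (\ref{er}); there are no cubic Serre relations to verify.

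The injectivity step, however, has a genuine gap. First, you propose to ``compare the two bases in each dimension-vector degree,'' but the bi-graded pieces of both algebras are infinite-dimensional: already $(\fU_2^+)_{2,0}$ is spanned by $e_{r}e_{-r}$ for all $r\in\ZZ$, and likewise $K(\Rep_2 Q(A_1))_0$ contains $\Sym(x_1^{r}x_2^{-r})$ for all $r$. So a degree-by-degree dimension count cannot be run on the full algebras; the paper's resolution is to restrict to the sectors $\fU^{+,\geq 0}_{n}$ and $\KHA^{\leq 0}_{A_{n-1}}$ (Definitions \ref{negativesectorKHA}, \ref{positivesectorU+n}), where the bi-graded pieces become finite-dimensional and are counted by partitions, and then to propagate injectivity to the whole algebra via the degree-shift automorphisms $\tau_k$, $\eta_k$. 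Second, the clause ``provided one first checks that the $q=0$ degeneration does not shrink $\fU_n^+$ below the expected PBW size'' is precisely the hard content, and you give no mechanism for it; the paper supplies one by exhibiting an explicit spanning set of normal-form monomials (weakly decreasing loop indices, obtained from the relation $e_{i,r}e_{i,s}=-e_{i,s-1}e_{i,r+1}$, together with a filtration by the number of inversions to eliminate the rank-two relations) and matching its cardinality with the partition count on the $\KHA$ side. Third, the expected indexing by positive roots of $\widehat{A}_{n-1}$ is not the right target for the Dynkin quiver without potential: Proposition \ref{PBWKHAAn} shows that $\KHA_{A_{n-1}}$ is, as a vector space, the tensor product $R_1\otimes\cdots\otimes R_{n-1}$ of the vertex subalgebras alone, which is strictly smaller than a free PBW module over all positive (let alone affine) roots, so a comparison against that larger index set would not close the argument.
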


The proof will be divided into two parts. 

In the first part, we prove the surjectivity of $\phi_n$ (Proposition \ref{phisurjective}). It relies on the use of the shuffle formula (Proposition \ref{Shuffle}), and some explicit calculations of the shuffle product on degree one elements in the $n=2$ case (Lemma \ref{lemmacalculation0}, \ref{lemmacalculation0'}).  

In the second part we prove the injectivity (Proposition \ref{injectivityphi}). Since that the two algebras $\fU^+_n$ and $\KHA_{A_{n-1}}$ are bi-graded, we will prove it by comparing their bi-graded dimensions. However, a simple check on small examples shows that their bi-graded dimensions are infinite-dimensional. Thus, we have to restrict to their positive/negative parts (Definition \ref{negativesectorKHA}, \ref{positivesectorU+n}).

On the other hand, in \cite{Hsu1} the author defined the \textit{shifted 0-affine algebra}, denoted by $\dot{\Uu}$. Its generators and relations are similar to those of $\fU_n$, but there are two key differences. First, as the notation indicates, $\dot{\Uu}$ is an idempotent modification. Second, unlike $\fU_n$, its presentation involves only finitely many generators and relations.

Despite these differences, since we introduced the notion of a categorical $\dot{\Uu}$-action in \cite{Hsu1}, by combining results from \cite{Hsu2} and \cite{Hsu3} we are able to construct a categorical $\fU_{n}^+$-action and show that it induces semiorthogonal decompositions on weight categories. This leads to our second main result. Before we state it, we include a brief remark on terminology.

\begin{remark}
Although there is an isomorphism of algebras $\fU^+_n \simeq \KHA_{A_{n-1}}$, we will avoid using the term ``categorical $\KHA_{A_{n-1}}$-action". In the literature \cite{T1}, a categorical Hall algebra action usually refers to a functorial action of the categorical Hall algebra itself on a given graded category. Since in our setting the algebra $\fU^+_n$ plays the primary role, we will instead use the term categorical $\fU^+_n$-action throughout this article to avoid confusion.
\end{remark}

\begin{thmx} [Definition \ref{catactkha_n} and Theorem \ref{mainresult_n}]
We define the notion of a categorical $\fU_{n}^+$-action. Moreover, given a categorical $\fU_{n}^+$-action $\Kk$, then there is a semiorthogonal decomposition on each weight category $\Kk(\kk)$ given by the essential images of certain functors.
\end{thmx}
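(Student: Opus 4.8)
The plan is to build the categorical $\fU_n^+$-action out of the categorical $\dot{\Uu}$-action of \cite{Hsu1}, and then to extract the semiorthogonal decomposition from the structural results of \cite{Hsu2, Hsu3}. A categorical $\fU_n^+$-action will consist of a family of $k$-linear, idempotent-complete triangulated weight categories $\Kk(\kk)$, indexed by weights $\kk$, together with raising functors lifting the positive generators of $\fU_n^+$ and $2$-isomorphisms witnessing the defining relations; their adjoints will exist as functors but will not be part of the algebra structure itself. Since $\dot{\Uu}$ is an idempotent modification, its weight categories already play the role of the idempotents $1_{\kk}$, so the categorical $\dot{\Uu}$-action of \cite{Hsu1} supplies the raising $1$-morphisms, their adjoints, and the relations among them. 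The finite presentation of $\dot{\Uu}$ is matched against the (a priori infinitely many) generators of $\fU_n^+$ using \cite{Hsu2, Hsu3}, while the isomorphism $\phi_n : \fU_n^+ \xrightarrow{\simeq} \KHA_{A_{n-1}}$ of Theorem \ref{Thm1} equips the action with the desired algebra structure.

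For the decomposition itself, I would first single out the functors appearing in the statement. These are the composites $\E_I$ of raising functors that categorify the PBW-type generators of $\KHA_{A_{n-1}}$ used in the proof of Theorem \ref{Thm1}; each $\E_I$ maps a lower weight category into $\Kk(\kk)$, and its essential image forms one block of the decomposition. To see that these blocks assemble into a semiorthogonal decomposition of $\Kk(\kk)$, I would verify three properties. First, full faithfulness of each $\E_I$ on its source, which reduces to an adjunction identity $\E_I^R \E_I \cong \mathrm{Id}$ up to lower-order terms that vanish on the relevant source; this is the content imported from \cite{Hsu2, Hsu3}, and it also makes each block admissible. Second, semiorthogonality: for the chosen ordering one has $\Hom(\E_I X, \E_J Y) \cong \Hom(X, \E_I^R \E_J Y)$, and the categorified relations force $\E_I^R \E_J$ to vanish in the relevant range, killing the off-diagonal homomorphisms. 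Third, generation: the classes $[\E_I X]$ must span the Grothendieck group $K_0(\Kk(\kk))$, which I would check by the bigraded bookkeeping underlying the injectivity of $\phi_n$ (Proposition \ref{injectivityphi}), then upgrade to triangulated generation by induction on the weight $\kk$.

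The main obstacle will be the second property: controlling the adjoint composites $\E_I^R \E_J$ precisely enough to see the required $\Hom$-vanishing. In the $q = 0$ (0-affine) setting the categorified relations differ from the familiar quantum-group ones, so the divided powers and their adjoints do not behave as in the Chuang--Rouquier $\SL_2$ theory; the computation must instead be driven by the specific relations of $\dot{\Uu}$. I would isolate this as a separate lemma, first establishing it in the rank-one ($n = 2$) case---where the explicit shuffle computations of Lemma \ref{lemmacalculation0} and Lemma \ref{lemmacalculation0'} pin down the relevant composites---and then propagating it to general $n$ through the Serre-type relations among the $\E_i$.

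Finally, I would specialize the weight categories to $\Kk(\kk) = D^b(\Coh(\Fl_{n,N}))$, the bounded derived category of coherent sheaves on the $n$-step partial flag variety. There the abstract decomposition collapses to a finite, geometric semiorthogonal decomposition of $D^b(\Coh(\Fl_{n,N}))$, providing the main example promised in the introduction and serving as a consistency check on the whole construction.
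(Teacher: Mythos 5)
Your overall skeleton (prove full faithfulness and semiorthogonality by computing the adjoint composites $\E_I^R\E_J$, reduce to the rank-one case, then propagate to general $n$) matches the paper's strategy, but two of your concrete steps would not go through as proposed.

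First, the engine you name for the rank-one computation is the wrong one. Lemmas \ref{lemmacalculation0} and \ref{lemmacalculation0'} are decategorified shuffle identities in $\KHA_{A_1}$; they are used only in the proof of surjectivity of $\phi_n$ (Theorem \ref{Thm1}) and say nothing about the functors $\E^R_{i,s}$. What actually drives the $\Hom$-vanishing in the paper is that the \emph{definition} of a categorical $\fU_n^+$-action is axiomatized to contain exactly the needed relations on adjoints: the idempotent exact triangles ${\E}_{r}{\E}^R_{r}\bo \to \bo \to {\E}^R_{r-1}{\E}_{r-1}\bo$ of condition (3), and the weight-dependent ``shifted'' commutation isomorphisms ${\E}_{r}{\E}^R_{s}\bo_{\kk}\cong{\E}^R_{s-1}{\E}_{r-1}\bo_{\kk}[-1]$ for $1\le r-s\le k_i+k_{i+1}-1$ of condition (4)(a). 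The computation of ${\E}^R_{\blam}{\E}_{\blam'}\bo_{(0,N)}$ then proceeds by repeatedly commuting a single ${\E}_{\lambda+1}$ leftward past the ${\E}^R$'s using (4)(a) until the resulting word applies one lowering functor too many to the highest-weight object and vanishes for weight reasons. Your proposal to ``build'' the definition out of the $\dot{\Uu}$-action rather than axiomatize these adjoint relations leaves you without the tools to run this argument in the abstract setting; the derivation from \cite{Hsu1} and \cite{Hsu3} is the content of the separate Theorem \ref{main result 3} (the flag-variety example), not of the definition.

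Second, your third step (generation: showing the classes $[\E_I X]$ span $K_0(\Kk(\kk))$ and upgrading to triangulated generation) is both unnecessary and unprovable at this level of generality. The paper does not claim the essential images generate $\Kk(\kk)$; its corollary states the semiorthogonal decomposition as $\Kk(\kk)=\langle \Aa(\kk), \mathrm{Im}\,{\E}_{1,\blam(1)}\cdots{\E}_{n-1,\blam(n-1)}\bo_{\eta}\rangle$ with $\Aa(\kk)$ the right orthogonal complement. For an arbitrary categorical action the weight categories are abstract triangulated categories and no bigraded bookkeeping on $\fU_n^+$ can control their Grothendieck groups, so you should drop the generation claim and instead include the orthogonal complement as an admissible component, as the paper does.
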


\begin{remark}
The above result can be viewed as an refinement of the main result in \cite[Theorem 4.1]{Hsu2}, where semiorthogonal decompositions were obtained under the stronger assumption of a full categorical $\dot{\Uu}$-action.
\end{remark}

Since there is an action of $\fU_n$ on the $\GLL_N(\CC)$-equivariant K-theory of $n$-step partial flag varieties, it follows in particular that the positive part $\fU^+_n$ also acts on this equivariant K-theory. It is natural to expect that this action on K-theory can be lifted to a categorical $\fU^+_n$-action on the derived category.

In \cite{Hsu1} we established the existence of a categorical $\dot{\Uu}$-action on the bounded derived categories of coherent sheaves on $n$-step partial flag varieties. In particular, this construction induces an action of $\dot{\Uu}$ on their Grothendieck groups.

Moreover, the definitions of the actions of the generators ($e_{i,r}$ and $f_{i,s}$ in \cite{Hsu1}, corresponding respectively to $F_i(r)$ and $E_i(s)$ in \cite{AM}) on equivariant and non-equivariant K-theory coincide. By combining the results of \cite{Hsu1} and \cite{Hsu3}, we arrive at our third main result.

\begin{thmx} [Theorem \ref{main result 3}]
There is a categorical $\fU_{n}^+$-action on the bounded derived categories of coherent sheaves on $n$-step partial flag varieties.
\end{thmx}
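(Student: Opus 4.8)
The plan is to obtain the desired categorical $\fU_n^+$-action by extracting, from the categorical $\dot{\Uu}$-action on $\D^b(\Coh(\Fl_{n,N}))$ constructed in \cite{Hsu1}, the functors that realize its positive part. Under the dictionary recalled above between the generators of \cite{Hsu1} and those of \cite{AM}, the positive part $\fU_n^+$ corresponds to a distinguished family of the generating functors of the $\dot{\Uu}$-action; I would take these functors, together with the invertible shift functors (given geometrically by tautological line-bundle twists) that interpolate between the parameters, as the candidate data for a categorical $\fU_n^+$-action in the sense of Definition \ref{catactkha_n}.

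Concretely, first I would recall the weight categories $\Kk(\kk) = \D^b(\Coh(\Fl_{n,N}))$, indexed by the flag dimension vectors $\kk$, together with the generating Fourier--Mukai functors of the $\dot{\Uu}$-action defined by convolution along the one-step flag correspondences. Next I would verify that the selected positive-part functors, equipped with the natural transformations inherited from the $\dot{\Uu}$-action, satisfy every relation demanded by Definition \ref{catactkha_n}; under the isomorphism of Theorem \ref{Thm1} these relations are governed by the shuffle formula of Proposition \ref{Shuffle}, and I would match them against explicit isomorphisms of the twisted convolution kernels, importing the coherence and adjunction results of \cite{Hsu3} as needed. Finally, to confirm that the resulting action is the expected one, I would pass to Grothendieck groups and identify the decategorified operators with the $\fU_n^+$-action of \cite{AM}, using the observation recorded above that the generator actions on equivariant and non-equivariant $K$-theory coincide.

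The main obstacle will be the relation-by-relation verification at the categorical level. Because $\dot{\Uu}$ is an idempotent modification presented by only finitely many relations, whereas $\fU_n^+ \cong \KHA_{A_{n-1}}$ carries the full infinite family of shuffle relations, the categorical relations established in \cite{Hsu1} do not literally exhaust those required by Definition \ref{catactkha_n}. I expect to close this gap by using the shift functors to propagate the finitely many $\dot{\Uu}$-relations across all parameter values and then checking that the resulting kernel isomorphisms assemble into precisely the $\fU_n^+$-relations; carrying this out, together with the decategorification check via \cite{Hsu3}, is the technical heart of the argument.
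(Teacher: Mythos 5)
Your proposal follows essentially the same route as the paper: the paper defines $\E_{i,r}\bo_{\kk}$ as the line-bundle-twisted correspondence functors of \cite[Definition 5.5]{Hsu1}, uses the invertible twist functors $\spi^{\pm}_i$ to propagate the finitely many relations of the categorical $\dot{\Uu}$-action to all $r\in\ZZ$ via $\F_{i,r}\bo_{\kk}\cong(\spi^-_i)^{-r}\F_{i,0}(\spi^-_i)^{r}\bo_{\kk}[-r]$, and imports the adjunction and exact-triangle results of \cite{Hsu3} to verify conditions (1)--(4) of Definition \ref{catactkha_n} --- exactly the gap-closing mechanism you identify as the technical heart. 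The only cosmetic difference is that the verification is done directly against the axioms of \cite[Definition 3.1]{Hsu1} rather than via the shuffle formula, and no separate decategorification check is needed.
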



\subsection{Further remarks}
Finally, we conclude the introduction with a few remarks and two natural directions for future research that arise from the present work.

The cohomological and K-theoretic Hall algebras of Dynkin quivers have been studied in several earlier works. For example, Kontsevich–Soibelman \cite[Subsection 2.8]{KS} considered $\CoHA_{A_2}$, Rimányi \cite{Rim} treated $\CoHA_{Q}$ for Dynkin quivers $Q$ of type other than $E_8$, and Pădurariu \cite[Subsection 5.2]{T1} investigated $\KHA_{A_{n-1}}$. Our results therefore fit into a broader line of research connecting Hall algebras and representation theory of Dynkin quivers.

In both \cite{AM} and \cite{Hsu1}, the coproduct or Hopf algebra structures on $\fU_n$ and $\dot{\Uu}$ were not investigated. It would be very interesting to introduce such structures and to apply the Drinfeld double construction in order to recover the full algebra. For related developments, see \cite{T2}.

Finally, although in this article we restrict ourselves to type $A$, the same argument is expected to extend to any Dynkin quiver once appropriate analogues of the 0-affine algebra are defined. At present, such algebras have been introduced only for type $A$ (see \cite{AM} and \cite{Hsu1}). Establishing their counterparts for other Dynkin types would provide categorical actions in this broader setting and could lead to semiorthogonal decompositions of derived categories of moduli spaces of (framed) quiver representations.

\subsection{Acknowledgements}
I would like to thank Alexandre Minets for suggesting the statement of Theorem \ref{main result 1} and for proposing the use of the shuffle presentation of 
$\KHA$ in its proof. This idea originated during discussions at Academia Sinica in 2023, whose excellent working environment is gratefully acknowledged. I also thank Tudor Pădurariu and Yu Zhao for helpful comments on an earlier draft of this article. Finally, the author is supported by NSTC grant 113-2115-M-001-003-MY3.

\section{Background materials}
In this section, we recall the basic results about quiver representations and the construction of K-theoretic Hall algebra associated to a quiver (without potentials).

\subsection{Quiver and its representations}

By a \textit{quiver}, we mean a quadruple $Q=(I,E,s,t)$, where $I$ is the vertex set, $E$ is the edge set, and $s,t:E \rightarrow I$ are the source and target maps. Moreover, we let $a_{ij}$ be the number of arrows from vertex $i$ to vertex $j$ for any $i,j \in I$.

We call any $I$-tuple non-negative integers $\alpha=(\alpha^i)_{i \in I} \in \ZZ^I_{\geq 0}$ a \textit{dimension vector}. A \textit{representation} of $Q$ of dimension vector $\alpha$ consists of vector spaces $V^i$ of dimension $\alpha^i$ for each $i \in I$ and $a_{ij}$ linear maps from $V^i$ to $V^j$ for all $i,j \in I$. Then, we denote the space of all representations of $Q$ of dimension $\alpha$ is by $R_\alpha Q$, and it has the following explicit description
\begin{equation*}
    R_{\alpha}Q=\prod_{e \in E} \Hom(V^{s(e)},V^{t(e)}).
\end{equation*} Moreover, we consider the reductive group $G_\alpha Q \coloneqq \prod_{i \in I} \GLL(V^i)$ which acts on $R_\alpha Q$ via conjugation. We identify two representations of in $R_\alpha Q$ if there is an element of $G_{\alpha}Q$ sending one representation to the other. Finally, we denote the stack of representations of $Q$ of dimension vector $\alpha$ to be $\Rep_{\alpha}Q$, which is given by 
\begin{equation} \label{StackRep}
    \Rep_{\alpha}Q \coloneqq [R_{\alpha}Q/G_{\alpha}Q].
\end{equation} For simplicity, we will denote elements in $\Rep_{\alpha}Q$ as $V^{\bullet}=(V^i)_{i \in I}$.

\subsection{K-theoretic Hall algebras for quivers without potentials}
Let $Q=(I,E,s,t)$ be a quiver. For any two dimension vectors $\alpha=(\alpha^i)_{i \in I},\beta=(\beta^i)_{i \in I} \in \ZZ^I_{\geq 0}$, we define $R_{\alpha,\beta}Q$ to be the closed affine subspace of $R_{\alpha+\beta}Q$ which consists of representations of $Q$ of dimension $\alpha+\beta$ that contain a subrepresentation of dimension $\alpha$ ,i.e.
\begin{equation*}
R_{\alpha,\beta}Q \coloneqq \{ (\varphi_{e})_{e \in E} \in R_{\alpha+\beta}Q \ | \ \exists \ W^i \subset V^i, \ \dim W^i=\alpha_i \ \forall \ i, \ \varphi_{e}(W^{s(e)}) \subset W^{t(e)} \ \forall \ e\in E \}.
\end{equation*} Similarly, we consider the parabolic subgroup $G_{\alpha,\beta}Q \coloneqq \prod_{i} P_{\alpha^i,\beta^i} \subset G_{\alpha+\beta}Q=\prod_{i} \GLL(V^{i})$, where $P_{\alpha^i,\beta^i} \subset \GLL(V^i)$ is the subgroup that stabilizes the subspace $W^i \subset V^i$. Clearly, $G_{\alpha,\beta}Q$ acts on $R_{\alpha,\beta}Q$ by conjugation. Then, we define the stack 
\begin{equation*}
    \Fl_{\alpha,\beta}Q \coloneqq [R_{\alpha,\beta}Q/G_{\alpha,\beta}Q],
\end{equation*} and will simply denote the elements in $\Fl_{\alpha,\beta}Q$ to be $W^{\bullet} \subset V^{\bullet} = (W^i \subset V^i)_{i \in I}$.

There is a natural correspondence
\begin{equation} \label{corr}
    \xymatrix{
    & \Fl_{\alpha,\beta}Q \ar[ld]_{q} \ar[rd]^{p} \\
    \Rep_{\alpha}Q \times \Rep_{\beta}Q &  & \Rep_{\alpha+\beta}Q
    }
\end{equation} where the maps $p$, $q$ are given by 
\begin{equation*}
    q(W^{\bullet} \subset V^{\bullet})=(W^{\bullet}, V^{\bullet}/W^{\bullet}), \ p(W^{\bullet} \subset V^{\bullet})=V^{\bullet}.
\end{equation*}

For any space $X$, we denote $K(X)$ to be the complexified Grothendieck group of $X$, respectively. The maps $p,q$ induce pushforwards and pullbacks in K-theory. Then, from the correspondence (\ref{corr}), we obtain the following maps
\begin{align*}
    m^K_{\alpha,\beta}&:K(\Rep_{\alpha}Q) \otimes K(\Rep_{\beta}Q) \rightarrow K(\Rep_{\alpha+\beta}Q). 
\end{align*}

\begin{definition} 
For a quiver $Q$, its K-theoretic Hall algebra is defined to be the vector space $\bigoplus_{\alpha}K(\Rep_{\alpha}Q)$ equipped with the product $m^K_{\alpha,\beta}$.
\end{definition}

\begin{proposition} \cite[Theorem 3.3]{T1}
$\KHA_{Q}\coloneqq (\bigoplus_{\alpha}K(\Rep_{\alpha}Q),m^{K}_{\alpha,\beta})$ is an associative algebra.
\end{proposition}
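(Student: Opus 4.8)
The plan is to deduce associativity of $m^K$ from the geometry of three-step flags, by showing that the two iterated products agree because each computes a single symmetric convolution. Fix dimension vectors $\alpha,\beta,\gamma$ and elements $x \in K(\Rep_{\alpha}Q)$, $y \in K(\Rep_{\beta}Q)$, $z \in K(\Rep_{\gamma}Q)$; the goal is $m^K(m^K(x,y),z) = m^K(x,m^K(y,z))$. I would first introduce the stack $\Fl_{\alpha,\beta,\gamma}Q$ of representations $V^{\bullet}$ of dimension $\alpha+\beta+\gamma$ equipped with a filtration $0 \subset W_1^{\bullet} \subset W_2^{\bullet} \subset V^{\bullet}$ with $\dim W_1^{\bullet}=\alpha$ and $\dim W_2^{\bullet}=\alpha+\beta$. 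It carries a smooth map $Q \colon \Fl_{\alpha,\beta,\gamma}Q \to \Rep_{\alpha}Q \times \Rep_{\beta}Q \times \Rep_{\gamma}Q$ recording the successive subquotients $(W_1^{\bullet}, W_2^{\bullet}/W_1^{\bullet}, V^{\bullet}/W_2^{\bullet})$, and a proper map $P \colon \Fl_{\alpha,\beta,\gamma}Q \to \Rep_{\alpha+\beta+\gamma}Q$ forgetting the filtration. The whole argument then reduces to showing that both bracketings equal $P_{*}Q^{*}(x \boxtimes y \boxtimes z)$.

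Next I would treat the left bracketing $(x\cdot y)\cdot z$. Writing $p,q$ for the two maps out of $\Fl_{\alpha,\beta}Q$ as in (\ref{corr}), and $p',q'$ for those out of $\Fl_{\alpha+\beta,\gamma}Q$, one has $(x\cdot y)\cdot z = p'_{*}(q')^{*}\big((p_{*}q^{*}(x\boxtimes y))\boxtimes z\big)$. The central point is that the square with bottom arrow $p\times\mathrm{id}\colon \Fl_{\alpha,\beta}Q \times \Rep_{\gamma}Q \to \Rep_{\alpha+\beta}Q \times \Rep_{\gamma}Q$ and right arrow $q'$ is Cartesian with upper-left corner $\Fl_{\alpha,\beta,\gamma}Q$, since a point of the fibre product is exactly a chain $W_1^{\bullet}\subset W_2^{\bullet}\subset V^{\bullet}$. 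Flat base change then lets me rewrite $(q')^{*}(p\times\mathrm{id})_{*}$ as a pushforward along the $W_1$-forgetting map followed by a pullback; functoriality of pushforward and pullback, together with the Künneth isomorphism $K(X\times Y)\cong K(X)\otimes K(Y)$ absorbing the external factor $z$, collapses the composite to $P_{*}Q^{*}$. The right bracketing $x\cdot(y\cdot z)$ is symmetric: the relevant Cartesian square has bottom arrow $\mathrm{id}\times p\colon \Rep_{\alpha}Q \times \Fl_{\beta,\gamma}Q \to \Rep_{\alpha}Q \times \Rep_{\beta+\gamma}Q$ and right arrow the subquotient map of $\Fl_{\alpha,\beta+\gamma}Q$, its fibre product is again $\Fl_{\alpha,\beta,\gamma}Q$, and the same argument yields $P_{*}Q^{*}$.

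The step I expect to be the main obstacle is the base change, which has two parts: verifying that each square is genuinely Cartesian (by identifying its fibre product with $\Fl_{\alpha,\beta,\gamma}Q$ on the nose) and checking that underived base change is valid in $K$-theory, i.e. that the squares are Tor-independent. This is precisely where the absence of a potential is used: every $R_{\bullet}Q$ is an affine space, so all the flag stacks in sight are smooth, the forgetful maps of type $p$ are proper (of flag-bundle type), and the subquotient maps of type $q$ are smooth affine-bundle stacks. Smoothness of one leg of each square guarantees Tor-independence, so flat base change applies with no derived corrections. Once this is secured, the remaining steps — composing the two proper pushforwards into $P_{*}$ and the two smooth pullbacks into $Q^{*}$ by functoriality — are purely formal, and assembling them shows that both bracketings equal $P_{*}Q^{*}(x\boxtimes y\boxtimes z)$, hence associativity.
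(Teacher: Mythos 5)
The paper does not prove this proposition itself --- it is quoted directly from \cite[Theorem 3.3]{T1} --- and your argument is precisely the standard one given there: both bracketings are identified with $P_*Q^*$ on the three-step flag stack via the two Cartesian squares, with Tor-independence supplied by smoothness of the subquotient maps $q$ and properness of the forgetful maps $p$. Your proposal is correct and matches the cited proof in both structure and the key technical point (validity of underived base change in the absence of a potential).
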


\begin{remark}
If we replace K-theory by cohomology, then we obtain the vector space $\bigoplus_{\alpha}H^*(\Rep_{\alpha}Q)$. Similarly, the correspondence (\ref{corr}) induces a product denoted by $m^H_{\alpha,\beta}$. By \cite[Theorem 1]{KS}, $\CoHA_Q \coloneqq (\bigoplus_{\alpha}H^*(\Rep_{\alpha}Q),m^{H}_{\alpha,\beta})$ is an associative algebra, which is called the cohomological Hall algebra of $Q$. 
\end{remark}

\subsection{The shuffle formula for the product}

It is well-known that for the general linear group $\GLL_n(\CC)$, the equivariant K-theory of a point is given by 
\begin{equation*}
    K^{\GLL_n(\CC)}(pt) \cong \CC[x^{\pm 1}_1,...,x^{\pm 1}_{n}]^{\fS_n}
\end{equation*} where $\fS_n$ is the symmetric group of $n$ letters, which is isomorphic to the Weyl group of $\GLL_n(\CC)$.

From the description of the representation stack in (\ref{StackRep}), as a vector space, we have 
\begin{equation} \label{polyiso}
    K(\Rep_{\alpha}Q) \cong \bigotimes_{i \in I} K^{\GLL_{\alpha^i}(\CC)}(pt) \cong \bigotimes_{i \in I} \CC[x^{\pm 1}_{i1},...,x^{\pm 1}_{i\alpha^i}]^{\fS_{\alpha^i}}.
\end{equation} For simplicity, we will just write $K(\Rep_{\alpha}Q) \cong \CC[x^{\pm 1}_{ij}]^{\fS_{\alpha}}$ where $i \in I$, $1 \leq j \leq \alpha^i$, and $\fS_{\alpha}=\prod_{i \in I} \fS_{\alpha^i}$.

Under the isomorphism (\ref{polyiso}), there is an explicit formula (called the shuffle formula) for the multiplication $m^{K}_{\alpha,\beta}$ in the K-theoretic Hall algebra, which is due to \cite{T1}. 

\begin{proposition}\cite[Proposition 3.4]{T1} \label{Shuffle}
For any two dimension vectors $\alpha, \ \beta \in \NN^I$, given $f \in K(\Rep_{\alpha}Q)$ and $g \in K(\Rep_{\beta}Q)$. Their multiplication in $\KHA_{Q}$ is $m^K_{\alpha,\beta}(f,g)$, which is an element in $K(\Rep_{\alpha+\beta}Q)$ with explicit formula given by 
\begin{equation*}
m^K_{\alpha,\beta}(f,g)(x_{ij})= \frac{1}{|\fS_{\alpha}| |\fS_{\beta}|} \Sym \Biggl ( f(x_{ij})g(x_{i'j'}) \frac{\prod\limits_{\substack{i \in I \\ 1 \leq j \leq \alpha^i}} \prod\limits_{\substack{i' \in I \\ \alpha^{i'}+1 \leq j' \leq \alpha^{i'}+\beta^{i'}}} (1-x_{ij}x^{-1}_{i'j'})^{c(i,i')}}{\prod\limits_{i \in I} \prod\limits_{\substack{1 \leq j \leq \alpha^i \\ \alpha^i+1 \leq j' \leq \alpha^i+\beta^i}} (1-x_{ij}x^{-1}_{ij'})}   \Biggr)
\end{equation*} where $c(i,i')$ is the number of arrow from $i$ to $i'$ in $Q$.
\end{proposition}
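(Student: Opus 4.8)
The plan is to compute the operator $m^K_{\alpha,\beta}=p_*q^*$ directly in the polynomial model (\ref{polyiso}), by factoring the correspondence (\ref{corr}) into elementary pieces whose pullbacks and pushforwards are explicitly computable, and then composing them. Write $G=G_{\alpha+\beta}Q$, $P=G_{\alpha,\beta}Q$, and $L=G_\alpha Q\times G_\beta Q$ for the Levi quotient of $P$. Since $R_{\alpha,\beta}Q$ is an affine space, homotopy invariance gives $K(\Fl_{\alpha,\beta}Q)\cong K^{P}(pt)\cong K^{L}(pt)$, the second isomorphism because the unipotent radical of $P$ is contractible; this identifies $K(\Fl_{\alpha,\beta}Q)$ with the Laurent polynomials symmetric separately in the ``sub'' variables $x_{ij}$ ($1\le j\le\alpha^i$) and the ``quotient'' variables $x_{ij'}$ ($\alpha^i+1\le j'\le\alpha^i+\beta^i$). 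Under this model $q$ is the composite of the vector-bundle projection $R_{\alpha,\beta}Q\to R_\alpha Q\times R_\beta Q$ (forgetting the extension/off-diagonal data) with the classifying map $BP\to BL$; both induce isomorphisms on K-theory, so $q^*(f\boxtimes g)=f(x_{ij})\,g(x_{ij'})$ with $f$ in the sub-variables and $g$ in the quotient variables.

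Next I would factor $p$ as $[R_{\alpha,\beta}Q/P]\xrightarrow{\iota}[R_{\alpha+\beta}Q/P]\xrightarrow{\pi}[R_{\alpha+\beta}Q/G]$, where $\iota$ is the $P$-equivariant regular closed embedding of affine spaces induced by $R_{\alpha,\beta}Q\subset R_{\alpha+\beta}Q$, and $\pi$ is the proper map with fibre the partial flag variety $G/P\cong\prod_{i\in I}\Gr(\alpha^i,\alpha^i+\beta^i)$; thus $p=\pi\circ\iota$ is proper. For $\iota_*$ I would invoke the Koszul resolution of $\Oo_{R_{\alpha,\beta}Q}$ on $R_{\alpha+\beta}Q$: the pushforward then becomes multiplication by $\lambda_{-1}(N^\vee)$, where $N$ is the normal bundle. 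The subrepresentation condition $\varphi_e(W^{s(e)})\subset W^{t(e)}$ kills precisely the off-diagonal blocks, so $N=\bigoplus_{e\in E}\Hom(W^{s(e)},U^{t(e)})$; reading off the torus characters — an arrow from $i$ to $i'$ contributes $x_{i'j'}x_{ij}^{-1}$ — gives $\lambda_{-1}(N^\vee)=\prod(1-x_{ij}x_{i'j'}^{-1})^{c(i,i')}$, which is exactly the numerator of the stated formula.

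The main computation, and the step I expect to be the crux, is $\pi_*$. This is a K-theoretic Gysin map along the flag bundle, and I would evaluate it by Atiyah--Bott/Lefschetz localization on $\prod_{i}\Gr(\alpha^i,\alpha^i+\beta^i)$: the torus-fixed points are indexed by $\fS_{\alpha+\beta}/(\fS_\alpha\times\fS_\beta)$, and at each fixed point the cotangent space $T^\vee$ is $\Hom(W^i,U^i)^\vee$ at each vertex, with characters $x_{ij}x_{ij'}^{-1}$. Summing $w(h)/\lambda_{-1}(T^\vee)$ over coset representatives and rewriting the sum over the full group yields the symmetrization operator $h\mapsto\frac{1}{|\fS_\alpha||\fS_\beta|}\,\Sym\!\big(h/\prod(1-x_{ij}x_{ij'}^{-1})\big)$, supplying both the denominator and the normalized $\Sym$. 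Composing $\pi_*\circ\iota_*\circ q^*$ then produces the asserted shuffle formula.

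The points that demand the most care are the inversion and sign conventions in the characters of $N$ and $T_{\Gr}$ (i.e.\ distinguishing $1-x/y$ from $1-y/x$, and which of the sub/quotient variables sits in the numerator $x_{ij}$-position), the justification that $\pi_*$ is legitimately computed by localization in this stacky setting — which I would reduce to representation rings of tori, observing that all spaces in sight are equivariant affine bundles over a point — and the verification that the a priori rational expression produced by $\Sym$ lands back in the Laurent polynomial ring $\CC[x_{ij}^{\pm1}]^{\fS_{\alpha+\beta}}$, so that $m^K_{\alpha,\beta}(f,g)$ genuinely defines a class in $K(\Rep_{\alpha+\beta}Q)$.
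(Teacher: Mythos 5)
The paper does not prove this proposition itself; it imports it verbatim from \cite[Proposition 3.4]{T1}. Your argument is essentially the standard proof given there (and its cohomological precursor in \cite{KS}): identify everything with Weyl-invariant Laurent polynomials via homotopy invariance, observe that $q^*$ is the obvious inclusion of variables since $R_{\alpha,\beta}Q\to R_\alpha Q\times R_\beta Q$ is an affine bundle and $K^P\cong K^L$, obtain the numerator as $\lambda_{-1}(N^\vee)$ of the regular embedding $R_{\alpha,\beta}Q\subset R_{\alpha+\beta}Q$ via the Koszul resolution, and obtain the normalized symmetrization with denominator $\prod(1-x_{ij}x_{ij'}^{-1})$ from the Grassmannian-bundle pushforward (Weyl integration/localization). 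The character bookkeeping you describe for $N$ and for $T^\vee_{G/P}$ matches the stated formula, so the proposal is correct and follows the same route as the cited source.
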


\begin{remark}
For more studies on shuffle algebras, we refer readers to \cite{N1} and \cite{N2}.
\end{remark}

\begin{remark}
Similarly, there is a shuffle formula for the product of $\CoHA_Q$, see \cite[Theorem 2]{KS}.
\end{remark}


\begin{example} \cite[Corollary 3.5]{T1}
For $Q=\xymatrix{\cdot \ar@(dr,ur)}$ \ \ \ which is the Jordan quiver, and consider the action of $\CC^*$ which scales representations of $Q$ with weight one. Then, there is an isomorphism
\begin{equation}
\KHA^{\CC^*}_{Q} \cong \brmU^+_{q}(L\SL_2)   
\end{equation}
\end{example}

\section{0-affine quantum groups and their action}

In this section, we first recall the definition of 0-affine quantum groups defined in \cite{AM}. Then, we mention the main result in \textit{loc. cit.} where there is a surjective algebra homomorphism from the 0-affine quantum group to the affine 0-Schur algebra.

\subsection{Definition of the algebra}


\begin{definition} \cite[Definition 1.1]{AM} \label{0affinequantumgroup}
The \textit{0-affine quantum group}, denoted by $\fU_n=\brmU_0(\GL_n[t,t^{-1}])$, is generated by the following elements
\begin{equation*}
    \{e_{i,r}, f_{i,r}, h_{n,r}\}_{\ 1 \leq i\leq n-1}^{r\in \ZZ}.
\end{equation*} subject to the following relations
\begin{enumerate}
    \item Relations on $e$ generators
\begin{align} \label{er}
 \begin{split} 
e_{i,r}e_{i,s}&=-e_{i,s-1}e_{i,r+1}, \ \text{for all} \ r,s \in \ZZ,\ \text{and} \ 1 \leq i \leq n-1. \\
e_{i+1,s}e_{i,r}&=e_{i,r}e_{i+1,s}-e_{i,r+1}e_{i+1,s-1}, \ \text{for all} \ r,s \in \ZZ, \ \text{and} \ 1 \leq i \leq n-2. \\
e_{i,r}e_{j,s}&=e_{j,s}e_{i,r}, \ \text{for all} \ r,s \in \ZZ,\ 1 \leq i,j \leq n-1 \ \text{and} \ |i-j| \geq 2.
\end{split}
\end{align}

\item Relations on $f$ generators
\begin{align} \label{fs}
\begin{split}
f_{i,r}f_{i,s}&=-f_{i,s+1}f_{i,r-1}, \ \text{for all} \ r,s \in \ZZ,\ \text{and} \ 1 \leq i \leq n-1. \\
f_{i,r}f_{i+1,s}&=f_{i+1,s}f_{i,r}-f_{i+1,s-1}f_{i,r+1}, \ \text{for all} \ r,s \in \ZZ,\ \text{and} \ 1 \leq i \leq n-2. \\
f_{i,r}f_{j,s}&=f_{j,s}f_{i,r}, \ \text{for all} \ r,s \in \ZZ,\ 1 \leq i,j \leq n-1 \ \text{and} \ |i-j| \geq 2.
\end{split}
\end{align}

\item Relations between $e$ and $f$ generators
\begin{align} \label{erfs}
\begin{split}
[e_{i,r},f_{j,s}]&=0, \ \text{for all} \ r,s \in \ZZ, \ \text{and} \ 1 \leq i,j \leq n-1 \ \text{with} \ i \neq j, \\
e_{i,r}f_{i,s}-f_{i,s}e_{i,r}&=e_{i,r'}f_{i,s'}-f_{i,s'}e_{i,r'}, \ \text{whenever} \ r+s=r'+s',\ \text{and} \ 1 \leq i\leq n-1.
\end{split}
\end{align} 
We define $h_{i,r+s} \coloneqq [e_{i,r},f_{i,s}]$ for all $r,s \in \ZZ$ and $1 \leq i \leq n-1$.

\item Relations on $h$ generators
\begin{equation}
h_{i,r}h_{j,s}=h_{j,s}h_{i,r}, \ \text{for all} \ 1 \leq i,j \leq n, \ r,s \in \ZZ.
\end{equation}

\item Relations between $h$ and $e$ generators
\begin{align} \label{hres}
\begin{split}
h_{i,r}e_{i,s}&=-e_{i,s-1}h_{i,r+1}, \ \text{for all} \ r,s \in \ZZ,\ \text{and} \ 1 \leq i \leq n-1 \\
h_{n,s}e_{n-1,r}&=e_{n-1,r}h_{n,s}-e_{n-1,r+1}h_{n,s-1}, \ \text{for all} \ r,s \in \ZZ, \\
h_{n,r}e_{i,s}&=e_{i,s}h_{n,r}, \ \text{for all} \ r,s \in \ZZ, \ \text{and} \ 1 \leq i \leq n-2.  
\end{split}
\end{align}

\item Relations between $h$ and $f$ generators
\begin{align} \label{hrfs}
\begin{split}
h_{i,r}f_{i,s}&=-f_{i,s+1}h_{i,r-1}, \ \text{for all} \ r,s \in \ZZ,\ \text{and} \ 1 \leq i \leq n-1 \\
f_{n-1,r}h_{n,s}&=h_{n,s}f_{n-1,r}-h_{n,s-1}f_{n-1,r+1}, \ \text{for all} \ r,s \in \ZZ, \\
h_{n,r}f_{i,s}&=f_{i,s}h_{n,r},  \ \text{for all} \ r,s \in \ZZ, \ \text{and} \ 1 \leq i \leq n-2. 
\end{split}
\end{align}
\end{enumerate}
\end{definition}

\begin{remark}
Note that there is also an algebra called \textit{shifted 0-affine algebra}, denoted by $\dot{\Uu}$, defined in \cite{Hsu1} with similar generators and relations. However, $\dot{\Uu}$ is different from $\fU$ in two ways. First, $\dot{\Uu}$ is an idempotent modification. Second, $\dot{\Uu}$ is finitely generated.
\end{remark}

\begin{definition} \label{positivepart}
We define the positive part of $\fU_n$, denoted by $\fU^+_n$, to be the subalgebra generated by $e_{i,r}$ with $1 \leq i \leq n-1$ and $r \in \ZZ$, subject to the relation (\ref{er}).
\end{definition}

\subsection{Surjections to the affine 0-Schur algebras}

Let $N \geq 2$ be a positive integer. We say $\kk$ is a weak composition of $N$ of lenth $n$ if $\kk=(k_1,...,k_n) \in \ZZ^n_{\geq 0}$ and $\sum_{i}k_i=N$. We write $\kk \vDash N$ and $l(\kk)=n$ in this case. We denote $C(n,N)$ to be the set of all weak compositions of $N$ of length $n$, i.e.
\begin{equation*}
C(n,N)=\{\kk=(k_1,...,k_n) \ | \ \kk \vDash N, \ l(\kk)=n \}.
\end{equation*} 

We denote $\Fl_{n,N}$ to be the variety of all $n$-step partial flags in $\CC^N$, i.e. 
\begin{equation*}
    \Fl_{n,N}=\{ 0\subset V_1 \subset ... \subset V_n=\CC^N\}.
\end{equation*} Then, clearly, there is a decomposition $\Fl_{n,N}=\bigsqcup_{\kk \in C(n,N)} \Fl_{\kk}(\CC^N)$, where $\Fl_{\kk}(\CC^N)$ is the variety of $\kk$-partial flags defined as follows 
\begin{equation} \label{eq fl}
\Fl_{\kk}(\CC^N) \coloneqq \{V_{\bullet}=(0=V_{0} \subset V_1 \subset ... \subset V_{n}=\CC^N) \ | \ \tdim V_{i}/V_{i-1}=k_{i} \ \text{for} \ \text{all} \ i\}.
\end{equation}

\begin{proposition}\cite[Definition 1.6, Theorem 1.7]{AM}
There is a convolution algebra structure on $K^{\GLL_{N}(\CC)}(\Fl_{n,N} \times \Fl_{n,N})$, called the affine 0-Schur algebra, denoted by $\brmS_0^{\aff}(n,N)$. Moreover, for any positive integer $N$, there is a surjective algebra homomorphism
\begin{equation*}
\fU_{n} \twoheadrightarrow \brmS_0^{\aff}(n,N).
\end{equation*}
\end{proposition}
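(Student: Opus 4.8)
The plan is to treat the two assertions separately: first the existence of the convolution algebra structure, and then the construction of the surjection. Writing $X = \Fl_{n,N}$, I would establish the product via the standard Chriss–Ginzburg formalism. Consider the three projections $p_{12}, p_{13}, p_{23} : X \times X \times X \to X \times X$. Since each connected component $\Fl_{\kk}(\CC^N)$ is a smooth projective $\GLL_N(\CC)$-variety, the map $p_{13}$ is proper on every component, so the formula $a \ast b \coloneqq (p_{13})_*\bigl(p_{12}^* a \otimes p_{23}^* b\bigr)$ defines an equivariant product on $K^{\GLL_N(\CC)}(X \times X)$. Associativity is then a formal consequence of the projection formula and flat base change applied to the fourfold product $X^{4}$, entirely parallel to the cohomological case and requiring no new geometric input.

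The substantive part is the surjection $\fU_{n} \twoheadrightarrow \brmS_0^{\aff}(n,N)$. I would first assign geometric classes to the generators. For each $1 \le i \le n-1$, let $Z_i \subset X \times X$ be the correspondence parametrizing pairs of flags $(V_\bullet, V'_\bullet)$ that agree in every step except the $i$-th, where $V_i \subset V'_i$ with $\tdim V'_i / V_i = 1$; the index $r \in \ZZ$ is encoded by twisting the structure sheaf $\Oo_{Z_i}$ by the $r$-th power of the tautological line bundle supported on this one-dimensional modification. This produces a candidate image $\phi(e_{i,r})$, and likewise $\phi(f_{i,r})$ using the opposite inclusion and $\phi(h_{n,r})$ from a diagonal class twisted by a power of the determinant of the tautological bundle at the last step. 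To obtain a well-defined algebra homomorphism, I would then verify that these classes satisfy the defining relations (\ref{er})--(\ref{hrfs}) by computing the relevant convolution products: each relation reduces to an identity of classes of composed correspondences in $X \times X$, evaluated either by directly analysing the fibre-product geometry of the $Z_i$ or by equivariant localization at the $\GLL_N(\CC)$-fixed flags.

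The main obstacle, I expect, is precisely this verification, and in particular the ``$q=0$'' relations such as $e_{i,r}e_{i,s} = -e_{i,s-1}e_{i,r+1}$ and the mixed relations (\ref{hres}), (\ref{hrfs}). The characteristic sign and the shift of the spectral indices $r \mapsto r \pm 1$ do not come from a transverse intersection of correspondences; rather they emerge from the Koszul/excess structure of the non-reduced fibre product $Z_i \times_X Z_i$, and controlling this scheme structure together with the twists by the tautological line bundle is where the real work lies. Organising the computation by reducing to the rank-one situation (the $n=2$ case, a single codimension-one modification) and then propagating via the commuting relations for $|i-j| \ge 2$ should keep it tractable.

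Finally, for surjectivity I would use the orbit stratification: the $\GLL_N(\CC)$-orbits on $X \times X$ are indexed by their relative position, and the corresponding orbit classes, twisted by tautological line bundles, give a spanning set of $\brmS_0^{\aff}(n,N)$ over $K^{\GLL_N(\CC)}(\mathrm{pt})$. A downward induction on the length of the relative position---expressing a generic orbit class as a convolution product of the simple generators $\phi(e_{i,r})$, $\phi(f_{i,r})$ modulo lower-length terms, in the spirit of the Beilinson–Lusztig–MacPherson presentation of Schur algebras---then shows that the image of $\fU_{n}$ exhausts the whole algebra.
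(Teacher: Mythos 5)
You should first be aware that the paper does not prove this proposition: it is imported verbatim from \cite[Definition 1.6, Theorem 1.7]{AM} and used as a black box, so there is no internal argument to compare yours against. Judged on its own terms, your outline reproduces the standard architecture one would expect (and which \cite{AM} follows): the convolution product $(p_{13})_*(p_{12}^*a\otimes p_{23}^*b)$ is well defined because $\Fl_{n,N}$ is proper, associativity is formal; the generators are sent to classes of line-bundle twists of the codimension-one modification correspondences (this matches the functors ${\E}_{i,r}\bo_{\kk}=p_{2*}(p_1^*\otimes(\V_i'/\V_i)^{\otimes r})$ of (\ref{eir}) on the level of kernels); the relations are checked by composing correspondences; and surjectivity is a Beilinson--Lusztig--MacPherson-type induction on relative position. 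As a plan this is sound.

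However, as written it is a programme rather than a proof: the two steps you yourself identify as ``where the real work lies'' --- verifying relations (\ref{er})--(\ref{hrfs}) and producing the spanning set --- are named but not carried out, and the spanning step in particular needs care, since $K^{\GLL_N(\CC)}(\Fl_{n,N}\times\Fl_{n,N})$ is controlled by a cellular fibration filtration with graded pieces $R(H_w)$ for the orbit stabilizers, not literally by orbit classes over $R(\GLL_N(\CC))$. One concrete inaccuracy: for the relation $e_{i,r}e_{i,s}=-e_{i,s-1}e_{i,r+1}$ there is no ``Koszul/excess structure of a non-reduced fibre product'' to control. The space of chains $V_i\subset V_i'\subset V_i''$ is a smooth iterated flag bundle of the expected dimension, and the intersection $p_{12}^{-1}(Z_i)\cap p_{23}^{-1}(Z_i)$ is Tor-independent; the sign and the shift $r\mapsto r\pm1$ come instead from $R\pi_*$ of line bundles along the $\PP^1$-fibres of the composed correspondence (the choice of intermediate line in $V_i''/V_i$), i.e.\ exactly the rank-one computation that the paper performs algebraically via the shuffle product in Proposition \ref{phisurjective}. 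Recognizing this would simplify your step 3 considerably; as it stands, the proposal anticipates a difficulty that is not there while leaving the genuine verifications undone.
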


\begin{remark}
If we consider the cotangent bundle $T^*\Fl_{n,N}$ and denote $\Nn$ to be the nilpotent cone, then the fiber product $Z_{n,N} \coloneqq T^*\Fl_{n,N} \times_{\Nn} T^*\Fl_{n,N}$ is the Steinberg variety. Its $\GLL_N(\CC) \times \CC^*$-equivariant K-theory carries a convolution algebra structure, which is called the \textit{affine q-Schur algebra}, denoted by $\brmS^{\aff}_q(n,N)=K^{\GLL_N(\CC) \times \CC^*}(Z_{n,N})$. Then, due to Vasserot \cite{V'}, there is a surjective algebra homomorphism
\begin{equation*}
    \brmU_{q}(\widehat{\GL}_n) \twoheadrightarrow \brmS^{\aff}_q(n,N).
\end{equation*}
\end{remark}

\section{Main results}

In this section, we prove the main results in this article. 

\subsection{An algebra isomorphism}
Let $Q(A_n)$ denote the type $A_n$ Dynkin quiver, which is the following diagram with orientation
\begin{equation}
   \xymatrix@R=0.1pc{
        1 & 2 & ... & n-1 & n \\
        \bullet & \bullet \ar[l] & ... \ar[l] & \bullet \ar[l] & \bullet \ar[l]. 
    }
\end{equation} Since we will only consider the type $A$ Dynkin quivers in the rest of this article, we will simply denote its K-theoretic Hall algebra by $\KHA_{A_{n}}$. Recall that, from (\ref{polyiso}), $\KHA_{A_n}$ is graded by the dimension vector and its graded piece is given by 
\begin{equation*}
K(\Rep_{\alpha}Q(A_n)) \cong \CC[x^{\pm 1}_{ij}]^{\fS_{\alpha}}
\end{equation*} where $1 \leq i \leq n$, $1 \leq j \leq \alpha^i$, and $\fS_{\alpha}=\prod_{1 \leq i \leq n}\fS_{\alpha^i}$. Moreover, instead of using $m^{K}_{\alpha,\beta}$, we will simply denote the multiplication in $\KHA_{A_n}$ by $\ast$. Then, we have the following result.

\begin{theorem} \label{main result 1}
There is an algebra isomorphism $\phi_{n+1}:\fU^+_{n+1} \rightarrow \KHA_{A_n}$ given by $\phi_{n+1}(e_{i,r})=x^{-r}_{i,1}$ for all $1\leq i \leq n$ and $r \in \ZZ$.
\end{theorem}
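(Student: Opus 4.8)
The plan is to show that $\phi_{n+1}$ is a well-defined algebra homomorphism, and then to establish surjectivity and injectivity separately.

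\textbf{Well-definedness.} First I would verify that the images $\phi_{n+1}(e_{i,r})=x^{-r}_{i,1}$ satisfy the defining relations (\ref{er}) of $\fU^+_{n+1}$. Each generator sits in the degree-one piece $K(\Rep_{\mathbf{e}_i}Q(A_n))\cong\CC[x^{\pm1}_{i,1}]$ attached to the dimension vector $\mathbf{e}_i$ with a single $1$ at vertex $i$. Applying the shuffle formula (Proposition \ref{Shuffle}), all products appearing in (\ref{er}) involve either two variables at one vertex (a symmetrization over $\fS_2$) or one variable at each of two vertices (trivial symmetrization), and the type-$A$ combinatorics pins down the numerator factors: $c(i,i)=0$, $c(i+1,i)=1$, $c(i,i+1)=0$, and $c(i,j)=0$ for $|i-j|\ge 2$. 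A short direct computation then matches $x^{-r}_{i,1}\ast x^{-s}_{i,1}$ with $-x^{-(s-1)}_{i,1}\ast x^{-(r+1)}_{i,1}$, matches $x^{-s}_{i+1,1}\ast x^{-r}_{i,1}$ with $x^{-r}_{i,1}\ast x^{-s}_{i+1,1}-x^{-(r+1)}_{i,1}\ast x^{-(s-1)}_{i+1,1}$, and yields commutativity for $|i-j|\ge 2$; these are exactly relations (\ref{er}), so $\phi_{n+1}$ is a homomorphism.

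\textbf{Surjectivity.} Since the $x^{-r}_{i,1}$ ($r\in\ZZ$) span each degree-one piece, it suffices to show that $\KHA_{A_n}$ is generated under $\ast$ by its degree-one pieces. The multi-vertex part of this is automatic: every arrow of $Q(A_n)$ points from a higher to a lower vertex, so $c(p,q)=0$ whenever $p<q$, and hence the shuffle product of single-vertex elements taken in increasing vertex order carries no numerator or denominator factors and simply realizes the tensor decomposition $K(\Rep_\alpha)\cong\bigotimes_i\CC[x^{\pm1}_{ij}]^{\fS_{\alpha^i}}$. It therefore suffices to treat one vertex at a time, i.e. the case $n=2$. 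There the $m$-fold product $x^{-r_1}\ast\cdots\ast x^{-r_m}$ is, by Proposition \ref{Shuffle}, a symmetrization of $\prod_j x^{-r_j}_j\prod_{j<k}(1-x_jx^{-1}_k)^{-1}$; clearing the Vandermonde turns this into a bialternant, so the products realize, up to sign and an overall Laurent twist, all Schur polynomials and thus span $\CC[x^{\pm1}_1,\dots,x^{\pm1}_m]^{\fS_m}$. These are the explicit degree-one computations, and together they give surjectivity of $\phi_{n+1}$.

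\textbf{Injectivity.} Both $\fU^+_{n+1}$ and $\KHA_{A_n}$ are bigraded, by the dimension vector $\alpha$ and by the internal $\ZZ$-degree recording the exponent $r$, and $\phi_{n+1}$ respects this bigrading; so injectivity would follow from an equality of bigraded dimensions together with the surjectivity just proved. The difficulty is that, since $r$ ranges over all of $\ZZ$, every bigraded piece is infinite-dimensional. I would therefore pass to the positive (resp. negative) part, restricting to a single sign of the internal degree so that each bidegree becomes finite-dimensional. On these parts I would (i) extract a PBW-type spanning set of $\fU^+_{n+1}$ directly from the relations (\ref{er}) and compute its bigraded Hilbert series, (ii) compute the matching series on the $\KHA$ side, where the positive $\alpha$-piece is the space of genuine symmetric polynomials with Poincaré series $\prod_i\prod_{k=1}^{\alpha^i}(1-q^k)^{-1}$, and (iii) check that the two series agree. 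A surjection between finite-dimensional spaces of equal dimension in each bidegree is an isomorphism, giving injectivity on the positive part, and symmetrically on the negative part; assembling these recovers injectivity of $\phi_{n+1}$.

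\textbf{Main obstacle.} I expect the injectivity step to be the genuine difficulty: the infinite-dimensionality of the bigraded pieces forces the passage to the positive/negative parts, and one must both produce a manageable PBW-type basis of $\fU^+_{n+1}$ out of the degenerate relations (\ref{er}) and verify that its Hilbert series coincides with the symmetric-polynomial Poincaré series on the $\KHA$ side. The single-vertex generation underlying surjectivity is the other delicate point, but it is a finite, explicit bialternant computation rather than a structural obstruction.
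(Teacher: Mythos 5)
Your outline follows the paper's proof quite closely: well-definedness by checking the relations (\ref{er}) against the shuffle formula, surjectivity by reducing to single-vertex generation (the paper's Proposition \ref{PBWKHAAn} plus Lemma \ref{A1generatedegreeone}, where your bialternant/Schur-polynomial argument is an equivalent substitute for the paper's computation $1\ast\cdots\ast 1=1$ followed by multiplying by an arbitrary symmetric Laurent polynomial), and injectivity by comparing bigraded dimensions of the positive/negative sectors, with the same partition count $\sum_{m=m_1+\cdots+m_n}\prod_i p_{\alpha^i}(m_i)$ on both sides obtained from a PBW-type spanning set (the paper uses the sorting consequence of $e_{i,r}e_{i,s}=-e_{i,s-1}e_{i,r+1}$ together with an inversion filtration killed by the Serre-type relation).

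The one genuine gap is your final step, ``assembling'' injectivity on the positive and negative sectors into injectivity on all of $\fU^+_{n+1}$. These two sectors do not exhaust the algebra: a bigraded component $(\fU^+_{n+1})_{\alpha,m}$ is spanned by monomials $e_{i_1,r_1}\cdots e_{i_{|\alpha|},r_{|\alpha|}}$ subject only to $\sum_a r_a=m$, so even for $m\geq 0$ it contains monomials with individual $r_a<0$ and is strictly larger than $(\fU^{+,\geq 0}_{n+1})_{\alpha,m}$; a kernel element could a priori live in such a mixed component. The paper closes this with the degree-shift automorphisms $\tau_k(e_{i,r})=e_{i,r+k}$ on $\fU^+_{n+1}$ and $\eta_k(x^r_{i,1})=x^{r-k}_{i,1}$ on $\KHA_{A_n}$, which satisfy $\phi_{n+1}\circ\tau_k=\eta_k\circ\phi_{n+1}$: for $k$ large enough $\tau_k$ moves any given bigraded component into the positive sector, where injectivity is already known, and then one pulls back. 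You should add this (or an equivalent shifting device) to complete the argument; everything else in your plan is sound.
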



First, we prove the surjectivity of $\phi_{n+1}$.

\subsubsection{Surjectivity}

We need to know more about the algebra structure of $\KHA_{A_n}$. In the following, we prove a few results that are well-known to experts, but we decide to provide proofs for completeness. 

We start from the simplest case where $n=1$. In this case, the K-theoretic Hall algebra of the type $A_1$ quiver (a point) is 
\begin{equation*}
\KHA_{A_1}=\bigoplus_{r \geq 0} K(\Rep_rQ(A_1)) \cong \bigoplus_{r \geq 0} \CC[x^{\pm 1}_1,...,x_r^{\pm 1}]^{\fS_r}
\end{equation*}


Let $\Lambda_r=\CC[x_1^{\pm1},\dots,x_r^{\pm1}]^{\fS_r}$ and set  $\omega(z)=\frac{1}{1-z}$. Then, $\KHA_{A_1}=\bigoplus_{r \geq 0}\Lambda_r$ is the algebra equipped with the shuffle product
\begin{equation} \label{shuffleproductA1}
(f \ast g)(x_1,\dots,x_{m+n})
\coloneqq \frac{1}{m!n!}\Sym\Big[f(x_1,\dots,x_m)\,g(x_{m+1},\dots,x_{m+n})
\prod_{1\leq i\leq m<j \leq m+n}\omega \Big(\frac{x_i}{x_j}\Big)\Big]
\end{equation} for all $f \in \Lambda_m$, $g \in \Lambda_n$ and $\Sym$ denotes the symmetrization operator, i.e., 
\begin{equation*}
    \Sym(f(x_1,...,x_k))=\sum_{\sigma \in \fS_k}f(x_{\sigma(1)},...,x_{\sigma(k)}).
\end{equation*}


First, we have the following two calculation results.

\begin{lemma}\label{lemmacalculation0}
Consider the constant function $1 \in K(\Rep_1Q(A_1))$. Then, for each $r \geq 1$, the following equation holds in $K(\Rep_rQ(A_1))$
\begin{equation*}
    1 \ast 1 \ast ... \ast 1 =1 
\end{equation*} where the right hand side is $1$ itself shuffle product $r$ times.
\end{lemma}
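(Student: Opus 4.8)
The plan is to induct on $r$, using associativity of the shuffle product to peel off one factor at a time, so that the whole statement reduces to a single symmetric-function identity. The base case $r=1$ is trivial, and for $r=2$ one reads off directly from \eqref{shuffleproductA1} that $(1\ast 1)(x_1,x_2)=\omega(x_1/x_2)+\omega(x_2/x_1)=\frac{x_2}{x_2-x_1}+\frac{x_1}{x_1-x_2}=1$. Here I am using that the type $A_1$ quiver has no arrows, so $c(i,i')=0$ in Proposition \ref{Shuffle} and the shuffle kernel consists solely of the denominator factors $\omega(x_i/x_j)=(1-x_i/x_j)^{-1}$.

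For the inductive step, I would write $1^{\ast r}=1^{\ast(r-1)}\ast 1$ and invoke the inductive hypothesis to replace $1^{\ast(r-1)}$ by the constant function $1\in\Lambda_{r-1}$. It then remains to compute $1\ast 1$ with the left factor in $\Lambda_{r-1}$ and the right factor in $\Lambda_1$. Applying \eqref{shuffleproductA1}, the integrand is $G(x_1,\dots,x_r)=\prod_{i=1}^{r-1}\omega(x_i/x_r)$, which is symmetric in $x_1,\dots,x_{r-1}$. I would exploit this symmetry to collapse the symmetrization over $\fS_r$: grouping the $r!$ permutations according to which variable lands in the last slot produces $(r-1)!$ identical copies for each choice, so the prefactor $\tfrac{1}{(r-1)!}$ cancels and one is left with
\[
1^{\ast r}(x_1,\dots,x_r)=\sum_{k=1}^{r}\prod_{i\neq k}\omega(x_i/x_k)=\sum_{k=1}^{r}\frac{x_k^{\,r-1}}{\prod_{i\neq k}(x_k-x_i)}.
\]

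The key step, and the only one requiring a genuine idea rather than bookkeeping, is recognizing this last sum via the Lagrange-interpolation identity $\sum_{k=1}^r x_k^{\,r-1}/\prod_{i\neq k}(x_k-x_i)=1$: it is the leading coefficient of the degree-$(r-1)$ polynomial $x^{r-1}$ reconstructed from its values at the nodes $x_1,\dots,x_r$, equivalently the complete homogeneous symmetric polynomial $h_0=1$. This yields $1^{\ast r}=1$ and closes the induction. I expect the main (and rather minor) obstacle to be carrying out the symmetry reduction cleanly and confirming that the resulting rational expression is genuinely the constant $1$ as an element of $\Lambda_r$, and not merely a formal identity of rational functions; once the interpolation identity is in hand, both points follow immediately.
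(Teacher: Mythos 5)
Your proof is correct and follows essentially the same route as the paper: induction on $r$, peeling off one factor, using the $\fS_{r-1}\times\fS_1$ symmetry to reduce the symmetrization to a sum over coset representatives, and arriving at $\sum_{k=1}^{r} x_k^{r-1}/\prod_{i\neq k}(x_k-x_i)=1$. The only difference is cosmetic: you finish by citing the Lagrange interpolation identity, whereas the paper verifies the same identity directly by clearing denominators and recognizing a cofactor expansion of the Vandermonde determinant.
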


\begin{proof}
We prove this by induction on $r$. Since the case $r=1$ is obvious, so we check it for $r=2$.
\begin{equation*}
    1 \ast 1 =\frac{1}{1!1!}\Sym\Big(1\omega(\frac{x_1}{x_2})\Big)=\Sym(\frac{x_2}{x_2-x_1})=\frac{x_2}{x_2-x_1}-\frac{x_1}{x_2-x_1}=1.
\end{equation*}

Now, we assume that the identity holds for $r \leq k-1$. Then, when $r=k$, we have 
\begin{align}
\begin{split} \label{induction}
1 \ast 1 \ast ... \ast 1 &= (1 \ast ... \ast 1) \ast 1 =1 \ast 1 \\
&= \frac{1}{(k-1)!1!}\Sym\Big( \prod_{1 \leq i \leq k-1<j \leq k} \omega(\frac{x_i}{x_j}) \Big)=\Sym\Big( \prod_{1 \leq i \leq k-1} \frac{x_k}{x_k-x_i}\Big).
\end{split}
\end{align} where we use the induction hypothesis in the second equality.

We denote 
\begin{equation*}
g(x_1,...,x_k)=\prod_{1 \leq i \leq k-1} \frac{x_k}{x_k-x_i}=\frac{x^{k-1}_k}{(x_k-x_1)...(x_k-x_{k-1})}
\end{equation*} Note that $g(x_{\sigma(1)},...,x_{\sigma(k)})=g(x_1,...,x_k)$ for any $\sigma$ in the Young subgroup $\fS_{k-1} \times \fS_1 \leq \fS_k$. Let $\tau_1=e, \tau_2=(k-1,k),\tau_3=(k-2,k),...,\tau_k=(1,k)$ be the representatives in $\fS_k/(\fS_{k-1} \times \fS_1)$. Then, 
\begin{align*}
&\Sym(g(x_1,...,x_k))=(k-1)!\sum_{i=1}^{k} g(x_{\tau_i(1)},...,x_{\tau_i(k)}) \\
&=(k-1)!\Big[\frac{x^{k-1}_k}{(x_k-x_1)...(x_k-x_{k-1})}+\frac{x^{k-1}_{k-1}}{(x_{k-1}-x_1)...(x_{k-1}-x_k)}+...+\frac{x^{k-1}_1}{(x_1-x_k)...(x_1-x_{k-1})} \Big] \\
&=(k-1)!\frac{1}{\prod_{1 \leq i <j \leq k}(x_j-x_i)} \\
&\Big[x^{k-1}_k\prod_{1 \leq i<j \leq k-1}(x_j-x_i)-x^{k-1}_{k-1}\prod_{1\leq i<j\leq k,\ j \neq k-1 }(x_j-x_i)+....+(-1)^{k-1}x^{k-1}_1 \prod_{2\leq i<j\leq k}(x_j-x_i)\Big]\\
&=(k-1)!.
\end{align*} Thus, we obtain (\ref{induction}) is equal to $1$.
\end{proof}

The following result compute the shuffle products for degree one elements, see \cite[(6.1)]{N2} for a similar statement.

\begin{lemma}\label{lemmacalculation0'}
For all $r \geq 1$ and $a_1,...,a_r \in \ZZ$, we have the following equation
\begin{equation*}
    x^{a_1}_1 \ast ... \ast x^{a_r}_1=\Sym\Big(x^{a_1}_1...x^{a_r}_r \prod_{1 \leq i <j \leq r} \omega(\frac{x_i}{x_j}) \Big)
\end{equation*}
\end{lemma}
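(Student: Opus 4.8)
The plan is to argue by induction on $r$, using associativity of the shuffle product to peel off one factor at a time. Introduce the shorthand
\[
G_r \coloneqq x_1^{a_1}\cdots x_r^{a_r}\prod_{1\le i<j\le r}\omega\Big(\frac{x_i}{x_j}\Big),
\]
so that the assertion becomes $x_1^{a_1}\ast\cdots\ast x_1^{a_r}=\Sym_{\fS_r}(G_r)$, where $\Sym_{\fS_r}$ means the (unnormalized) sum over $\fS_r$ as in the statement preceding the lemma. The base case $r=1$ is immediate, since $\Sym_{\fS_1}$ is the identity and $G_1=x_1^{a_1}$. For the inductive step I would assume $x_1^{a_1}\ast\cdots\ast x_1^{a_{r-1}}=\Sym_{\fS_{r-1}}(G_{r-1})$, an element of $\Lambda_{r-1}$, and write $F_r$ for the full $r$-fold product, so that by associativity $F_r=\big(\Sym_{\fS_{r-1}}(G_{r-1})\big)\ast x_1^{a_r}$.

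Next I would apply the shuffle formula (\ref{shuffleproductA1}) with $m=r-1$ and $n=1$ to this last product. Since the only interaction factor between the first $r-1$ variables and $x_r$ is $\prod_{i=1}^{r-1}\omega(x_i/x_r)$, this gives
\[
F_r=\frac{1}{(r-1)!}\,\Sym_{\fS_r}\Big[\Sym_{\fS_{r-1}}(G_{r-1})\cdot x_r^{a_r}\prod_{i=1}^{r-1}\omega\Big(\frac{x_i}{x_r}\Big)\Big].
\]
The key observation is that the factor $x_r^{a_r}\prod_{i=1}^{r-1}\omega(x_i/x_r)$ is symmetric in $x_1,\dots,x_{r-1}$ and is fixed by the subgroup $\fS_{r-1}\le\fS_r$ (which permutes only the first $r-1$ indices). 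Hence it may be pulled inside the inner symmetrization, and because $G_{r-1}\cdot x_r^{a_r}\prod_{i=1}^{r-1}\omega(x_i/x_r)$ equals $G_r$ on the nose, we obtain $F_r=\frac{1}{(r-1)!}\,\Sym_{\fS_r}\big(\Sym_{\fS_{r-1}}(G_r)\big)$.

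It then remains to collapse the nested symmetrizations. Viewing $\fS_{r-1}$ as the stabilizer of the last index inside $\fS_r$, each $\rho\in\fS_r$ factors as $\tau\sigma$ with $\sigma\in\fS_{r-1}$ and $\tau\in\fS_r$ in exactly $(r-1)!$ ways, so $\Sym_{\fS_r}\circ\Sym_{\fS_{r-1}}=(r-1)!\,\Sym_{\fS_r}$; the prefactor cancels and yields $F_r=\Sym_{\fS_r}(G_r)$, completing the induction. I expect the only delicate point to be the bookkeeping of the two symmetrization operators—concretely, checking that the interaction factor is genuinely $\fS_{r-1}$-invariant (so that it can be absorbed into the inner $\Sym$) and that the composite of symmetrizations contributes precisely the $(r-1)!$ needed to cancel the normalization coming from the shuffle product.
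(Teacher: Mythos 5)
Your proof is correct and follows essentially the same route as the paper: induction on $r$, peeling off the last factor via associativity, applying the shuffle formula with $n=1$, and collapsing the nested symmetrizations. The paper performs the final collapse $\frac{1}{(r-1)!}\Sym_{\fS_r}\circ\Sym_{\fS_{r-1}}=\Sym_{\fS_r}$ in a single unexplained step, so your explicit justification of the $\fS_{r-1}$-invariance of the interaction factor and the counting of factorizations $\rho=\tau\sigma$ is simply a more careful write-up of the same argument.
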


\begin{proof}
We prove this by induction on $r$. Clearly, the equation holds for $r=1$, thus we check the case where $r=2$.
\begin{equation*}
    x_1^{a_1} \ast x^{a_2}_1 =\Sym\Big(x^{a_1}_1x_2^{a_2}\omega(\frac{x_1}{x_2}) \Big).
\end{equation*}

Now, we assume that the identity holds for $r \leq k-1$. Then, when $r=k$, we have 
\begin{align*}
x^{a_1}_1 \ast ... \ast x^{a_k}_1&=(x^{a_1}_1 \ast ... \ast x_1^{a_{k-1}}) \ast x^{a_k}_1=\Bigg\{\Sym\Big(x^{a_1}_1...x^{a_{k-1}}_{k-1}\prod_{1 \leq i <j \leq k-1} \omega(\frac{x_i}{x_j}) \Big) \Bigg\} \ast x^{a_k}_1 \\
&=\frac{1}{(k-1)!1!}\Sym\Bigg( \Sym\Big(x^{a_1}_1...x^{a_{k-1}}_{k-1}\prod_{1 \leq i <j \leq k-1} \omega(\frac{x_i}{x_j}) \Big)x_k^{a_k} \prod_{1 \leq i \leq k-1<j \leq k} \omega(\frac{x_i}{x_j}) \Bigg) \\
& =\Sym\Big( x^{a_1}_1...x^{a_{k-1}}_{k-1}x_k^{a_k}\prod_{1 \leq i <j \leq k} \omega(\frac{x_i}{x_j}) \Big).
\end{align*}
\end{proof}

Then, we have the following

\begin{lemma} \label{A1generatedegreeone}
The algebra $\KHA_{A_1}$ is generated by the degree one elements under the shuffle product $\ast$, i.e., for each $r \geq 0$, every element $f \in \Lambda_r$ can be written as a linear combination of $x^{d_1}_1 \ast ... \ast x^{d_r}_1$.
\end{lemma}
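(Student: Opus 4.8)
The plan is to reduce the statement to a classical fact about Schur (Laurent) polynomials. By Lemma \ref{lemmacalculation0'}, every shuffle product of degree-one elements has the closed form
\begin{equation*}
x_1^{a_1}\ast\cdots\ast x_1^{a_r}=\Sym\Big(x_1^{a_1}\cdots x_r^{a_r}\prod_{1\leq i<j\leq r}\omega\big(\tfrac{x_i}{x_j}\big)\Big),\qquad a_1,\dots,a_r\in\ZZ,
\end{equation*}
so it suffices to show that, as $(a_1,\dots,a_r)$ ranges over $\ZZ^r$, these elements span $\Lambda_r$. First I would rewrite the shuffle kernel. Since $\omega(x_i/x_j)=\frac{x_j}{x_j-x_i}$, counting factors gives
\begin{equation*}
\prod_{1\leq i<j\leq r}\omega\Big(\frac{x_i}{x_j}\Big)=\frac{\prod_{j=1}^r x_j^{\,j-1}}{(-1)^{\binom r2}\,V},\qquad V\coloneqq\prod_{1\leq i<j\leq r}(x_i-x_j).
\end{equation*}

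Next, because $V$ is alternating under $\fS_r$, I would pull the factor $1/V$ through the symmetrizer. Setting $c_j\coloneqq a_j+j-1$, this turns the shuffle product into a bialternant:
\begin{equation*}
x_1^{a_1}\ast\cdots\ast x_1^{a_r}=\frac{1}{(-1)^{\binom r2}\,V}\sum_{\sigma\in\fS_r}\mathrm{sgn}(\sigma)\,\sigma\Big(\prod_{j=1}^r x_j^{\,c_j}\Big)=\frac{\det\big(x_i^{\,c_j}\big)_{1\le i,j\le r}}{(-1)^{\binom r2}\,V}.
\end{equation*}
By the Weyl character formula in its Laurent form, this determinant-over-Vandermonde vanishes when two of the exponents $c_j$ coincide, and otherwise equals $\pm s_\lambda$, the Schur Laurent polynomial attached to the weakly decreasing $\lambda\in\ZZ^r$ obtained by sorting $\{c_j\}$ decreasingly and subtracting the staircase $(r-1,\dots,1,0)$.

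Finally I would invoke the standard fact that the Schur Laurent polynomials $\{s_\lambda\}$, indexed by weakly decreasing $\lambda\in\ZZ^r$, form a $\CC$-basis of $\Lambda_r=\CC[x_1^{\pm1},\dots,x_r^{\pm1}]^{\fS_r}$ (they are the characters of the irreducible rational $\GLL_r$-representations). Concretely, choosing $(a_1,\dots,a_r)$ weakly increasing makes $c_1<\cdots<c_r$ strictly increasing, hence distinct, so $x_1^{a_1}\ast\cdots\ast x_1^{a_r}=\pm s_{\lambda}$ with $\lambda=(a_r,\dots,a_1)$; letting $(a_1,\dots,a_r)$ run over all weakly increasing integer sequences realizes every $s_\lambda$ up to sign. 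Thus the shuffle products of degree-one elements already span $\Lambda_r$, proving the lemma (and in fact exhibiting a basis). The main point to get right is the closed-form rewriting of the kernel and pulling $1/V$ through $\Sym$, so that the negative exponents in the Laurent setting cause no trouble; once the expression is displayed as a bialternant, its identification with a basis of $\Lambda_r$ is immediate.
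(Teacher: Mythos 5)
Your proof is correct, but it takes a genuinely different route from the paper's. The paper proves the lemma by a direct absorption trick: it writes $f = f\cdot 1$, uses Lemma \ref{lemmacalculation0} together with Lemma \ref{lemmacalculation0'} to express $1 = 1\ast\cdots\ast 1 = \Sym\bigl(\prod_{i<j}\omega(x_i/x_j)\bigr)$, pulls the symmetric function $f$ inside the symmetrizer, and then reads off the monomial expansion of $f$ term by term as the desired linear combination of shuffle products (with the same coefficients $c_{d_1\cdots d_r}$). You instead convert each shuffle product $x_1^{a_1}\ast\cdots\ast x_1^{a_r}$ into a bialternant $\det(x_i^{c_j})/\det(x_i^{j-1})$ with $c_j = a_j+j-1$, identify it with $0$ or $\pm s_\lambda$, and invoke the classical fact that the Schur Laurent polynomials form a basis of $\Lambda_r$. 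Your computation checks out (the factor count $\prod_{i<j}\omega(x_i/x_j)=\prod_j x_j^{j-1}\big/\prod_{i<j}(x_j-x_i)$, the passage of the alternating Vandermonde through $\Sym$, and the shift $\lambda=(a_r,\dots,a_1)$ for weakly increasing $(a_j)$ are all right). The trade-off: the paper's argument is shorter and entirely self-contained given its two preparatory lemmas, while yours imports the Schur basis fact but extracts strictly more information --- it determines exactly which shuffle products vanish (coinciding $c_j$'s, which is the shuffle-side shadow of the relation $e_re_{r+1}=0$) and exhibits the nonvanishing ones as a basis, which anticipates the partition count $p_d(m)$ used later in the proof of Proposition \ref{comparisionbi-grdeddimension}.
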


\begin{proof}
Given $f \in \Lambda_r=\CC[x_1^{\pm 1},...,x_r^{\pm 1}]^{\fS_r}$, then $f$ can be written as a linear combination of monomials
\begin{equation*}
f=\sum_{d_1,...,d_r}c_{d_1...d_r}x^{d_1}_1...x^{d_r}_r
\end{equation*} for some $d_1,...,d_r \in \ZZ$ and constant $c_{d_1...d_r} \in \CC$.

Then, by Lemma \ref{lemmacalculation0} and Lemma \ref{lemmacalculation0'} with $a_1=...=a_r=0$, we have 
\begin{align*}
f&=f \cdot 1 =f(1 \ast 1 \ast ... \ast 1) =f\Sym\Big(\prod_{1 \leq i<j \leq r} \omega(\frac{x_i}{x_j})\Big) \\
&=\Sym\Big(f\prod_{1 \leq i<j \leq r} \omega(\frac{x_i}{x_j})\Big)=\Sym\Big((\sum_{d_1,...,d_r}c_{d_1...d_r}x^{d_1}_1...x^{d_r}_r)\prod_{1 \leq i<j \leq r} \omega(\frac{x_i}{x_j})\Big) \\
&=\sum_{d_1,...,d_r}c_{d_1...d_r}\Sym\Big((x^{d_1}_1...x^{d_r}_r)\prod_{1 \leq i<j \leq r} \omega(\frac{x_i}{x_j})\Big) \\
&=\sum_{d_1,...,d_r}c_{d_1...d_r}x^{d_1}_1 \ast ... \ast x^{d_r}_1
\end{align*} where we use the property that $f$ is symmetric in the fourth equality and Lemma \ref{lemmacalculation0'} in the last equality.


\end{proof}

\begin{remark}
For a related result, we refer the readers to \cite[Theorem 2.5]{N2}.
\end{remark}


Now, we move to the general $n$ case. To describe the shuffle product that generalize (\ref{shuffleproductA1}), we need to introduce a kernel that serves as the twist and generalizes $\omega(z)=\frac{1}{1-z}$. Let 
\begin{equation*}
\omega_{ij}(z)=\begin{cases}
\frac{1}{1-z} & \text{if} \ i=j, \\
1-z & \text{if} \ |i-j|=1, \\
1 & \text{if} \ |i-j| \geq 2.
\end{cases}
\end{equation*} 


Then, by Proposition \ref{Shuffle}, the shuffle product in $\KHA_{A_{n}}$ can be written as
\begin{align}\label{shuffleproductAn}
\begin{split}
&(f \ast g)(x_{ij}) \\
& = \frac{1}{|\fS_\alpha||\fS_{\beta}|} \Sym\Big[f(y_{ij})g(z_{ij})\prod_{i=1}^n \prod\limits_{\substack{1 \leq j \leq \alpha^i \\ \alpha^i+1 \leq j' \leq \alpha^i+\beta^i}} \omega_{ii}\Big(\frac{x_{ij}}{x_{ij'}} \Big) \prod_{|i-i'|=1}\prod\limits_{\substack{1 \leq j \leq \alpha^i \\ \alpha^{i'}+1 \leq j' \leq \alpha^{i'}+\beta^{i'}}} \omega_{ii'}\Big(\frac{x_{ij}}{x_{i'j'}}\Big) \Big] 
\end{split}
\end{align} for all $f \in K(\Rep_{\alpha}Q(A_n))=\CC[y^{\pm 1}_{ij}]^{\fS_{\alpha}}$ and $g \in K(\Rep_{\beta}Q(A_n))=\CC[z^{\pm 1}_{ij}]^{\fS_{\beta}}$, where $x_{ij}=y_{ij}$ for $1 \leq j \leq \alpha^i$ and $x_{i\alpha^i+j'}=z_{ij'}$ for $1 \leq j' \leq \beta^i$.

If we denote $\omega_i$ to be the dimension vector with $1$ in vertex $i$ and zero elsewhere, then considering the following 
\begin{equation*}
   R_i \coloneqq  \bigoplus_{r \geq 0} K(\Rep_{r\omega_i}Q(A_n)), \ 1 \leq i \leq n
\end{equation*} they are subalgebras of $\KHA_{A_n}$ that all isomorphic to $\KHA_{A_1}$. 


Then, we have the following result.

\begin{proposition} \label{PBWKHAAn}
The algebra $\KHA_{A_n}$ is generated by the $\omega_i$-graded piece, i.e., the $\ast$ multiplication (from left to right) induces isomorphism
\begin{equation*}
\mu:R_1 \otimes ... \otimes R_n \xrightarrow{\ast} \KHA_{A_n} 
\end{equation*}
\end{proposition}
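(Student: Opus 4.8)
The plan is to prove that $\mu$ is an isomorphism one dimension vector at a time, showing that in each multidegree it collapses to the tautological identification of invariant rings. Both $R_1\otimes\cdots\otimes R_n$ and $\KHA_{A_n}$ are graded by the dimension vector $\alpha=(\alpha^1,\dots,\alpha^n)$, and $\mu$ preserves this grading, so it suffices to fix $\alpha$ and examine the corresponding graded piece. There the source is $\bigotimes_{i=1}^{n}K(\Rep_{\alpha^i\omega_i}Q(A_n))\cong\bigotimes_{i=1}^{n}\CC[x_{i\bullet}^{\pm1}]^{\fS_{\alpha^i}}$, and the target is $K(\Rep_{\alpha}Q(A_n))\cong\CC[x_{ij}^{\pm1}]^{\fS_{\alpha}}$ with $\fS_{\alpha}=\prod_i\fS_{\alpha^i}$.

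The first and main step is to compute $\mu(f_1\otimes\cdots\otimes f_n)=f_1\ast\cdots\ast f_n$ for homogeneous $f_i\in R_i$ by iterating the shuffle formula of Proposition \ref{Shuffle}, showing inductively that each stage introduces no kernel. At the $k$-th stage one multiplies the accumulated product $g_k:=f_1\ast\cdots\ast f_k$, which by the inductive hypothesis equals the plain product $f_1\cdots f_k$ and is supported on vertices $1,\dots,k$, by the factor $f_{k+1}$ supported on vertex $k+1$. The diagonal factors $(1-x_{ij}x_{ij'}^{-1})^{-1}$ pair two variables attached to the same vertex and hence never occur, since $g_k$ and $f_{k+1}$ have disjoint vertex supports. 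The off-diagonal factors $(1-x_{ij}x_{i'j'}^{-1})^{c(i,i')}$ require an arrow from a first-factor vertex $i\le k$ to the second-factor vertex $i'=k+1$; but every arrow of $Q(A_n)$ points as $i+1\to i$, so $c(i,k+1)=0$ for $i\le k$ and all these factors equal $1$. The bracketed integrand is therefore just $g_k f_{k+1}$, already invariant under $\fS_{\deg g_k}\times\fS_{\deg f_{k+1}}$, so the symmetrization merely cancels the normalizing prefactor and returns $g_{k+1}=f_1\cdots f_{k+1}$. This completes the induction and gives $\mu(f_1\otimes\cdots\otimes f_n)=f_1\cdots f_n$, the plain product of Laurent polynomials.

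It then remains to recognize the resulting map $\bigotimes_i f_i\mapsto\prod_i f_i$ as an isomorphism. Because the variable blocks $\{x_{i\bullet}\}$ for distinct $i$ are disjoint and $\fS_{\alpha}=\prod_i\fS_{\alpha^i}$ acts blockwise, taking invariants is compatible with the factorization $\CC[x_{ij}^{\pm1}]=\bigotimes_i\CC[x_{i\bullet}^{\pm1}]$, giving a canonical isomorphism $\bigotimes_i\CC[x_{i\bullet}^{\pm1}]^{\fS_{\alpha^i}}\xrightarrow{\sim}\CC[x_{ij}^{\pm1}]^{\fS_{\alpha}}$ in each multidegree. Summing over all $\alpha$ shows that $\mu$ is a bijection, which is the desired isomorphism.

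I expect the only genuine content to lie in the vanishing of the cross-vertex kernels: this is exactly where the orientation of $Q(A_n)$ and the prescribed left-to-right order $R_1\ast\cdots\ast R_n$ must be used together. Had the product been taken in the reverse order, the arrow factors $1-x_{i+1,j}x_{ij'}^{-1}$ would survive, the image would land in the proper ideal they generate, and $\mu$ would fail to be surjective; so the ordering is essential rather than cosmetic. Everything else is a formal consequence of the shuffle presentation and the standard behaviour of invariants under products of symmetric groups.
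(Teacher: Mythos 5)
Your argument is correct and follows essentially the same route as the paper: both observe that for factors supported on distinct vertices, multiplied in the order $R_1,\dots,R_n$, every kernel factor in the shuffle formula is trivial (the same-vertex denominators because the vertex supports are disjoint, the arrow factors because $c(i,i')=0$ unless $i'=i-1$), so $\mu$ reduces to ordinary multiplication and hence to the tautological isomorphism $\bigotimes_i\CC[x_{i\bullet}^{\pm1}]^{\fS_{\alpha^i}}\cong\CC[x_{ij}^{\pm1}]^{\fS_\alpha}$ in each multidegree. Your writeup is in fact more explicit than the paper's (which compresses the key step into the phrase ``the conditions in the shuffle product are empty''), and your closing remark that the quiver orientation forces the left-to-right ordering is a correct observation the paper leaves implicit.
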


\begin{proof}
Given $f \in \KHA_{A_n}$, then $f$ can be written as a finite sum $f=\sum_{i=1}^k f_i$ where $f_i \in K(\Rep_{\alpha_i}Q(A_n)) \cong \CC[x^{\pm 1}_{ij}]^{\fS_{\alpha_i}}$ for all $1 \leq i \leq k$.

Let $\alpha_i=(\alpha_i^1,...,\alpha_i^n)$ for all $1 \leq i \leq k$. Then, we have $f_i=\sum_{r=1}^{l_i}\prod_{j=1}^ng^r_{ij}$ for some $l_i\geq 0$, where $g^r_{ij} \in \CC[x^{\pm 1}_{j1},...,x^{\pm 1}_{j\alpha_i^j}]^{\fS_{\alpha^j_i}}$. Then, we pick $\sum_{r=1}^{l_i}g^r_{i1} \otimes ... \otimes g^r_{in} \in R_1 \otimes ... \otimes R_n$. Since each $g^r_{ij}$ is already symmetric with respect to $\fS_{\alpha_i^j}$ and the conditions in the shuffle product in (\ref{shuffleproductAn}) is empty, we obtain 
\begin{equation*}
\sum_{r=1}^{l_i}g^r_{i1} \otimes ... \otimes g^r_{in} \mapsto \sum_{r=1}^{l_i}g^r_{i1} \ast  ... \ast g^r_{in} =\sum_{r=1}^{l_i}\prod_{j=1}^ng^r_{ij}=f_i.
\end{equation*} Extend this linearly, we obtain that it is surjective. Moreover, since the map sends tensor product of polynomials into usual product, it is injective. Thus it is an isomorphism.

\end{proof}

\begin{remark}
We refer the readers to \cite[Corollary 5.4]{T2} for a similar result, and \cite[Theorem 11.2]{Rim} for a result in the $\CoHA$ version.
\end{remark}

The following proves part of Theorem \ref{main result 1}.

\begin{proposition} \label{phisurjective}
The map $\phi_{n+1}$ in Theorem \ref{main result 1} is a surjective algebra homomorphism. 
\end{proposition}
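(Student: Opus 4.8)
The plan is to prove that $\phi_{n+1}\colon \fU_{n+1}^+ \to \KHA_{A_n}$ is a well-defined surjective algebra homomorphism in two stages: first verify that the defining relations (\ref{er}) are respected under $\phi_{n+1}(e_{i,r}) = x_{i,1}^{-r}$, so that $\phi_{n+1}$ is a well-defined homomorphism; then establish surjectivity by showing the images generate all of $\KHA_{A_n}$.

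For \textbf{well-definedness}, I would check each of the three families of relations in (\ref{er}) by direct computation with the shuffle formula (\ref{shuffleproductAn}). The generator $e_{i,r}$ maps to the degree-$\omega_i$ element $x_{i,1}^{-r}$, so a product $e_{i,r}\ast e_{j,s}$ lands in degree $\omega_i+\omega_j$ and is computed via Lemma \ref{lemmacalculation0'} (for $i=j$) or directly from (\ref{shuffleproductAn}) using the twist kernels $\omega_{ij}$. The first relation $e_{i,r}e_{i,s}=-e_{i,s-1}e_{i,r+1}$ should follow from the $\omega_{ii}(z)=\tfrac{1}{1-z}$ factor together with the antisymmetrization forced by the symmetrizer on two equal-colored variables; the second relation, involving adjacent vertices with $|i-i'|=1$, should emerge from the factor $\omega_{ii'}(z)=1-z$; and the third (disjoint support, $|i-j|\geq 2$) is immediate since $\omega_{ij}=1$ makes the shuffle product commute. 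These are exactly the ``explicit calculations of the shuffle product on degree one elements'' promised in the introduction.

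For \textbf{surjectivity}, I would invoke the two structural results already proved. By Proposition \ref{PBWKHAAn}, the multiplication induces an isomorphism $R_1\otimes\cdots\otimes R_n \xrightarrow{\ast} \KHA_{A_n}$, so it suffices to show each subalgebra $R_i\cong\KHA_{A_1}$ lies in the image of $\phi_{n+1}$. By Lemma \ref{A1generatedegreeone}, $\KHA_{A_1}$ is generated under $\ast$ by its degree-one elements $x_1^{d}$, and for the copy $R_i$ these correspond precisely to the elements $\phi_{n+1}(e_{i,-d}) = x_{i,1}^{d}$. Hence every generator of each $R_i$ is hit, and since the $R_i$ together generate $\KHA_{A_n}$, the map $\phi_{n+1}$ is surjective.

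The \textbf{main obstacle} I anticipate is the well-definedness verification of the second relation in (\ref{er}), the cross-relation between adjacent vertices $e_{i+1,s}e_{i,r}=e_{i,r}e_{i+1,s}-e_{i,r+1}e_{i+1,s-1}$: unlike the same-vertex and disjoint cases, this couples two distinct colors through the nontrivial kernel $1-z$, and one must carefully track how the symmetrizer acts across the two one-dimensional color blocks and how the shift $r\mapsto r+1$, $s\mapsto s-1$ is produced by expanding $(1-x_{i,j}x_{i',j'}^{-1})$. Getting the signs and the index bookkeeping right there is the delicate point; the remaining relations are comparatively routine once the two-variable computations of Lemmas \ref{lemmacalculation0} and \ref{lemmacalculation0'} are in hand.
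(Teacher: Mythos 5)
Your proposal is correct and follows essentially the same route as the paper: the paper likewise deduces surjectivity by combining Proposition \ref{PBWKHAAn} with Lemma \ref{A1generatedegreeone} to see that $\KHA_{A_n}$ is generated by the degree-one elements $x_{i,1}^{r}$, and then verifies the three families of relations in (\ref{er}) by the same direct two-variable shuffle computations you outline (the adjacent-vertex case indeed coming from expanding the kernel $1-x_{i+1,1}x_{i,1}^{-1}$). The delicate sign and index bookkeeping you flag is exactly what the paper carries out explicitly.
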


\begin{proof}


Note that, by Lemma \ref{A1generatedegreeone}, we know that $\KHA_{A_1}$ is generated by degree-one elements, i.e. $x^r \in K(\Rep_{1}Q(A_1))=\CC[x^{\pm 1}]$. Moreover, by Proposition \ref{PBWKHAAn}, $\KHA_{A_n}$ is generated by the $\omega_i$-graded piece subalgebra that isomorphic to $\KHA_{A_1}$. Thus, $\KHA_{A_n}$ is generated by those degree one elements $x^r_{i,1} \in K(\Rep_{\omega_i}Q(A_n))$, and $\phi_{n+1}$ maps the generators of $\fU_{n+1}^+$ to the generators of $\KHA_{A_n}$. 

It remains to check that $\phi$ is an algebra homomorphism, i.e., the relations in $\fU^+_{n+1}$ hold in $\KHA_{A_n}$.

Fix a vertex $1 \leq i \leq n$, consider $x_{i,1}^{-r}, \ x_{i,1}^{-s} \in K(\Rep_{\omega_i}Q(A_n))$, we have 
\begin{equation*}
 x_{i,1}^{-r} \ast x_{i,1}^{-s} =\frac{1}{1!1!} \Sym \biggl(x_{i,1}^{-r}x_{i,2}^{-s} \frac{1}{1-x_{i,1}x^{-1}_{i,2}} \biggr)=\frac{x_{i,1}^{-r}x_{i,2}^{-s}}{1-x_{i,1}x^{-1}_{i,2}}+\frac{x_{i,2}^{-r}x_{i,1}^{-s}}{1-x_{i,2}x^{-1}_{i,1}} =\frac{x_{i,1}^{-r}x_{i,2}^{-s+1}-x_{i,2}^{-r}x_{i,1}^{-s+1}}{x_{i,2}-x_{i,1}}.
\end{equation*} Similarly, 
\begin{equation*}
 x_{i,1}^{-s+1} \ast x_{i,1}^{-r-1} =\frac{1}{1!1!} \Sym \biggl(x_{i,1}^{-s+1}x_{i,2}^{-r-1} \frac{1}{1-x_{i,1}x^{-1}_{i,2}} \biggr)=\frac{x_{i,1}^{-s+1}x_{i,2}^{-r-1}}{1-x_{i,1}x^{-1}_{i,2}}+\frac{x_{i,2}^{-s+1}x_{i,1}^{-r-1}}{1-x_{i,2}x^{-1}_{i,1}} =\frac{x_{i,1}^{-s+1}x_{i,2}^{-r}-x_{i,2}^{-s+1}x_{i,1}^{-r}}{x_{i,2}-x_{i,1}}.
\end{equation*} Thus, we obtain $x_{i,1}^{-r} \ast x_{i,1}^{-s}=-x_{i,1}^{-s+1} \ast x_{i,1}^{-r-1}$, which corresponds to $e_{i,r}e_{i,s}=-e_{i,s-1}e_{i,r+1}$ (the first relation in (\ref{er})).

Next, fix two vertices $i$ and $i+1$ with an arrow from $i+1$ to $i$. Consider $x_{i,1}^{-r} \in  K(\Rep_{\omega_i}Q(A_n))$ and $x_{i+1,1}^{-s} \in  K(\Rep_{\omega_{i+1}}Q(A_n))$. Then, 
\begin{align*}
   x_{i+1,1}^{-s} \ast x_{i,1}^{-r} = \frac{1}{1!1!}\Sym \biggl( x_{i+1,1}^{-s}x_{i,1}^{-r}\frac{1-x_{i+1,1}x_{i,1}^{-1}}{1}\biggr)=x_{i+1,1}^{-s}x_{i,1}^{-r}-x_{i+1,1}^{-s+1}x_{i,1}^{-r-1}.
\end{align*} On the other hand, it is standard to calculate
\begin{equation*}
 x_{i,1}^{-r} \ast  x_{i+1,1}^{-s}=   x_{i,1}^{-r}x_{i+1,1}^{-s}, \ \text{and} \ x_{i,1}^{-r-1} \ast  x_{i+1,1}^{-s+1}=  x_{i,1}^{-r-1}x_{i+1,1}^{-s+1}.
\end{equation*} Thus, we obtain $ x_{i+1,1}^{-s} \ast x_{i,1}^{-r}=x_{i,1}^{-r} \ast  x_{i+1,1}^{-s}-x_{i,1}^{-r-1} \ast  x_{i+1,1}^{-s+1}$, which corresponds to the second relation $e_{i+1,s}e_{i,r}=e_{i,r}e_{i+1,s}-e_{i,r+1}e_{i+1,s-1}$ in (\ref{er}).

Finally, for two vertexes $i$ and $j$ such that there are no arrows between them, it is easy to check that $x_{i,1}^{-r} \ast  x_{j,1}^{-s}=x_{j,1}^{-s} \ast  x_{i,1}^{-r}$, and it corresponds to the third relation  in (\ref{er}). 

Hence, $\phi_{n+1}$ is a surjective algebra homomorphism.
\end{proof}

\subsubsection{Comparison of graded dimensions}
Since we already show that $\phi_{n+1}$ is surjective, to show that $\phi_{n+1}$ is an isomorphism, we need to compare the bi-graded dimensions on both sides of the map $\phi_{n+1}$.

Let us look at the $\KHA$ side first. Note that, by definition, $\KHA_{A_n}$ is graded by the dimension vector $\alpha=(\alpha^1,...,\alpha^n) \in \ZZ^n_{\geq 0}$. Moreover, from the description of the graded piece in (\ref{polyiso}), we know that, beside the grading by dimension vectors, we also have the internal/loop grading from the degree of the polynomials.

Then, for each $m \in \ZZ$, we define the following bi-graded piece of $\KHA_{A_n}$
\begin{equation*}
K(\Rep_{\alpha}Q(A_{n}))_m=\Span\bigg\{ \Sym\bigg(\prod_{1 \leq i \leq n}\prod_{1 \leq a \leq \alpha^i} x^{r_{i,a}}_{i,a}\bigg) \in \CC[x^{\pm 1}_{i,j}]^{\fS_{\alpha}} \ \bigg| \ \sum_{i=1}^n\sum_{a=1}^{\alpha^i} r_{i,a}=m \bigg\} 
\end{equation*} which is the subspace spanned by symmetric Laurent monomials of degree $m$.

On the other hand, for the 0-affine quantum group $\fU^+_{n+1}$, we know that $\fU^+_{n+1}$ is generated by $e_{i,r}$ where $1 \leq i \leq n$ and $r \in \ZZ$, subject to some quadratic relations. Thus, $\fU^+_{n+1}$ also has a bi-grading which is the same as the bi-grading of $\KHA_{A_n}$. More precisely, let
\begin{equation*}
    \deg(e_{i,r})=(\omega_i,r)
\end{equation*} where $\omega_i$ is the dimension vector with $1$ in vertex $i$ and zero elsewhere, and $r$ is the internal/loop grading. Then, for each $\alpha=(\alpha^1,...,\alpha^n) \in \ZZ^n_{\geq 0}$ and $m \in \ZZ$, $\fU^+_{n+1}$ has the following bi-graded piece
\begin{equation*}
(\fU^+_{n+1})_{\alpha,m}=\Span\bigg\{ e_{i_1,r_1}...e_{i_k,r_{|\alpha|}} \ \bigg| \ \sharp\{a\ |\ i_a=i\}=\alpha^i, \ \sum_{a=1}^{|\alpha|} r_a=m \bigg\} \bigg/  \bigg\langle\text{quadratic relations} \bigg\rangle
\end{equation*} where $|\alpha|=\sum_{i}\alpha^i$.

However, it is standard to see that both the spaces $K(\Rep_{\alpha}Q(A_{n}))_m$ and $(\fU^+_{n+1})_{\alpha,m}$ are infinite-dimensional. For example, consider the case where $n=1$, the dimension vector is $2$ and $m=0$. Then we have
\begin{align*}
    K(\Rep_2(Q(A_1))_0&=\Span\{ \Sym(x^{r_1}_1x^{r_2}_2) \ | \ r_1+r_2=0\} \\
    (\fU^+_{2})_{2,0}&=\Span\{e_{r_1}e_{r_2} \ | \ r_1+r_2=0 \} \bigg/ \bigg\langle\text{quadratic relations} \bigg\rangle
\end{align*} which implies that $r_1=-r_2$. There are infinite many choice of $r_1 \in \ZZ$ and the dimensions are infinite.

The above suggests that we have to restrict ourself to subalgebras of $\KHA_{A_n}$ and $\fU^+_{n+1}$. Thus, we will consider their non-positive or non-negative parts.

\begin{definition} \label{negativesectorKHA}
We define the negative sector, denoted by $\KHA^{\leq 0}_{A_n}$, to be the subalgebra of $\KHA_{A_n}$ that generated by degree-one elements with non-positive powers, i.e. $x^r_{i,j}$ with $r \leq 0$, under the shuffle product. More precisely,
\begin{equation*}
  \KHA^{\leq 0}_{A_n} \coloneqq \bigoplus_{\alpha} \CC[x^{-1}_{i,j}]^{\fS_{\alpha}} \subset \KHA_{A_n}.  
\end{equation*}
\end{definition}

\begin{definition} \label{positivesectorU+n}
We define the positive sector $\fU^{+,\geq 0}_{n+1}$ to be the subalgebra of $\fU^+_{n+1}$ that generated by $e_{i,r}$ with $r \geq 0$ subject to the relations (\ref{er}) whenever it makes sense.
\end{definition}

Then, both $\KHA^{\leq 0}_{A_n}$ and $\fU^{+,\geq 0}_{n+1}$ inherit the bi-grading from $\KHA_{A_n}$ and $\fU^+_{n+1}$ respectively. More precisely, their bigraded pieces are given by 
\begin{align} \label{bigradednegativeKHA_n}
(\KHA^{\leq 0}_{A_n})_{\alpha,m}&=\Span\bigg\{ \Sym\bigg(\prod_{1 \leq i \leq n}\prod_{1 \leq a \leq \alpha^i} x^{-r_{i,a}}_{i,a}\bigg)  \ \bigg| \ r_{i,a} \geq 0 \  \forall \ i, a, \ \text{and} \ \sum r_{i,a}=m \geq 0 \bigg\} \\
(\fU^{+,\geq 0}_{n+1})_{\alpha,m}&=\Span\bigg\{ e_{i_1,r_1}...e_{i_k,r_{|\alpha|}} \ \bigg| \ \sharp\{a\ |\ i_a=i\}=\alpha^i, \ r_a \geq 0, \ \sum_{a=1}^{|\alpha|} r_a=m \bigg\} \bigg/  \bigg\langle\text{quadratic relations} \bigg\rangle \label{bigradedpositivesectorUn+1}
\end{align}

\begin{proposition} \label{comparisionbi-grdeddimension}
We have $\dim(\KHA^{\leq 0}_{A_n})_{\alpha,m}=\dim(\fU^{+,\geq 0}_{n+1})_{\alpha,m}$.
\end{proposition}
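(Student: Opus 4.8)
The plan is to sandwich the two bigraded dimensions using the surjectivity that is already proved. Since $\phi_{n+1}(e_{i,r})=x^{-r}_{i,1}$ carries the generators $e_{i,r}$ (with $r\geq 0$) of $\fU^{+,\geq 0}_{n+1}$ exactly onto the generators $x^{-r}_{i,1}$ (with $-r\leq 0$) that generate $\KHA^{\leq 0}_{A_n}$ in the sense of Definition \ref{negativesectorKHA}, Proposition \ref{phisurjective} restricts to a bigraded surjective algebra homomorphism $\bar\phi\colon \fU^{+,\geq 0}_{n+1}\twoheadrightarrow \KHA^{\leq 0}_{A_n}$. Surjectivity in each bidegree already gives $\dim(\fU^{+,\geq 0}_{n+1})_{\alpha,m}\geq \dim(\KHA^{\leq 0}_{A_n})_{\alpha,m}$, so the whole content is the reverse inequality. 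For that it suffices to exhibit a spanning set $\mathcal{B}_{\alpha,m}$ of $(\fU^{+,\geq 0}_{n+1})_{\alpha,m}$ whose $\bar\phi$-image is linearly independent: then $\dim(\fU^{+,\geq 0}_{n+1})_{\alpha,m}\leq |\mathcal{B}_{\alpha,m}|=|\bar\phi(\mathcal{B}_{\alpha,m})|\leq \dim(\KHA^{\leq 0}_{A_n})_{\alpha,m}$, forcing equality throughout.

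To build $\mathcal{B}_{\alpha,m}$ I would straighten an arbitrary monomial $e_{i_1,r_1}\cdots e_{i_k,r_k}$ with all $r_a\geq 0$ using the three families in (\ref{er}): relation three commutes generators at non-adjacent vertices, relation two $e_{i+1,s}e_{i,r}=e_{i,r}e_{i+1,s}-e_{i,r+1}e_{i+1,s-1}$ moves a vertex-$(i+1)$ letter to the right of a vertex-$i$ letter, and relation one sorts the letters inside a single vertex. The within-vertex sorting is precisely the mechanism behind Lemma \ref{A1generatedegreeone}, and the across-vertex ordering is the sector analogue of the increasing-in-vertex factorization of Proposition \ref{PBWKHAAn}; together they reduce any monomial to products that are increasing in the vertex index and, inside each vertex block, sorted into a partition. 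On the K-theory side such a fully sorted product maps under $\bar\phi$ to the plain symmetric Laurent monomial $\Sym\!\big(\prod_{i,a} x^{-\lambda^{(i)}_a}_{i,a}\big)$, because in increasing vertex order every twist $\omega_{ii'}$ in the shuffle formula (\ref{shuffleproductAn}) is trivial; distinct partition-tuples give distinct monomial symmetric functions, which are linearly independent.

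The step I expect to be the main obstacle is the failure of relation two at the boundary. Inside $\fU^{+,\geq 0}_{n+1}$ the identity $e_{i+1,s}e_{i,r}=e_{i,r}e_{i+1,s}-e_{i,r+1}e_{i+1,s-1}$ only ``makes sense'' when $s\geq 1$, so a degree-zero letter $e_{i+1,0}$ cannot in general be pushed past a lower-vertex letter and the rewriting stalls. Hence $\mathcal{B}_{\alpha,m}$ must be enlarged beyond the fully sorted monomials by the boundary monomials produced by these immovable degree-zero letters; already in bidegree $(\omega_i+\omega_{i+1},m)$ the sorted monomials $e_{i,r}e_{i+1,s}$ with $r+s=m$ number $m+1$, while $e_{i+1,0}e_{i,m}$ is a genuine extra one. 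The two things that must be checked carefully are that this enlarged standard set still spans (the rewriting terminates and no further instance of (\ref{er}) applies to a boundary monomial) and that $\bar\phi$ keeps the whole set independent. The latter is where the boundary terms pay off: a stalled product such as $x^{0}_{i+1,1}\ast x^{-m}_{i,1}=x^{-m}_{i,1}-x_{i+1,1}x^{-m-1}_{i,1}$, computed from (\ref{shuffleproductAn}), carries a strictly positive power of $x_{i+1,1}$ that no sorted product can produce, so it contributes exactly the extra independent direction of $\KHA^{\leq 0}_{A_n}$ needed to match $\fU^{+,\geq 0}_{n+1}$. Organizing this termination and the bookkeeping of boundary monomials for general $\alpha$, and matching them bijectively with the positive-power shuffle elements, is the combinatorial heart of the argument.
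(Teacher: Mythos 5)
Your overall strategy coincides with the paper's: use the surjection of Proposition \ref{phisurjective} for one inequality, then bound $\dim(\fU^{+,\geq 0}_{n+1})_{\alpha,m}$ from above by counting a straightened spanning set whose $\phi_{n+1}$-images are symmetrized monomials. The paper implements the second step with a filtration by the number of inversions: the Serre relation kills all higher filtration pieces, so $\fU^{+,\geq 0}_{n+1}$ is spanned by vertex-sorted words, and the paper then asserts that these sorted words may be taken with all loop degrees $\geq 0$, giving exactly $\sum_{m=m_1+\cdots+m_n}\prod_i p_{\alpha^i}(m_i)$ elements, matching the count of symmetrized Laurent monomials on the $\KHA$ side.

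The boundary phenomenon you isolate is precisely the point at which that assertion is unjustified, and your example is correct: in $\fU^{+}_{n+1}$ one has $e_{i+1,0}e_{i,m}=e_{i,m}e_{i+1,0}-e_{i,m+1}e_{i+1,-1}$, so sorting a word containing the letter $e_{i+1,0}$ produces a sorted word with a negative loop degree, and $e_{i+1,0}e_{i,m}$ does not lie in the span of $\{e_{i,r}e_{i+1,s}\ |\ r,s\geq 0,\ r+s=m\}$ --- its image $x^{-m}_{i,1}-x_{i+1,1}x^{-m-1}_{i,1}$ contains a monomial with a positive power of $x_{i+1,1}$. Consequently $\dim(\fU^{+,\geq 0}_{n+1})_{\omega_i+\omega_{i+1},m}=m+2$, whereas the displayed formula in Definition \ref{negativesectorKHA} and (\ref{bigradednegativeKHA_n}) gives $\dim(\KHA^{\leq 0}_{A_n})_{\omega_i+\omega_{i+1},m}=m+1$. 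So with the paper's literal definition of the negative sector as $\bigoplus_\alpha\CC[x^{-1}_{i,j}]^{\fS_\alpha}$, the stated equality fails for $n\geq 2$, and your ``extra independent direction'' $x_{i+1,1}x^{-m-1}_{i,1}$ does not live in that space at all; the comparison (and what Proposition \ref{injectivityphi} actually needs) only makes sense if $\KHA^{\leq 0}_{A_n}$ is read as the subalgebra \emph{generated} by the $x^{-r}_{i,1}$ with $r\geq 0$, which is strictly larger. Granting that reading, your plan is the right repair, but as written it is a program rather than a proof: the termination of the rewriting in the presence of immovable degree-zero letters, the enumeration of the resulting boundary monomials for a general dimension vector $\alpha$ (where stalled letters at several adjacent vertices interact), and the linear independence of their shuffle images are all left open and are not routine. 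In short: same skeleton as the paper, a correctly identified obstruction that the paper's own proof passes over in silence, but no completed argument on either side of the comparison.
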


\begin{proof}
We prove the case $n=1$ first. When $n=1$, the dimension vector is a non-negative integer $\alpha=d$ and we have the bi-graded pieces are
\begin{align*}
(\KHA^{\leq 0}_{A_1})_{d,m}&=\Span \{ \Sym(x^{-r_1}_1...x^{-r_d}_d) \ | \ r_i \geq 0 \ \forall \ i,\ \text{and} \ \sum r_i=m \geq 0 \},   \\
(\fU^{+,\geq 0}_{2})_{d,m}&=\Span \{ e_{r_1}...e_{r_d} \ | \ r_i \geq 0 \ \forall \ i,\ \text{and} \ \sum r_i=m \geq 0 \}/\langle e_re_s+e_{s-1}e_{r+1} \rangle.
\end{align*}

Note that a symmetrized monoimal is determined (up to scalar) by the multiset $\{r_1,...,r_d\}$. After sorting it, without loss of generality, we can assume $r_1 \geq ... \geq r_d \geq 0$, which means it is a partition of $m$ into at most $d$ parts. So 
\begin{equation*}
\dim(\KHA^{\leq 0}_{A_1})_{d,m}=p_d(m)
\end{equation*} where $p_d(m)$ denote the number of partitions of $m$ into at most $d$ parts. 

Now, we have to show that $\dim(\fU^{+,\geq 0}_{2})_{d,m}=p_m(d)$. In order to show this, we show that the bi-graded space $(\fU^{+,\geq 0}_{2})_{d,m}$ is spanned by 
\begin{equation*}
A=\{ e_{r_1}...e_{r_d} \ | \ r_1 \geq ... \geq r_d \geq 0,\ \text{and} \ \sum r_i=m \geq 0 \},
\end{equation*} i.e., all the other elements can be reduced to elements with weakly decreasing indices by the quadratic relation.

We have to look at the quadratic relation. Observe that when $r<s$ or $s-r \geq 1$, we have the following two cases. First, if $s-r=1$, or equivalently $s=r+1$, then the quadratic relation gives $e_re_{r+1}=-e_re_{r+1}$ which implies that $e_re_{r+1}=0$ for all $r \in \ZZ$. Second, when $s-r \geq 2$, then it gives $e_re_s=-e_{s-1}e_{r+1}$ with $s-1 \geq r+1$. 

Thus, for a general element $e_{r_1}...e_{r_d}$, if it is already in $A$, then we are done. If not, then there exists $1 \leq i \leq d-1$ such that $r_i<r_{i+1}$. Then, we can apply the quadratic relation to get either $e_{r_1}...e_{r_d}=0$ or $e_{r_1}...e_{r_d}=-e_{r_1}...e_{r_{i+1}-1}e_{r_{i}+1}...e_{r_d}$. Continuing this process, $e_{r_1}...e_{r_d}$ will eventually reduced to an element in $A$. 

Clearly, the elements in $A$ are linearly independent. So, $A$ is a basis for $(\fU^{+,\geq 0}_{2})_{d,m}$ and $\dim(\fU^{+,\geq 0}_{2})_{d,m}=p_d(m)$.

Now, we move to the general case. Given a dimension vector $\alpha$ and $m \geq 0$. For the $\KHA$ side, from the condition $\sum_{i=1}^n\sum_{a=1}^{\alpha^i}r_{i,a}=m$ in (\ref{bigradednegativeKHA_n}), we know that $m$ has a composition $m=\sum_{i=1}^nm_i$ where $m_i=\sum_{a=1}^{\alpha^i}r_{i,a}$ for all $1 \leq i \leq n$. Moreover, since the elements are symmetrized monomials, from the study of the $n=1$ case, we can assume $r_{i,1} \geq ... \geq r_{i,\alpha^i} \geq 0$ for all $1 \leq i \leq n$. Thus, the number of the basis elements is given by the sum of number of ways decompose $m$ into compositions $\sum_{i=1}^{n}m_i$ together with partition of each $m_i$ into at most $\alpha^i$ parts, i.e.,
\begin{equation*}
\dim(\KHA^{\leq 0}_{A_n})_{\alpha,m}=\sum_{m=m_1+...+m_n}\prod_{i=1}^n p_{\alpha^i}(m_i).
\end{equation*}

Next, for the 0-affine quantum group $\fU^+_{n+1}$, since we are in the general case and the proof for $n=1$ is done, we can assume $n \geq 2$. Then, we have more quadratic relations, see (\ref{er}). For each word $w=e_{i_1,r_1}....e_{i_d,r_d} \in \fU^+_{n+1}$, we define the \textit{number of inversions} to be 
\begin{equation*}
\Inv(w) \coloneqq \sharp\{(a,b) \ | \ a<b, \ \text{and} \ i_a>i_b \}.
\end{equation*} For example, $\Inv(e_{1,r}e_{2,s})=0$, $\Inv(e_{2,r}e_{1,s})=1$, and $\Inv(e_{2,r_1}e_{3,r_2}e_{1,r_3})=2$. 

The number of inversions induces a filtration $F_0 \subset F_1 \subset ... \subset \fU^+_{n+1}$, where $F_k$ is defined to be
\begin{equation*}
  F_k=\Span\{  w=e_{i_1,r_1}....e_{i_d,r_d} \in \fU^+_{n+1} \ | \ \Inv(w) \leq k\}
\end{equation*} for all $k \geq 0$. Note that when $k=0$, the $F_0$ is spanned by words of the form $e_{1,r_{1,1}}...e_{1,r_{1,d_1}}...e_{n,r_{n,1}}...e_{n,r_{n,d_n}}$.

Taking the associated graded, we have $\gr\fU^+_{n+1}=\bigoplus_{k} F_k/F_{k-1}$. Observe that, the Drinfeld-Serre relation 
\begin{equation*}
e_{i+1,s}e_{i,r}=e_{i,r}e_{i+1,s}-e_{i,r+1}e_{i+1,s-1} 
\end{equation*} in (\ref{er}) shows that every word with at least one number of inversions can be written as a linear combinations of words with with smaller number of inversions. Thus we have $F_k/F_{k-1}=0$ for all $k \geq 1$, and $\gr\fU^+_{n+1}=F_0$. 

By restricting to the subalgebra $\fU^{+,\geq 0}_{n+1}$, we also obtain a filtration $G_0 \subset G_1 \subset ... \subset \fU^{+,\geq 0}_{n+1}$ where $G_k=F_k \cap \fU^{+,\geq 0}_{n+1}$ for all $k \geq 0$. Taking the associated graded, we obtain $\gr\fU^{+,\geq 0}_{n+1}=\bigoplus_{k}G_k/G_{k-1}=G_0=F_0\cap \fU^{+,\geq 0}_{n+1}$ since $F_k/F_{k-1}=0$ for all $k \geq 1$. Then, we have the bi-graded piece is given by
\begin{align*}
 (\gr\fU^{+,\geq 0}_{n+1})_{\alpha,m} &=F_0\cap (\fU^{+,\geq 0}_{n+1})_{\alpha,m}\\
 &=\Span\{  e_{1,r_{1,1}}...e_{1,r_{1,\alpha^1}}...e_{n,r_{n,1}}...e_{n,r_{n,\alpha^n}} \ | \ r_{i,a} \geq 0 \ \forall \ i,a, \ \text{and} \ \sum r_{i,a}=m \}.
\end{align*} Moreover, as we prove the case $n=1$, we can use the quadratic relation $e_{i,r}e_{i,s}=-e_{i,s-1}e_{i,r+1}$ to deduce that $(\gr\fU^{+,\geq 0}_{n+1})_{\alpha,m}$ is spanned by 
\begin{equation*}
 \{  e_{1,r_{1,1}}...e_{1,r_{1,\alpha^1}}...e_{n,r_{n,1}}...e_{n,r_{n,\alpha^n}} \ | \ r_{i,1} \geq ... \geq r_{i,\alpha^i} \geq 0 \ \forall \ i, \ \text{and} \ \sum r_{i,a}=m \}   
\end{equation*} which has the same cardinality as the case in $(\KHA^{\leq 0}_{A_n})_{\alpha,m}$, i.e., $\dim(\gr\fU^{+,\geq 0}_{n+1})_{\alpha,m}=\dim(\KHA^{\leq 0}_{A_n})_{\alpha,m}=\sum_{m=m_1+...+m_n}\prod_{i=1}^n p_{\alpha^i}(m_i).$

Since we prove $\phi_{n+1}$ is a surjection in Theorem \ref{main result 1}, from the definition $\phi_{n+1}(e_{i,r})=x^{-r}_{i,1}$, clearly it restricts to a surjection $\phi_{n+1}|_{ \fU^{+,\geq 0}_{n+1}}:\fU^{+,\geq 0}_{n+1} \twoheadrightarrow \KHA^{\leq 0}_{A_n}$. So
\begin{equation*}
\dim(\KHA^{\leq 0}_{A_n})_{\alpha,m} \leq \dim(\fU^{+,\geq 0}_{n+1} )_{\alpha,m} \leq \dim(\gr\fU^{+,\geq 0}_{n+1})_{\alpha,m}
\end{equation*} and we conclude that $\dim(\KHA^{\leq 0}_{A_n})_{\alpha,m}=\dim(\fU^{+,\geq 0}_{n+1} )_{\alpha,m}$
\end{proof}

From Proposition \ref{phisurjective} and Proposition \ref{comparisionbi-grdeddimension}, we obtain the following algebra isomorphism
\begin{equation*}
    \phi_{n+1}|_{ \fU^{+,\geq 0}_{n+1}}: \fU^{+,\geq 0}_{n+1} \xrightarrow{\simeq} \KHA^{\leq 0}_{A_n}
\end{equation*} which gives isomorphism of subalgebras in $\fU^{+}_{n+1}$ and $\KHA^{\leq 0}_{A_n}$. In order to extend $\phi_{n+1}|_{ \fU^{+,\geq 0}_{n+1}}$ to a full isomorphism $\phi_{n+1}$, we introduce the following degree-shift automorphism
\begin{align*}
    \tau_k&:\fU^+_{n+1}\rightarrow \fU^+_{n+1},\ \tau_k(e_{i,r})=e_{i,r+k}, \ \forall \ i, \ \text{and} \ r,k\in \ZZ, \\
    \eta_k&:\KHA_{A_{n}}\rightarrow \KHA_{A_n},\ \eta_k(x^{r}_{i,1})=x^{r-k}_{i,1},\ \forall \ i, \ \text{and} \ r, k\in \ZZ. 
\end{align*} It is clear to see that $\phi_{n+1} \circ \tau_k=\eta_k  \circ \phi_{n+1}$ for all $k \in \ZZ$.

The following result proves Theorem \ref{main result 1}.

\begin{proposition} \label{injectivityphi}
The map $\phi_{n+1}$ in Theorem \ref{main result 1} is injective.
\end{proposition}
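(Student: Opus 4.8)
The plan is to bootstrap from the isomorphism $\phi_{n+1}|_{\fU^{+,\geq 0}_{n+1}}$ already in hand, using the degree-shift automorphisms $\tau_k$ and $\eta_k$ to carry injectivity from the positive sector to the whole algebra. The starting point is that $\phi_{n+1}$ respects the bi-gradings: since $\phi_{n+1}(e_{i,r})=x^{-r}_{i,1}$ has bi-degree $(\omega_i,-r)$, the map sends the bi-graded piece $(\fU^+_{n+1})_{\alpha,m}$ into $K(\Rep_{\alpha}Q(A_n))_{-m}$. Hence $\ker\phi_{n+1}$ is a bi-graded subspace, and it suffices to prove injectivity on each bi-homogeneous piece $(\fU^+_{n+1})_{\alpha,m}$ separately.

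So first I would fix $\alpha$ and $m$ and take a bi-homogeneous $u\in(\fU^+_{n+1})_{\alpha,m}$ with $\phi_{n+1}(u)=0$, aiming to show $u=0$. Writing $u$ as a finite linear combination of monomials $e_{i_1,r_1}\cdots e_{i_d,r_d}$ with $d=|\alpha|$ and $\sum_a r_a=m$, I let $r_{\min}$ be the smallest index appearing in this expression and choose any integer $k\geq -r_{\min}$. Then every shifted index $r_a+k$ is non-negative, so $\tau_k(u)$, computed through this representative, is a linear combination of products of generators $e_{i,s}$ with $s\geq 0$, and therefore lies in the positive sector $\fU^{+,\geq 0}_{n+1}$ of Definition \ref{positivesectorU+n}.

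Next I would invoke the compatibility $\phi_{n+1}\circ\tau_k=\eta_k\circ\phi_{n+1}$, which gives $\phi_{n+1}(\tau_k(u))=\eta_k(\phi_{n+1}(u))=\eta_k(0)=0$. Since $\tau_k(u)\in\fU^{+,\geq 0}_{n+1}$ and the restriction $\phi_{n+1}|_{\fU^{+,\geq 0}_{n+1}}:\fU^{+,\geq 0}_{n+1}\xrightarrow{\simeq}\KHA^{\leq 0}_{A_n}$ is an isomorphism (in particular injective) by Proposition \ref{comparisionbi-grdeddimension}, it follows that $\tau_k(u)=0$. Finally, because $\tau_k$ is an algebra automorphism of $\fU^+_{n+1}$ — which one checks by observing that each relation in (\ref{er}) is stable under the uniform shift $r\mapsto r+k$ — it is invertible with inverse $\tau_{-k}$, whence $u=\tau_{-k}(\tau_k(u))=0$. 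This yields injectivity on every bi-homogeneous piece, and combined with Proposition \ref{phisurjective} completes the proof of Theorem \ref{main result 1}.

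I expect the only real subtlety to be the claim that a fixed bi-homogeneous element can be shifted entirely into the positive sector, i.e. that $\tau_k$ maps $(\fU^+_{n+1})_{\alpha,m}$ into $\fU^{+,\geq 0}_{n+1}$ for $k$ large. This rests on the fact that all monomials contributing to a single bi-homogeneous piece share the same length $|\alpha|$, so one uniform shift pushes every index above zero at once; one must simply take care that this is a statement about the element $u$ and not a particular monomial, which is automatic once we note that $\tau_k$ is a well-defined automorphism and may be evaluated on any chosen representative.
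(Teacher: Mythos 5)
Your proposal is correct and follows essentially the same route as the paper: reduce to bi-homogeneous elements of the kernel, shift a fixed bi-homogeneous element into the non-negative sector via $\tau_k$ for $k$ large, and conclude from the intertwining relation $\phi_{n+1}\circ\tau_k=\eta_k\circ\phi_{n+1}$ together with the injectivity of $\phi_{n+1}|_{\fU^{+,\geq 0}_{n+1}}$ established in Proposition \ref{comparisionbi-grdeddimension}. Your added justification that one uniform shift $k\geq -r_{\min}$ suffices (since all monomials in a bi-homogeneous piece have the same length $|\alpha|$) is a detail the paper leaves implicit, but the argument is the same.
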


\begin{proof}
We show that $\ker(\phi_{n+1})=0$. Given $x \in \ker(\phi_{n+1})$, we have $x \in \fU^+_{n+1}$ such that $\phi_{n+1}(x)=0$. 

Since $\fU^+_{n+1}$ is bi-graded, i.e., $\fU^+_{n+1}=\bigoplus_{\alpha}\bigoplus_m(\fU^+_{n+1})_{\alpha,m}$, $x$ can be written as a finite sum $x=\sum_{\alpha}\sum_{m}x_{\alpha,m}$ with $x_{\alpha,m} \in (\fU^+_{n+1})_{\alpha,m}$ for some $\alpha \in \ZZ^n_{\geq 0}$, $m \in \ZZ$.

Since $\phi_{n+1}$ is a graded algebra homomorphism, i.e., $\phi_{n+1}((\fU^+_{n+1})_{\alpha,m}) \subset (\KHA_{A_n})_{\alpha,m}$, we have $0=\phi_{n+1}(x)=\sum_{\alpha}\sum_{m}\phi_{n+1}(x_{\alpha,m})$ with each $\phi_{n+1}(x_{\alpha,m})$ in different bi-graded component. Thus, $\phi_{n+1}(x_{\alpha,m})=0$ for all $\alpha$ and $m$. 

It suffices to show that $x_{\alpha,m}=0$ for all $\alpha$ and $m$ such that $x=0$. For each $\alpha$ and $m$, we choose $k$ large enough and apply the degree-shift automorphism so that $\tau_{k}(x_{\alpha,m}) \in \fU^{+,\geq 0}_{n+1}$. Then we have 
\begin{equation*}
0=\eta_k(\phi_{n+1}(x_{\alpha,m}))=\phi_{n+1}|_{ \fU^{+,\geq 0}_{n+1}}(\tau_{k}(x_{\alpha,m}) ).
\end{equation*} Since $\phi_{n+1}|_{ \fU^{+,\geq 0}_{n+1}}$ is injective and $\tau_k$ is an automorphism, we have $\tau_{k}(x_{\alpha,m})=0$ and thus $x_{\alpha,m}=0$. 

\end{proof}

\subsection{Definition of the categorical action}

Roughly speaking, a categorical action of a Kac-Moody Lie algebra $\cg$ consists of a collection of functors and categories that recover actions of Chevalley generators at the level of Grothendieck groups. Under this philosophy, we propose the following definition of categorical action of $\fU^+_n$, or simply called \textit{categorical $\fU^+_n$--action}. 

Recall the notations $\kk$ and $C(n,N)$ in the previous section. Moreover, we denote $\alpha_i=(0,...,-1,1,...,0)$ to be the simple root, where $1 \leq i \leq n-1$ and $-1$ is at the $i$th position.

\begin{definition} \label{catactkha_n}
A \textit{categorical $\fU^+_{n}$-action} consists of a target 2-category $\Kk$, which is triangulated, $\CC$-linear and idempotent complete. The objects in $\Kk$ are
\begin{equation*}
\mathrm{Ob}(\Kk)=\{\Kk(\kk)\ |\ \kk \in C(n,N) \}
\end{equation*} where each $\Kk(\kk)$ is also a triangulated category, and each Hom space $\mathrm{Hom}(\Kk(\kk),\Kk(\Ll))$ is also  triangulated. On those objects $\Kk(\kk)$ we impose the following 1-morphisms:
\begin{equation*}
\bo_{\kk}:\Kk(\kk) \rightarrow \Kk(\kk), \ {\E}_{i,r}\bo_{\kk}=\bo_{\kk-\alpha_{i}}{\E}_{i,r}:\Kk(\kk) \rightarrow \Kk(\kk-\alpha_i),  
\end{equation*}where $r \in \ZZ$. 

On this data, we impose the following condition 
\begin{enumerate}
    \item For each $r \in \ZZ$, ${\E}_{i,r}\bo_{\kk}$ admits left and right adjoint functors $({\E}_{i,r}\bo_{\kk})^L={\E}^L_{i,r}\bo_{\kk-\alpha_i}$, $({\E}_{i,r}\bo_{\kk})^R={\E}^R_{i,r}\bo_{\kk-\alpha_i}$, respectively.
    \item The functors ${\E}_{i,r}\bo_{\kk}$ satisfy 
    \begin{enumerate}
        \item \begin{equation*}
    {\E}_{i,r}{\E}_{i,s}\bo_{\kk} \cong \begin{cases}
    {\E}_{i,s-1}{\E}_{i,r+1}\bo_{\kk}[1] & \text{if} \ r-s \geq 0 \\
    0 & \text{if} \ r-s=-1 \\
    {\E}_{i,s-1}{\E}_{i,r+1}\bo_{\kk}[-1] & \text{if} \ r-s \leq -2. 
    \end{cases}
    \end{equation*}
    \item ${\E}_{i,r}, {\E}_{i+1,s}$ are related by the following exact triangle
    \begin{equation*}
    {\E}_{i,r+1}{\E}_{i+1,s-1}\bo_{\kk} \rightarrow {\E}_{i,r}{\E}_{i+1,s}\bo_{\kk} \rightarrow {\E}_{i+1,s}{\E}_{i,r}\bo_{\kk} .  
    \end{equation*}
    \item 
    \begin{equation*}
    {\E}_{i,r}{\E}_{j,s}\bo_{\kk} \cong {\E}_{j,s}{\E}_{i,r}\bo_{\kk},  \ \text{if} \ |i-j| \geq 2.
    \end{equation*}
    \end{enumerate}
    
    \item There are exact triangles
    \begin{align*}
    &{\E}_{i,r}{\E}^R_{i,r}\bo_{\kk} \rightarrow \bo_{\kk} \rightarrow {\E}^R_{i,r-1}{\E}_{i,r-1}\bo_{\kk} \\
    &{\E}^L_{i,r-1}{\E}_{i,r-1}\bo_{\kk} \rightarrow \bo_{\kk} \rightarrow {\E}_{i,r}{\E}^L_{i,r}\bo_{\kk}
    \end{align*} for all $r \in \ZZ$.
    \item The relations between ${\E}_{i,r}$ and ${\E}^R_{j,s}$, ${\E}^L_{j,s}$ are given by
    \begin{enumerate} 
    \item When $i=j$, there is a so-called shifted condition.
    \begin{align*}
    {\E}_{i,r}{\E}^R_{i,s}\bo_{\kk} &\cong {\E}^R_{i,s-1}{\E}_{i,r-1}\bo_{\kk}[-1], \ \Rm{if} \ 1 \leq r-s \leq k_i+k_{i+1}-1, \\
    {\E}_{i,r}{\E}^L_{i,s}\bo_{\kk} &\cong {\E}^L_{i,s-1}{\E}_{i,r-1}\bo_{\kk}[1], \ \Rm{if} \ -k_i-k_{i+1}+1 \leq r-s \leq -1.
    \end{align*}
    \item When $|i-j|=1$, we have 
    \begin{align*}
        {\E}_{i,r}{\E}^R_{i+1,s}\bo_{\kk} \cong {\E}^R_{i+1,s+1}{\E}_{i,r+1}\bo_{\kk}[1], \ &{\E}_{i,r}{\E}^R_{i-1,s}\bo_{\kk} \cong {\E}^R_{i-1,s}{\E}_{i,r}\bo_{\kk} \\
        {\E}_{i,r}{\E}^L_{i+1,s}\bo_{\kk} \cong {\E}^L_{i+1,s}{\E}_{i,r}\bo_{\kk}, \ &{\E}_{i,r}{\E}^L_{i-1,s}\bo_{\kk} \cong {\E}^L_{i-1,s+1}{\E}_{i,r+1}\bo_{\kk}[-1].
    \end{align*}
    
    \item ${\E}_{i,r}{\E}^R_{j,s}\bo_{\kk} \cong {\E}^R_{j,s}{\E}_{i,r}\bo_{\kk}$, ${\E}_{i,r}{\E}^L_{j,s}\bo_{\kk} \cong {\E}^L_{j,s}{\E}_{i,r}\bo_{\kk}$ for all $|i-j| \geq 2$.
    \end{enumerate}
\end{enumerate}
\end{definition}

We give a few remarks about this definition.

\begin{remark}
The condition (2) can be viewed as a lift/categorification of the relation (\ref{er}) for the generators $e_{i,r}$ in Definition \ref{0affinequantumgroup}.
\end{remark}

\begin{remark}
Denote ${\TTt}_{i,r}\bo_{\kk}={\E}_{i,r}{\E}^R_{i,r}\bo_{\kk}$ and ${\SSs}_{i,r}\bo_{\kk}={\E}^R_{i,r-1}{\E}_{i,r-1}\bo_{\kk} $. Then, by using condition (2)(a), it is standard to see that ${\TTt}_{i,r}\bo_{\kk}$ and ${\SSs}_{i,r}\bo_{\kk}$ are categorical mutually orthogonal, i.e. ${\TTt}_{i,r}{\SSs}_{i,r}\bo_{\kk} \cong 0 \cong {\SSs}_{i,r}{\TTt}_{i,r}\bo_{\kk}$. Moreover, the exact triangles in condition (3) also imply ${\TTt}_{i,r}\bo_{\kk}$ and ${\SSs}_{i,r}\bo_{\kk}$ are idempotents, i.e. ${\TTt}^2_{i,r}\bo_{\kk} \cong {\TTt}_{i,r}\bo_{\kk}$ and ${\SSs}^2_{i,r}\bo_{\kk} \cong {\SSs}_{i,r}\bo_{\kk}$. Such exact triangles are called \textit{idempotent triangles} in \cite{Ho}.
\end{remark}

\begin{remark}
The isomorphisms in Condition (4)(a) really depend on the weight $\kk$, and this is also the shift that comes from the shifted 0-affine algebra in \cite{Hsu1}.
\end{remark}

\subsection{Semiorthogonal decompositions}

Before we state our second result, we need to introduce more notations. Any sequence of non-increasing positive integers $\blam=(\lambda_{1},...,\lambda_{n})$ can be represented as a Young diagram with $n$ rows, aligned on the left, such that the $i$th row has exactly $\lambda_{i}$ cells. For non-negative integers $a, \ b \geq 0$, we denote by $P(a,b)$ the set of Young diagrams $\blam$ such that $\lambda_{1} \leq a$ and $\lambda_{b+1}=0$.

\begin{theorem} \label{mainresult_n}
Given a categorical $\fU^+_{n}$-action $\Kk$. Considering the functors 
\begin{equation*}
{\E}_{1,\blam(1)}...{\E}_{n-1,\blam(n-1)}\bo_{\eta}  
\end{equation*}  where $\eta=(0,...,0,N)$ is the highest weight and $\blam(i) \in P(k_{i+1},\sum_{j=1}^ik_j)$ for all $i$. Then, they satisfy the following properties
\begin{enumerate}
\item  They are all fully-faithful, i.e.
\begin{equation*}
\Hom({\E}_{1,\blam(1)}...{\E}_{n-1,\blam(n-1)}\bo_{\eta},{\E}_{1,\blam(1)}...{\E}_{n-1,\blam(n-1)}\bo_{\eta}) \cong \Hom(\bo_{\eta},\bo_{\eta}),
\end{equation*}
\item They satisfy the semiorthogonal property: \begin{equation*}
\Hom({\E}_{1,\blam(1)}...{\E}_{n-1,\blam(n-1)}\bo_{\eta},{\E}_{1,\blam(1)'}...{\E}_{n-1,\blam(n-1)'}\bo_{\eta}) \cong 0 
\end{equation*}if $(\blam(1),...,\blam(n-1)) <_{pl} (\blam(1)',...,\blam(n-1)')$ where $<_{pl}$ denote the product lexicographical order.
\end{enumerate}    
\end{theorem}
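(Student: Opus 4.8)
The plan is to turn both assertions into one computation of a composite endofunctor applied to the highest weight object $\bo_\eta$, and then to run a straightening algorithm driven by the relations in conditions (3) and (4) together with a single boundary vanishing.

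First I would rewrite everything via adjunction. Abbreviate $\E_{i,\blam(i)}$ for the composite $\E_{i,\lambda_1}\cdots\E_{i,\lambda_m}$ over the parts of the partition $\blam(i)$, and write $F_{\blam}=\E_{1,\blam(1)}\cdots\E_{n-1,\blam(n-1)}$ for the tuple $\blam=(\blam(1),\dots,\blam(n-1))$. Both the full-faithfulness space in (1) and the semiorthogonality space in (2) have the shape $\Hom(F_{\blam}\bo_\eta, F_{\blam'}\bo_\eta)$. Using condition (1) and the formal fact that the right adjoint of a composite reverses the factorwise right adjoints, I reduce to computing $\Hom(\bo_\eta,\Phi\bo_\eta)$ for $\Phi=F_{\blam}^R F_{\blam'}=\E_{n-1,\blam(n-1)}^R\cdots\E_{1,\blam(1)}^R\,\E_{1,\blam(1)'}\cdots\E_{n-1,\blam(n-1)'}$, where each $\E_{i,\blam(i)}^R$ is the reversed composite of the $\E^R_{i,\lambda_j}$.

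The engine is the boundary vanishing at the highest weight: since $\eta=(0,\dots,0,N)$ and $\eta+\alpha_i\notin C(n,N)$ for every $i$ (it acquires a negative entry at position $i$), the target category $\Kk(\eta+\alpha_i)$ is zero, whence $\E^R_{i,s}\bo_\eta\cong 0$ for all $i,s$. Therefore any summand in which an $\E^R$ factor is pushed all the way to the right and acts first on $\bo_\eta$ is annihilated. I would then straighten $\Phi$ by commuting every $\E^R$ factor rightward past the $\E$ factors: the clean isomorphisms (4)(b),(4)(c) pass $\E^R_{j,s}$ across $\E_{i,r}$ for $i\neq j$ (merely reordering and shifting indices, creating no new summands); for same-vertex crossings $i=j$ I use the shifted isomorphism (4)(a) when the index gap lies in the admissible range $1\le r-s\le k_i+k_{i+1}-1$, and otherwise the idempotent triangle (3), which splits the computation into a commuted piece plus a genuine $\bo_\kk$ piece (conditions (2)(a)--(c) being used to keep the divided-power indexing in standard form). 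Every branch retaining an $\E^R$ on the right eventually meets $\bo_\eta$ and dies, so the only surviving contributions come from the $\bo_\kk$ pieces produced by (3). I would package this as an induction on $n$, peeling off the vertex-$1$ interactions (which involve only vertices $1$ and $2$) and reducing to the categorical $\fU^+_{n-1}$-action on the relevant weight categories, with base case $n=2$ the single-vertex computation on a Grassmannian-type weight category. Reading off the combinatorics then finishes both parts: for $\blam=\blam'$ the matched indices force each application of (3) to contribute exactly $\bo_\eta$ with no residual shift, so $\Phi\bo_\eta\cong\bo_\eta$ and $\Hom(\bo_\eta,\bo_\eta)$ results, the box constraints $\blam(i)\in P(k_{i+1},\sum_{j\le i}k_j)$ being precisely what keeps all intermediate gaps inside the admissible ranges; for $\blam<_{pl}\blam'$ I compare the tuples at the first vertex and part where they differ, and $\blam<_{pl}\blam'$ pushes the decisive crossing $\E^R_{i,s}\E_{i,r}$ outside the range where (3) can manufacture a $\bo$-term, so that branch terminates in a nonzero $\E^R\cdots\bo_\eta$ and vanishes, giving $\Hom(\bo_\eta,\Phi\bo_\eta)\cong 0$.

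The hard part will be the straightening step for general $n$: tracking the weight-dependent ranges $k_i+k_{i+1}$ as one moves between weights, checking that the homological shifts $[\pm 1]$ coming from (4)(a) and (3) cancel so that exactly $\Hom(\bo_\eta,\bo_\eta)$ (with no shift) remains on the diagonal, and verifying that the product lexicographic order is exactly the order for which the first-difference argument forces the decisive crossing out of the admissible range. Matching this bookkeeping to the combinatorics of the boxes $P(k_{i+1},\sum_{j\le i}k_j)$ is the technical core; everything else is formal manipulation of adjunctions and exact triangles.
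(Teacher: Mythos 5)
Your proposal follows essentially the same route as the paper: reduce by adjunction to computing $F_{\blam}^{R}F_{\blam'}\bo_{\eta}$, use the idempotent triangles in condition (3) at matched indices and the shifted commutation (4) at unmatched ones, and kill the extra terms via the boundary vanishing ${\E}^{R}_{i,s}\bo_{\eta}\cong 0$; the paper does exactly this for the $\SL_2$ case (Theorem \ref{mainresult_2}) and assembles the general $n$ case by the cross-vertex relations (4)(b),(c) as in \cite[Theorem 4.1]{Hsu2}. One bookkeeping caveat: your literal algorithm of pushing every ${\E}^{R}$ rightward onto $\bo_{\eta}$ stalls, since after applying (3) the new crossing ${\E}^{R}_{\lambda_1+1}{\E}_{\lambda_2}$ has gap $\lambda_2-\lambda_1-1\le -1$, outside the admissible range; the paper instead commutes the newly created ${\E}_{\lambda_1+1}$ \emph{leftward} past the outer ${\E}^{R}_{\lambda_j}$'s (gaps $\lambda_1-\lambda_j+c\ge 1$, always in range by the box constraints), after which the resulting block of $k$ consecutive ${\E}^{R}$'s applied after only $k-1$ ${\E}$'s forces the composite through a weight outside $C(n,N)$ and hence vanishes --- the same counting idea you describe, just run in the opposite direction.
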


In particular, it is directly to see the following corollary.

\begin{corollary}
Given a categorical $\fU^+_n$-action $\Kk$. We denote 
\begin{equation*}
    \mathrm{Im}{\E}_{1,\blam(1)}...{\E}_{n-1,\blam(n-1)}\bo_{\eta}
\end{equation*} to be the minimal full triangulated subcategory of $\Kk(\kk)$ generated by the class of objects which are essential images of ${\E}_{1,\blam(1)}...{\E}_{n-1,\blam(n-1)}\bo_{\eta}$ with $\blam(i) \in P(k_{i+1},\sum_{j=1}^ik_j)$ for all $i$. Then, we have the following semiorthogonal decomposition
\begin{equation*}
    \Kk(\kk) = \langle \Aa(\kk), \mathrm{Im}{\E}_{1,\blam(1)}...{\E}_{n-1,\blam(n-1)}\bo_{\eta} \rangle_{\blam(i) \in P(k_{i+1},\sum_{j=1}^ik_j)} 
\end{equation*} where $\Aa(\kk)$ is the right orthogonal complement.
\end{corollary}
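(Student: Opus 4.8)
The plan is to deduce the decomposition purely formally from the two properties recorded in Theorem \ref{mainresult_n}, by appealing to the standard theory of semiorthogonal decompositions generated by admissible subcategories (in the sense of Bondal--Kapranov). Throughout, for a tuple $\blam=(\blam(1),\dots,\blam(n-1))$ with $\blam(i)\in P(k_{i+1},\sum_{j=1}^{i}k_{j})$, write $\Ff_{\blam}$ for the composite functor ${\E}_{1,\blam(1)}\dots{\E}_{n-1,\blam(n-1)}\bo_{\eta}\colon \Kk(\eta)\to\Kk(\kk)$. The index set of such tuples is finite, since each box $P(k_{i+1},\sum_{j=1}^{i}k_{j})$ contains only finitely many Young diagrams, and the product-lexicographic order $<_{pl}$ restricts to a total order on it.

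First I would check that each $\mathrm{Im}\,\Ff_{\blam}$ is an admissible subcategory of $\Kk(\kk)$. Each ${\E}_{i,r}\bo_{\kk}$ is an exact $1$-morphism admitting both a left and a right adjoint by condition (1) of Definition \ref{catactkha_n}, so the composite $\Ff_{\blam}$ is exact and has adjoints on both sides. Combined with the full faithfulness of Theorem \ref{mainresult_n}(1), which identifies $\mathrm{Im}\,\Ff_{\blam}\simeq\Kk(\eta)$, this transports the adjoints of $\Ff_{\blam}$ into left and right adjoints of the inclusion $\mathrm{Im}\,\Ff_{\blam}\hookrightarrow\Kk(\kk)$; hence $\mathrm{Im}\,\Ff_{\blam}$ is admissible. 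Idempotent completeness of $\Kk(\kk)$, assumed in Definition \ref{catactkha_n}, ensures that the subcategory generated by the essential image is a genuine thick triangulated subcategory, so that $\mathrm{Im}\,\Ff_{\blam}$ as defined in the statement agrees with this essential image.

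Next I would organise the semiorthogonality. Listing the finitely many subcategories $\mathrm{Im}\,\Ff_{\blam}$ from left to right in decreasing $<_{pl}$-order, Theorem \ref{mainresult_n}(2) says precisely that $\Hom(\Ff_{\blam},\Ff_{\blam'})\cong 0$ whenever $\blam<_{pl}\blam'$, which is exactly the vanishing of $\Hom$ from each right-hand component into every left-hand component. Thus the ordered family is semiorthogonal. A finite semiorthogonal family of admissible subcategories generates an admissible subcategory $\Dd\coloneqq\langle \mathrm{Im}\,\Ff_{\blam}\rangle_{\blam}$ that itself carries the internal semiorthogonal decomposition into the individual $\mathrm{Im}\,\Ff_{\blam}$. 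Finally, since $\Dd$ is admissible, setting $\Aa(\kk)\coloneqq\Dd^{\perp}$ to be its right orthogonal complement yields the two-step decomposition $\Kk(\kk)=\langle \Aa(\kk),\Dd\rangle$, and splicing in the internal decomposition of $\Dd$ produces the stated result.

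The main obstacle is the admissibility step: although the adjoints of the generating functors ${\E}_{i,r}$ are supplied directly by the axioms, one must verify carefully that they assemble into adjoints for the inclusion of the essential image, so that the associated projection functors are defined on all of $\Kk(\kk)$; this is exactly where full faithfulness (Theorem \ref{mainresult_n}(1)) and idempotent completeness are indispensable, the latter guaranteeing that $\Aa(\kk)$ and the gluing of the pieces behave as in the classical theory. The remaining bookkeeping --- confirming that $<_{pl}$ is a total order on the finite index set and that the direction of the vanishing in Theorem \ref{mainresult_n}(2) matches the chosen left-to-right ordering convention --- is routine.
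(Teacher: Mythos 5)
Your argument is correct and follows exactly the route the paper intends: the paper offers no proof of this corollary beyond remarking that it follows "directly" from Theorem \ref{mainresult_n}, and your write-up simply supplies the standard Bondal--Kapranov admissibility bookkeeping (adjoints of the ${\E}_{i,r}$ plus full faithfulness give admissible images, Theorem \ref{mainresult_n}(2) gives the semiorthogonal ordering, and the right orthogonal complement furnishes $\Aa(\kk)$). No gaps; this matches the paper's approach.
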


The idea of the proof is taking the right (or left) adjoint of ${\E}_{1,\blam(1)}...{\E}_{n-1,\blam(n-1)}\bo_{\eta}$ and compute 
\begin{equation} \label{composition_n} 
({\E}_{1,\blam(1)}...{\E}_{n-1,\blam(n-1)}\bo_{\eta})^{R}{\E}_{1,\blam(1)'}...{\E}_{n-1,\blam(n-1)'}\bo_{\eta} \cong {\E}^R_{n-1,\blam(n-1)}...{\E}^R_{1,\blam(1)}{\E}_{1,\blam(1)'}...{\E}_{n-1,\blam(n-1)'}\bo_{\eta}.
\end{equation}

The first step to solve (\ref{composition_n}) is to compute the composition ${\E}^R_{1,\blam(1)}{\E}_{1,\blam(1)'}\bo_{(0,k_1+k_2,...,k_n)}$. Since the two functors in the composition have the same subscript, it suffices to prove Theorem \ref{mainresult_n} in  the $\SL_2$ case.

\subsubsection{The $\SL_2$ case}

We formulate Definition \ref{catactkha_n} in the $n=2$ (or $\SL_2$) version, where we drop the subscript $i$ for simplicity.

\begin{definition} \label{catactkha_2}
A \textit{categorical $\fU^+_{2}$-action} consists of a target 2-category $\Kk$, which is triangulated, $\CC$-linear and idempotent complete. The objects in $\Kk$ are
\begin{equation*}
\mathrm{Ob}(\Kk)=\{\Kk(k,N-k)\ |\ 0 \leq k \leq N \}
\end{equation*} where each $\Kk(k,N-k)$ is also a triangulated category, and each Hom space $\mathrm{Hom}(\Kk(k,N-k),\Kk(l,N-l))$ is also triangulated. On those objects we equip the following 1-morphisms: $\bo_{(k,N-k)}:\Kk(k,N-k) \rightarrow \Kk(k,N-k)$, ${\E}_r\bo_{(k,N-k)}:\Kk(k,N-k) \rightarrow \Kk(k+1,N-k-1)$ where $r \in \ZZ$. 

On this data, we impose the following condition 
\begin{enumerate}
    \item For each $r \in \ZZ$, ${\E}_r\bo_{(k,N-k)}$ admits left and right adjoint functors $({\E}_r\bo_{(k,N-k)})^L$, $({\E}_r\bo_{(k,N-k)})^R$, respectively.
    \item The functors ${\E}_r\bo_{(k,N-k)}$ satisfy 
    \begin{equation*}
    {\E}_{r}{\E}_{s}\bo_{(k,N-k)} \cong \begin{cases}
    {\E}_{s-1}{\E}_{r+1}\bo_{(k,N-k)}[1] & \text{if} \ r-s \geq 0 \\
    0 & \text{if} \ r-s=-1 \\
    {\E}_{s-1}{\E}_{r+1}\bo_{(k,N-k)}[-1] & \text{if} \ r-s \leq -2. 
    \end{cases}
    \end{equation*}
    \item There are exact triangles
    \begin{align*}
    &{\E}_{r}{\E}^R_{r}\bo_{(k,N-k)} \rightarrow \bo_{(k,N-k)} \rightarrow {\E}^R_{r-1}{\E}_{r-1}\bo_{(k,N-k)} \\
    &{\E}^L_{r-1}{\E}_{r-1}\bo_{(k,N-k)} \rightarrow \bo_{(k,N-k)} \rightarrow {\E}_{r}{\E}^L_{r}\bo_{(k,N-k)}
    \end{align*} for all $r \in \ZZ$.
    \item The shifted condition.
    \begin{align*}
    {\E}_{r}{\E}^R_{s}\bo_{(k,N-k)} &\cong {\E}^R_{s-1}{\E}_{r-1}\bo_{(k,N-k)}[-1], \ \Rm{if} \ 1 \leq r-s \leq N-1, \\
    {\E}_{r}{\E}^L_{s}\bo_{(k,N-k)} &\cong {\E}^L_{s-1}{\E}_{r-1}\bo_{(k,N-k)}[1], \ \Rm{if} \ -N+1 \leq r-s \leq -1.
    \end{align*}
\end{enumerate}
\end{definition}

Then, the main result (Theorem \ref{mainresult_n}) for the $\SL_2$ version is the following.

\begin{theorem} \label{mainresult_2}
Given a categorical $\fU^+_{2}$-action $\Kk$. Then the functors $\{{\E}_{\blam}\bo_{(0,N)}\}_{\blam \in P(N-k,k)}$ satisfy the following properties
\begin{enumerate}
\item  Each ${\E}_{\blam}\bo_{(0,N)}$ is fully-faithful, i.e. $\Hom({\E}_{\blam}\bo_{(0,N)},{\E}_{\blam}\bo_{(0,N)}) \cong \Hom(\bo_{(0,N)},\bo_{(0,N)})$,
\item $\Hom({\E}_{\blam}\bo_{(0,N)},{\E}_{\blam'}\bo_{(0,N)}) \cong 0$ 
if $\blam <_{l} \blam'$ (semiorthogonal property).
\end{enumerate}
\end{theorem}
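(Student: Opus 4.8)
The plan is to turn the $\Hom$-computation into a composite of functors by adjunction, and then to collapse that composite to either $\bo_{(0,N)}$ or $0$ using the three structural relations of Definition \ref{catactkha_2}. Writing $\E_{\blam}\bo_{(0,N)}=\E_{\lambda_1}\cdots\E_{\lambda_k}\bo_{(0,N)}$ and using that each $\E_r\bo$ admits a right adjoint (condition (1)), the right adjoint of $\E_{\blam}\bo_{(0,N)}$ is $\E^R_{\lambda_k}\cdots\E^R_{\lambda_1}$, so that
\[
\Hom(\E_{\blam}\bo_{(0,N)},\E_{\blam'}\bo_{(0,N)})\cong\Hom\big(\bo_{(0,N)},\,\E^R_{\lambda_k}\cdots\E^R_{\lambda_1}\E_{\lambda'_1}\cdots\E_{\lambda'_k}\bo_{(0,N)}\big).
\]
Both assertions therefore follow once I show that the endofunctor $\Phi_{\blam,\blam'}:=\E^R_{\lambda_k}\cdots\E^R_{\lambda_1}\E_{\lambda'_1}\cdots\E_{\lambda'_k}\bo_{(0,N)}$ is isomorphic to $\bo_{(0,N)}$ when $\blam=\blam'$ (yielding (1)) and to $0$ when $\blam<_{l}\blam'$ (yielding (2)).

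From the axioms I first isolate the elementary moves. Condition (4) rewrites as a \emph{straightening isomorphism} $\E^R_a\E_b\bo_{(m,N-m)}\cong\E_{b+1}\E^R_{a+1}\bo_{(m,N-m)}[1]$ valid for $1\le b-a\le N-1$, which slides a lowering operator to the right, raising both indices. Condition (3), specialized so the right-hand term reads $\E^R_a\E_a$, gives the idempotent triangle $\E_{a+1}\E^R_{a+1}\bo_{(m,N-m)}\to\bo_{(m,N-m)}\to\E^R_a\E_a\bo_{(m,N-m)}$, which handles coincident indices. Finally, at the highest weight there is the annihilation $\E^R_a\bo_{(0,N)}\cong 0$, since there is no weight below $(0,N)$; combined with the idempotent triangle this gives $\E^R_a\E_a\bo_{(0,N)}\cong\bo_{(0,N)}$, and combined with the straightening isomorphism it gives $\E^R_a\E_b\bo_{(0,N)}\cong 0$ for $1\le b-a\le N-1$. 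These are the abstract shadows of the cohomology vanishing for line bundles underlying Kapranov's exceptional collection on $\Gr(k,N)$.

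With these in hand I would induct on $k$, peeling off the innermost pair $\E^R_{\lambda_1}\E_{\lambda'_1}$ sitting at weight $(k-1,N-k+1)$; note that $\blam\le_{l}\blam'$ forces $\lambda_1\le\lambda'_1$. If $\lambda_1=\lambda'_1$, I apply the idempotent triangle to obtain an exact triangle whose middle term is $\Phi_{(\lambda_2,\dots,\lambda_k),(\lambda'_2,\dots,\lambda'_k)}$, controlled by the inductive hypothesis ($\cong\bo_{(0,N)}$ if the tails agree, $\cong 0$ if the tail of $\blam$ is lex-smaller), and whose remaining term is a \emph{mismatched} composite containing an index-raised $\E^R_{\lambda_1+1}$. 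If instead $\lambda_1<\lambda'_1$, one straightening step turns $\Phi_{\blam,\blam'}$ into a homological shift of a similar mismatched composite. In both cases the theorem reduces to a single vanishing statement: that these mismatched composites applied to $\bo_{(0,N)}$ are zero. Fully-faithfulness and semiorthogonality then drop out of the one induction.

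The main obstacle is exactly this vanishing, together with the bookkeeping of indices and shifts that it requires. Each straightening step is legal only in the band $1\le b-a\le N-1$, and it is precisely the constraint $\blam,\blam'\in P(N-k,k)$ (all parts $\le N-k$, at most $k$ of them) that keeps every index difference encountered inside this band; checking this at each step is delicate. The genuinely hard point is that the error term produced by the idempotent triangle contains a lowering operator whose index has been raised past the parts it now faces, so the straightening isomorphism no longer applies in the naive rightward direction. I expect to resolve this by a secondary induction that slides the adjacent \emph{raising} operator leftward through the $\E^R$-block (again via condition (4)) until the configuration either telescopes to $0$ or feeds back into the outer induction at smaller $k$. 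Showing that all such error terms vanish — equivalently, that no spurious morphisms survive off the diagonal — is where the bulk of the work lies, and it is the categorical counterpart of the Borel--Weil--Bott vanishing governing the geometric model.
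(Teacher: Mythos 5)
Your proposal is correct and follows essentially the same route as the paper: reduce by adjunction to showing $\E^R_{\lambda_k}\cdots\E^R_{\lambda_1}\E_{\lambda'_1}\cdots\E_{\lambda'_k}\bo_{(0,N)}$ is $\bo_{(0,N)}$ or $0$, peel the innermost pair with the idempotent triangle of condition (3) when $\lambda_1=\lambda'_1$ or straighten with condition (4) when $\lambda_1<\lambda'_1$, and kill the resulting mismatched term. The ``secondary induction'' you anticipate for that vanishing is exactly what the paper does: repeated applications of condition (4) slide the raised $\E$ leftward through the $\E^R$-block (the constraint $\blam,\blam'\in P(N-k,k)$ keeping each index difference in the band $[1,N-1]$), leaving one surplus $\E^R$ so the composite factors through the nonexistent weight below $(0,N)$ and vanishes.
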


\begin{proof}
We prove the fully-faithfulness first. Given a Young diagram $\blam=(\lambda_1,...,\lambda_k) \in P(N-k,k)$. We apply the right adjoint to get 
\begin{equation*}
 \Hom({\E}_{\blam}\bo_{(0,N)},{\E}_{\blam}\bo_{(0,N)}) \cong \Hom(\bo_{(0,N)},{\E}^{R}_{\blam}{\E}_{\blam}\bo_{(0,N)}). 
\end{equation*} We will show that ${\E}^{R}_{\blam}{\E}_{\blam}\bo_{(0,N)} \cong \bo_{(0,N)}$. Explicitly, we have 
\begin{equation} \label{mainobj}
{\E}^{R}_{\blam}{\E}_{\blam}\bo_{(0,N)}={\E}^R_{\lambda_k}...{\E}^R_{\lambda_1}{\E}_{\lambda_1}...{\E}_{\lambda_k}\bo_{(0,N)}.  
\end{equation} 

Assume $k=1$ first, then ${\E}^{R}_{\blam}{\E}_{\blam}\bo_{(0,N)}={\E}^R_{\lambda_1}{\E}_{\lambda_1}\bo_{(0,N)}$ and there is the following exact triangle from condition (3),
\begin{equation*}
  {\E}_{\lambda_1+1}{\E}^R_{\lambda_1+1}\bo_{(0,N)} \rightarrow  \bo_{(0,N)}  \rightarrow {\E}^R_{\lambda_1}{\E}_{\lambda_1}\bo_{(0,N)}.
\end{equation*} Since $ {\E}_{\lambda_1+1}{\E}^R_{\lambda_1+1}\bo_{(0,N)} \cong 0$, we conclude that ${\E}^R_{\lambda_1}{\E}_{\lambda_1}\bo_{(0,N)} \cong \bo_{(0,N)}$, which is our desired result.

Now, we assume $k \geq 2$. Then, by condition (3) again, we have the following exact triangle
\begin{equation*}
  {\E}_{\lambda_1+1}{\E}^R_{\lambda_1+1}\bo_{(k-1,N-k+1)} \rightarrow  \bo_{(k-1,N-k+1)}  \rightarrow {\E}^R_{\lambda_1}{\E}_{\lambda_1}\bo_{(k-1,N-k+1)}.
\end{equation*} Thus the composition in (\ref{mainobj}) is in the following exact triangle
\begin{equation} \label{1ststepet}
  {\E}^R_{\lambda_k}...{\E}^R_{\lambda_2}{\E}_{\lambda_1+1}{\E}^R_{\lambda_1+1}{\E}_{\lambda_2}...{\E}_{\lambda_k}\bo_{(0,N)} \rightarrow {\E}^R_{\lambda_k}...{\E}^R_{\lambda_2}{\E}_{\lambda_2}...{\E}_{\lambda_k}\bo_{(0,N)} \rightarrow {\E}^{R}_{\blam}{\E}_{\blam}\bo_{(0,N)}.
\end{equation}

Note that since $\blam \in P(k,N-k)$, we have $0 \leq \lambda_k \leq ... \leq \lambda_1 \leq N-k$. This implies that $0 \leq \lambda_1-\lambda_i \leq N-k$ for all $i \geq 2$. By using condition (4) repeatedly, we get
\begin{align*}
 &{\E}^R_{\lambda_k}...{\E}^R_{\lambda_2}{\E}_{\lambda_1+1}\bo_{(k-2,N-k+2)} \\
 &\cong {\E}^R_{\lambda_k}...{\E}_{\lambda_1+2}{\E}^R_{\lambda_2+1}\bo_{(k-2,N-k+2)}[1] \ (\text{since} \ 1 \leq \lambda_1-\lambda_2+1 \leq N-k+1 \leq N-1) \\
 &\cong ... \\
 &\cong {\E}^R_{\lambda_k}{\E}_{\lambda_1+k-1}...{\E}^R_{\lambda_2+1}\bo_{(k-2,N-k+2)}[k-2]\ (\text{since} \ k-2 \leq \lambda_1-\lambda_{k-1}+k-2 \leq N-k+k-2 \leq N-1) \\
 & \cong {\E}_{\lambda_1+k}{\E}^R_{\lambda_k+1}...{\E}^R_{\lambda_2+1}\bo_{(k-2,N-k+2)}[k-1] \ (\text{since} \ k-1 \leq \lambda_1-\lambda_k+k-1 \leq N-k+k-1 \leq N-1).
\end{align*} Thus,
\begin{equation*}
  {\E}^R_{\lambda_k}...{\E}^R_{\lambda_2}{\E}_{\lambda_1+1}{\E}^R_{\lambda_1+1}{\E}_{\lambda_2}...{\E}_{\lambda_k}\bo_{(0,N)} \cong  {\E}_{\lambda_1+k}{\E}^R_{\lambda_k+1}...{\E}^R_{\lambda_2+1}{\E}^R_{\lambda_1+1}{\E}_{\lambda_2}...{\E}_{\lambda_k}\bo_{(0,N)} [k-1] \cong 0
\end{equation*} and it implies 
\begin{equation*}
    {\E}^{R}_{\blam}{\E}_{\blam}\bo_{(0,N)} \cong {\E}^R_{\lambda_k}...{\E}^R_{\lambda_2}{\E}_{\lambda_2}...{\E}_{\lambda_k}\bo_{(0,N)}.
\end{equation*} from the exact triangle (\ref{1ststepet})

Continuing this process with the same argument, we get
\begin{equation*}
 {\E}^{R}_{\blam}{\E}_{\blam}\bo_{(0,N)} \cong {\E}^R_{\lambda_k}...{\E}^R_{\lambda_2}{\E}_{\lambda_2}...{\E}_{\lambda_k}\bo_{(0,N)} \cong ... \cong   {\E}^R_{\lambda_k}{\E}_{\lambda_k}\bo_{(0,N)} \cong \bo_{(0,N)}.
\end{equation*} Hence, we prove the fully-faithfulness.

Now, we prove the semiorthogonal property. Given $\blam,\ \blam' \in P(N-k,k)$ such that $\blam <_l \blam'$. Then, by taking the right adjoint, we have 
\begin{equation*}
 \Hom({\E}_{\blam}\bo_{(0,N)},{\E}_{\blam'}\bo_{(0,N)}) \cong \Hom(\bo_{(0,N)},{\E}^R_{\blam}{\E}_{\blam'}\bo_{(0,N)}).
\end{equation*} We will show that ${\E}^R_{\blam}{\E}_{\blam'}\bo_{(0,N)} \cong 0$. Explicitly, we have 
\begin{equation} \label{2ndmainobj}
{\E}^{R}_{\blam}{\E}_{\blam'}\bo_{(0,N)}={\E}^R_{\lambda_k}...{\E}^R_{\lambda_1}{\E}_{\lambda'_1}...{\E}_{\lambda'_k}\bo_{(0,N)}.  
\end{equation} 

Since $\blam <_l \blam'$, we define $i \coloneqq \text{min}\{1 \leq j \leq k \ | \ \lambda_j<\lambda'_j \}$ such that $\lambda_a=\lambda'_a$ for all $1 \leq a \leq i-1$. Then, like the argument in the proof of the fully-faithfulness property, (\ref{2ndmainobj}) becomes
\begin{equation*}
 {\E}^R_{\lambda_k}...{\E}^R_{\lambda_1}{\E}_{\lambda'_1}...{\E}_{\lambda'_k}\bo_{(0,N)} \cong {\E}^R_{\lambda_k}...{\E}^R_{\lambda_2}{\E}_{\lambda'_2}...{\E}_{\lambda'_k}\bo_{(0,N)} 
 \cong... \cong {\E}^R_{\lambda_k}...{\E}^R_{\lambda_i}{\E}_{\lambda'_i}...{\E}_{\lambda'_k}\bo_{(0,N)}.   
\end{equation*}

Now we have $0 \leq \lambda_i<\lambda'_i \leq N-k$, thus $1 \leq \lambda'_i-\lambda_i \leq N-k \leq N-1$. By condition (4), we obtain
\begin{equation} \label{2ndmainobj'}
 {\E}^R_{\lambda_k}...{\E}^R_{\lambda_i}{\E}_{\lambda'_i}...{\E}_{\lambda'_k}\bo_{(0,N)} \cong {\E}^R_{\lambda_k}...{\E}_{\lambda'_i+1}{\E}^R_{\lambda_i+1}...{\E}_{\lambda'_k}\bo_{(0,N)} [1].  
\end{equation} Next, since $0 \leq \lambda_{i+1} \leq \lambda_i<\lambda'_i \leq N-k$, we also have $2 \leq \lambda'_i - \lambda_{i+1}+1 \leq N-k+1 \leq N-1$ (since $2 \leq i+1 \leq k$). By condition (4) again, 
\begin{equation*}
{\E}^R_{\lambda_k}...{\E}^R_{\lambda_{i+1}}{\E}_{\lambda'_i+1}{\E}^R_{\lambda_i+1}...{\E}_{\lambda'_k}\bo_{(0,N)} [1] \cong  {\E}^R_{\lambda_k}...{\E}_{\lambda'_i+2}{\E}^R_{\lambda_{i+1}+1}{\E}^R_{\lambda_i+1}...{\E}_{\lambda'_k}\bo_{(0,N)} [2].   
\end{equation*} Continuing this process, (\ref{2ndmainobj'}) becomes 
\begin{align*}
 {\E}^R_{\lambda_k}...{\E}^R_{\lambda_i}{\E}_{\lambda'_i}...{\E}_{\lambda'_k}\bo_{(0,N)} &\cong {\E}^R_{\lambda_k}...{\E}_{\lambda'_i+1}{\E}^R_{\lambda_i+1}...{\E}_{\lambda'_k}\bo_{(0,N)} [1] \\ 
 &\cong ... \\
 &\cong  {\E}_{\lambda'_i+k-i+1}{\E}^R_{\lambda_k+1}...{\E}^R_{\lambda_i+1}{\E}_{\lambda'_{i+1}}...{\E}_{\lambda'_k}\bo_{(0,N)} [k-i+1] \cong 0
\end{align*} which shows the semiorthogonal property. 
\end{proof}

Finally, since we prove the $\SL_2$ case, the proof of Theorem \ref{mainresult_n} is the same as the argument in the proof of \cite[Theorem 4.1]{Hsu2}.

\subsection{Main example}

In this subsection, we provide an example where there is a categorical $\fU^+_{n}$-action.

For each $\kk \in C(n,N)$, recall the $\kk$-partial flag variety $\Fl_{\kk}(\CC^N)$ in (\ref{eq fl}). On $\Fl_{\kk}(\CC^N)$, we denote $\V_i$ to be the tautological bundle of rank $\sum_{j=1}^ik_j$ for all $1 \leq i \leq n-1$. There is a natural correspondence relates $\Fl_{\kk}(\CC^N)$ and $\Fl_{\kk-\alpha_i}(\CC^N)$
\begin{equation*}
\xymatrix{ 
&&W^{1}_{i}(\kk)
\ar[ld]_{p_1} \ar[rd]^{p_2}   \\
& \Fl_{\kk}(\CC^N) && \Fl_{\kk-\alpha_{i}}(\CC^N)
}
\end{equation*}  where 
$W^{1}_{i}(\kk) \coloneqq \{(V_{\bullet},V_{\bullet}') \in  \Fl_{\kk}(\CC^N) \times \Fl_{\kk-\alpha_i}(\CC^N) \ | \ V_{j}=V'_{j} \ \Rm{for} \ j \neq i,  \ V_{i} \subset V'_{i}\}$, and $p_{1}$, $p_{2}$ are the natural projections.

We define the functors ${\E}_{i,r}\bo_{\kk}$ by the following
\begin{equation} \label{eir}
{\E}_{i,r}\bo_{\kk} \coloneqq p_{2*}(p_{1}^{*} \otimes (\V'_{i}/\V_{i})^{\otimes r}):\D^b\Coh(\Fl_{\kk}(\CC^N)) \rightarrow \D^b\Coh(\Fl_{\kk-\alpha_{i}}(\CC^N))
\end{equation} for all $r \in \ZZ$.

Then, we have the following result, which follows from the works in \cite{Hsu1} and \cite{Hsu3}.

\begin{theorem} \label{main result 3}
Let $\Kk$ be the triangulated 2-category whose nonzero objects are $\Kk(\kk)=\D^b\Coh(\Fl_{\kk}(\CC^N))$ where $\kk \in C(n,N)$. The 1-morphisms are ${\E}_{i,r}\bo_{\kk}$ that defined in (\ref{eir}) and their compositions. The two morphisms are maps between kernels. Then, the above data gives a categorical $\fU^+_{n}$-action in Definition \ref{composition_n}.
\end{theorem}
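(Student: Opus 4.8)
The plan is to recognize the functors $\E_{i,r}\bo_{\kk}$ of (\ref{eir}) as precisely the positive-part functors underlying the categorical $\dot{\Uu}$-action constructed in \cite{Hsu1}, and then to verify the four conditions of Definition \ref{catactkha_n} one at a time, in each case invoking the corresponding isomorphism or exact triangle of functors already established in \cite{Hsu1} and \cite{Hsu3}. The starting observation is geometric: in the correspondence $W^1_i(\kk)$ the modification increases $\dim V_i$ by exactly one, so $\V'_i/\V_i$ is a \emph{line} bundle and $(\V'_i/\V_i)^{\otimes r}$ is defined for every $r \in \ZZ$; moreover $p_1$ and $p_2$ are projective bundles, of relative dimension $k_{i+1}-1$ and $k_i$ respectively. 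Hence each $\E_{i,r}\bo_{\kk}$ is a Fourier--Mukai functor along a smooth projective morphism, and the $\SL_2$-string length at vertex $i$ is $k_i+k_{i+1}$, which is exactly the number governing the shift ranges in condition (4)(a).

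For condition (1), since $p_1,p_2$ are projective, Grothendieck--Serre duality immediately supplies left and right adjoints, whose kernels are the transpose of $W^1_i(\kk)$ twisted by the relative dualizing sheaf and shifted; this is standard and yields $\E^L_{i,r}\bo_{\kk-\alpha_i}$ and $\E^R_{i,r}\bo_{\kk-\alpha_i}$. Condition (2) is the categorification of the $e$-relations (\ref{er}), and I would compute each composition as a pushforward along the appropriate iterated correspondence: $\E_{i,r}\E_{i,s}\bo_{\kk}$ lives over the two-step modification at vertex $i$, namely a partial flag $V_i \subset V'_i \subset V''_i$ inside $V_{i+1}$, and the three cases in (2)(a) follow from the projective-bundle (Beilinson) resolution of the diagonal, exactly as in the geometric categorical $\SL_2$ picture. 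The triangle in (2)(b) comes from modifications at the adjacent vertices $i,i+1$ together with the short exact sequence relating the two tautological quotient lines, while (2)(c) is the base-change / K\"unneth commutativity of modifications at non-adjacent vertices. All three are recorded in \cite{Hsu1}.

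Conditions (3) and (4) carry the real content. The idempotent triangles of (3) arise from the semiorthogonal (projective-bundle) decomposition of the derived category of the self-correspondence attached to $\E_{i,r}\E^R_{i,r}\bo_{\kk}$, and are exactly the idempotent triangles proved in \cite{Hsu1}. The shifted isomorphisms of (4), in particular (4)(a), are the most delicate point: the cohomological shift and the weight-dependent range $1 \le r-s \le k_i+k_{i+1}-1$ reflect the fiber dimension of the composed correspondence over $\Fl_{\kk}(\CC^N)$, and establishing them is the heart of the computation carried out in \cite{Hsu3}. The main obstacle I anticipate is reconciliation rather than geometry: the relations of \cite{Hsu1} are phrased for the \emph{shifted}, idempotent-modified, finitely generated algebra $\dot{\Uu}$, whereas Definition \ref{catactkha_n} asks for the full $\fU^+_n$ with generators $e_{i,r}$ for all $r \in \ZZ$. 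I must check that the functorial relations extend verbatim to every $r \in \ZZ$ --- which is precisely what the invertibility of $(\V'_i/\V_i)^{\otimes r}$ guarantees --- and that the weight-dependent shifts in (4)(a) match the stated ranges under the identification $\E_{i,r} \leftrightarrow e_{i,r}$, keeping in mind the convention mismatch between \cite{Hsu1} and \cite{AM} flagged in the introduction. Once this bookkeeping is settled, no relation beyond (1)--(4) is required, and the data on $\Kk$ satisfies Definition \ref{catactkha_n}.
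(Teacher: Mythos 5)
Your proposal is correct and follows essentially the same route as the paper: identify $\E_{i,r}\bo_{\kk}$ with the functors $\F_{i,r}\bo_{\kk}$ of \cite{Hsu1}, extend from the finitely many loop generators of the shifted algebra $\dot{\Uu}$ to all $r\in\ZZ$ by conjugating with the determinant line-bundle twist $\spi^{\pm}_i$, and then verify conditions (1)--(4) of Definition \ref{catactkha_n} by invoking the adjunctions, quadratic relations, and idempotent triangles already established in \cite{Hsu1} and \cite{Hsu3}. The only minor discrepancy is one of attribution: the paper derives the shifted isomorphisms of condition (4) by explicit manipulation of the relations in \cite[Definition 3.1]{Hsu1} (with \cite{Hsu3} supplying the exact triangle for condition (3)), rather than locating that computation in \cite{Hsu3} itself.
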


\begin{proof}

Observe that our definition of the functors ${\E}_{i,r}\bo_{\kk}$ in (\ref{eir}) is precisely the definition of the functors ${\F}_{i,r}\bo_{\kk}$ in \cite[Definition 5.5]{Hsu1}. Thus, we will use the main result in \cite{Hsu1}, i.e. there is a categorical action of the shifted 0-affine algebra on the derived categories of coherent sheaves on partial flag varieties, to obtain conditions (1) to (4) in Definition \ref{catactkha_n}. 

Note that, in the presentation of the shifted 0-affine algebra
\cite[Definition 2.6]{Hsu1}, we only require finite generators and relations. In particular, the loop generators  $f_{i,r}1_{\kk}$ with $s$ within a certain range ($0 \leq r \leq k_{i+1}$). So, in the categorical action of shifted 0-affine algebra \cite[Definition 3.1]{Hsu1}, we only have functors ${\F}_{i,r}\bo_{\kk}$ for $0 \leq r \leq k_{i+1}$. 

To prove the result, we need to introduce the following functors to help us
\begin{equation*}
{\spi}^-_i\bo_{\kk}\coloneqq \otimes \det(\V_i/\V_{i-1})^{-1}[1-k_{i}]:\D^b\Coh(\Fl_{\kk}(\CC^N)) \rightarrow \D^b\Coh(\Fl_{\kk}(\CC^N))
\end{equation*} for all $1 \leq i \leq n-1$. Then, we have
\begin{equation} \label{twist}
{\F}_{i,r}\bo_{\kk} \cong ({\spi}^-_i)^{-r}{\F}_{i,0}({\spi}^-_i)^{r}\bo_{\kk}[-r]    
\end{equation} for all $r \in \ZZ$. 

Since there are also functors with the same notations ${\E}_{i,r}\bo_{\kk}$ in the opposite direction from \cite{Hsu1}, to distinguish them, we will add an extra prime in all the notations, i.e., we prove the this result in terms of the functors ${\F}'_{i,r}\bo_{\kk}$.

For condition (1), since ${\F}'_{i,0}\bo_{\kk}$ admits left and right adjoints from \cite[Theorem 5.6]{Hsu1} and ${\spi}^+_i\bo_{\kk}$ is invertible, we have 
\begin{equation*}
   {\F}'^R_{i,r}\bo_{\kk} \cong \biggl( ({\spi}^+_i)^{-r}{\F}'_{i,0}({\spi}^+_i)^{r}\bo_{\kk}[r] \biggr)^{R} \cong ({\spi}^+_i)^{-r}{\F}'^R_{i,0}({\spi}^+_i)^{r}\bo_{\kk-\alpha_i}[-r], 
\end{equation*} and similarly for the left adjoint.

For condition (2), it follows from the same argument in the proof of \cite[Lemma 5.9 (1)]{Hsu1} and \cite[Lemma 5.12]{Hsu1}.

For condition (3), it suffices to prove the exact triangle for the right adjoint case, and the left adjoint case will follows by taking the left adjoint. From \cite[Lemma 5.10]{Hsu3}, we have the following exact triangle
\begin{equation} \label{etr=0}
    {\F}'_{i,0}{\F}'^{R}_{i,0}\bo_{\kk} \rightarrow \bo_{\kk} \rightarrow   {\F}'^R_{i,-1}{\F}'_{i,-1}\bo_{\kk}.  
\end{equation} Using the twist by ${\spi}^+_i\bo_{\kk}$, we have 
\begin{align*}
   ({\spi}^+_i)^{-r}{\F}'_{i,0}{\F}'^{R}_{i,0}({\spi}^+_i)^{r}\bo_{\kk} &\cong   ({\spi}^+_i)^{-r}{\F}'_{i,0}({\spi}^+_i)^{r}[r] ({\spi}^+_i)^{-r}{\F}'^{R}_{i,0}({\spi}^+_i)^{r}[-r]\bo_{\kk} \cong {\F}'_{i,r}{\F}'^{R}_{i,r}\bo_{\kk}, \\
   ({\spi}^+_i)^{-r}{\F}'^R_{i,-1}{\F}'_{i,-1}({\spi}^+_i)^{r}\bo_{\kk} &\cong   ({\spi}^+_i)^{-r}{\F}'^R_{i,-1}({\spi}^+_i)^{r}[-r] ({\spi}^+_i)^{-r}{\F}'_{i,-1}({\spi}^+_i)^{r}[r]\bo_{\kk} \cong {\F}'^R_{i,r-1}{\F}'_{i,r-1}\bo_{\kk}.
\end{align*} Thus, by conjugation with the exact triangle (\ref{etr=0}) with $({\spi}^+_i)^{-r}\bo_{\kk}$, we obtain 
\begin{equation*}
    {\F}'_{i,r}{\F}'^{R}_{i,r}\bo_{\kk} \rightarrow \bo_{\kk} \rightarrow   {\F}'^R_{i,r-1}{\F}'_{i,r-1}\bo_{\kk}
\end{equation*} which is the desired result. 

Finally, we prove condition (4). Like condition (3),  it suffices to prove the right adjoint case and the rest follows by taking left adjoint. We need to use the functors ${\E}'_{i,r}\bo_{\kk}$ in the opposite direction from \cite{Hsu1}. Since we will keep using conditions from the categorical action of shifted 0-affine algebra (\cite[Definition 3.1]{Hsu1}), when we say conditions we always means conditions from \cite[Definition 3.1]{Hsu1}.

For (a), we compute 
\begin{align*}
 {\F}'_{i,r}{\F}'^{R}_{i,s}\bo_{\kk} &={\F}'_{i,r}\bo_{\kk+\alpha_i}({\F}'_{i,s}\bo_{\kk+\alpha_i})^R \cong  {\F}'_{i,r}({\spi}_i^-)^{-s}{\E}'_{i,-k_i}({\spi}_i^-)^{s-1}\bo_{\kk}[s] \ (\text{by condition (3)(c)}) \\
 & \cong {\F}'_{i,r}{\E}'_{i,-k_i-s}({\spi}_i^-)^{-1}\bo_{\kk} \ (\text{by condition (7)(a)}) \\
\end{align*} Since the assumption $1 \leq r-s \leq k_i+k_{i+1}-1$ is equivalent to $1-k_i \leq r-s-k_i \leq k_{i+1}-1$, using condition (10)(c) in \cite[Definition 3.1]{Hsu1} we obtain 
\begin{align*}
  {\F}'_{i,r}{\E}'_{i,-k_i-s}({\spi}_i^-)^{-1}\bo_{\kk} &\cong   {\E}'_{i,-k_i-s}{\F}'_{i,r}({\spi}_i^-)^{-1}\bo_{\kk} \cong {\E}'_{i,-k_i-s}({\spi}_i^-)^{-1}{\F}'_{i,r-1}\bo_{\kk}[-1] \\
  & \cong ({\spi}_i^-)^{-s+1}{\E}'_{i,-k_i-1}({\spi}_i^-)^{s-2}
  \bo_{\kk-\alpha_i}[s-1]{\F}'_{i,r-1}\bo_{\kk}[-1] \ (\text{by condition (7)(a)}) \\
  & \cong {\F}'^{R}_{i,s-1}{\F}'_{i,r-1}\bo_{\kk}[-1] \ (\text{by condition (3)(c)})
\end{align*} which proves (a).

For (b), since the arguments are similar, we only prove one of them, say the non-trivial one. We compute 
\begin{align*}
    {\F}'_{i,r}{\F}'^{R}_{i+1,s}\bo_{\kk} &\cong {\F}'_{i,r}({\spi}_{i+1}^-)^{-s}{\E}'_{i+1,-k_{i+1}}({\spi}_{i+1}^-)^{s-1}\bo_{\kk}[s] \ (\text{by condition (3)(c)}) \\
    & \cong ({\spi}_{i+1}^-)^{-s}{\F}'_{i,r+s}{\E}'_{i+1,-k_{i+1}}({\spi}_{i+1}^-)^{s-1}\bo_{\kk}[2s] \ (\text{by condition (8)(b)}) \\
    & \cong ({\spi}_{i+1}^-)^{-s}{\E}'_{i+1,-k_{i+1}}{\F}'_{i,r+s}({\spi}_{i+1}^-)^{s-1}\bo_{\kk}[2s] \ (\text{by condition (9)}) \\
     & \cong ({\spi}_{i+1}^-)^{-s}{\E}'_{i+1,-k_{i+1}}({\spi}_{i+1}^-)^{s-1}\bo_{\kk-\alpha_i}{\F}'_{i,r+1}\bo_{\kk}[s+1] \ (\text{by condition (8)(b)}) \\
     & \cong ({\spi}_{i+1}^-)^{-s-1}{\E}'_{i+1,-k_{i+1}+1}({\spi}_{i+1}^-)^{s}\bo_{\kk-\alpha_i}{\F}'_{i,r+1}\bo_{\kk}[s+2] \ (\text{by condition (7)(a)}) \\
     & \cong ({\F}'_{i+1,s+1}\bo_{\kk-\alpha_i+\alpha_{i+1}})^R{\F}'_{i,r+1}\bo_{\kk}[1] \ (\text{by condition (3)(c)}) \\
     &\cong {\F}'^{R}_{i+1,s+1}{\F}'_{i,r+1}\bo_{\kk}[1]
\end{align*} which proves (b).

For (c), the proof also follows from conditions in \cite[Definition 3.1]{Hsu1} and we leave it to the readers.




\end{proof}

Finally, by Theorem \ref{catactkha_n}, the categorical $\fU^+_{n}$-action on $\bigoplus_{\kk} \D^b\Coh(\Fl_{\kk}(\CC^N))$ gives semiorthogonal decomposition on $\D^b\Coh(\Fl_{\kk}(\CC^N))$ for each $\kk$. As we mention in the introduction in \cite{Hsu2}, this semiorthogonal decomposition recovers the exceptional collection given by Kapranov \cite{Ka1}, \cite{Ka2}.

\end{document}